\newtheorem{theorem}{Theorem}%[section]
\newtheorem{remark}[theorem]{Remark}
\newtheorem{corollary}[theorem]{Corollary}
\theoremstyle{theorem}
\newtheorem{example}[theorem]{Example}
\newtheorem{lemma}[theorem]{Lemma}
\theoremstyle{definition} \theoremstyle{definition}
\newtheorem{defn}[theorem]{Definition}
\numberwithin{equation}{section} 
\begin{document}
\title[Applications of third order differential subordination and superordination 
...]{Applications of third order differential subordination and superordination involving generalized Struve function}

\author[P. Gochhayat]{P. Gochhayat}
\address{Department of Mathematics \\
	Sambalpur University\\
	Jyoti Vihar 768019\\
	Burla, Sambalpur, Odisha\\ India}
\email{pgochhayat@gmail.com}
\author[A. Prajapati]{A. Prajapati}
\address{Department of Mathematics \\
	Sambalpur University\\
	Jyoti Vihar 768019\\
	Burla, Sambalpur, Odisha\\ India}
\email{anujaprajapati49@gmail.com}
  
   \date{}
\thanks{The present investigation of the second author is supported
under the   INSPIRE fellowship, Department of Science and
Technology, New Delhi, Government of India, Sanction Letter No.
REL1/2016/2/2015-16.} \subjclass[2010]{Primary: 30C45, 30C80; Secondary:
30C80}
\begin{abstract} 
In the present paper, we derive the  third-order differential subordination and superordination results for some analytic univalent functions defined in the unit disc. These results are associated with generalized Struve functions and are obtained by considering suitable classes of admissible functions. As a consequence, the dual problems which yield the  sandwich type relations are presented.
\end{abstract}

\keywords{Analytic function, univalent function, differential subordination, differential superordination, Struve  function,  admissible function.}

\subjclass[2010]{Primary: 30C45, 30C80; Secondary: 33C10}
\maketitle{}
\section{Introduction and Preliminaries}\label{sec1}
\subsection{Struve function}The special functions have great importance in  geometric function theory especially after the proof of  famous Bieberbach conjecture which is solved by de-Branges \cite{branges}. Since  there are extensive literature dealing with various geometric properties of certain subclasses of analytic univalent function involving special functions such as  the generalized Gaussian hypergeometric function, Kummer's function and  Bessel functions etc. In the present paper, we are dealing with one of such function which is introduced and studied by Struve \cite{struve} (also see \cite{abramowitz}, \cite{watson}),  is the series solution of inhomogeneous second order Bessel differential equation. Struve functions are applied to various areas of applied mathematics and physics. In \cite{baricz2}, and the reference therein various applications   are illustrated  in different context and also for its application in geometric function theory we refer \cite{AMN},  \cite{BDOY}, \cite{BKP}, \cite{BPS},  \cite{BY}, etc. 

Let $S_{p}$ denote the \textit{Struve function of order $p$ } is of the form
\begin{equation}\label{z3}
S_{p}(z)=\sum_{n=0}^{\infty}\frac{(-1)^{n}}{\Gamma(n+3/2)\Gamma(p+n+3/2)} \left(\frac{z}{2}\right)^{2n+p+1},\qquad   \qquad  (z \in \mathbb{C}),
\end{equation}  
 which is a particular solution of  second-order in-homogeneous Bessel differential equation
 \begin{equation}\label{z2}
 z^{2}w^{\prime\prime}(z)+z w^{\prime}(z)+(z^{2}-p^{2})w(z)=\frac{4(z/2)^{p+1}}{\sqrt{\pi}\Gamma(p+1/2)},
 \end{equation}
 where $p$ is an unrestricted real (or complex) number. Note that the homogeneous part of (\ref{z2}) is the  Bessel equation. 
Consider the following differential equation
\begin{equation}\label{z4}
z^{2}w^{\prime\prime}(z)+z w^{\prime}(z)-(z^{2}+p^{2})w(z)=\frac{4(z/2)^{p+1}}{\sqrt{\pi}\Gamma(p+1/2)},
\end{equation}
which differs from (\ref{z2}) only in the coefficient of $w$. The particular solution of (\ref{z4}) is called the \textit{modified Struve function of order $p$} (\cite{nicholson}, also see\cite{zhang}) and is defined by the formula ,
\begin{equation}\label{z5}
L_{p}(z)=-ie^{-ip \pi/2} S_{p}(iz)=\sum_{n=0}^{\infty} \frac{1}{\Gamma(n+3/2)\Gamma(p+n+3/2)}\left(\frac{z}{2}\right)^{2n+p+1},~  \qquad (z\in \mathbb{C}).
\end{equation} 

Recently, Orhan and Yagmur (\cite{orhan} and \cite{yagmur}, also see \cite{habibullah}) considered  the following   second order inhomogeneous linear differential equation
\begin{equation}\label{z6}
z^{2}w^{\prime\prime}(z)+bz w^{\prime}(z)+(cz^{2}-p^{2}+(1-b)p)w(z)=\frac{4(z/2)^{p+1}}{\sqrt{\pi}\Gamma(p+b/2)},~ (b,c,p \in \mathbb{C}).
\end{equation}
If we choose $b=1$ and $c=1,$ then we get (\ref{z2}), and if we choose $b=1$ and $c=-1,$ then we get (\ref{z4}). This implies that (\ref{z6}) generalizes (\ref{z2}) and (\ref{z4}). Moreover, this permits to study the Struve function and modified Struve function. A particular solution of the differential equation (\ref{z6}), which is denoted by $W_{p,b,c}(z),$ is called the \textit{generalized Struve function  of order $p$}. We have  the following series representation of the function $W_{p,b,c}(z)$, 
\begin{equation}\label{z7}
W_{p,b,c}(z)=\sum_{n=0}^{\infty}\frac{(-c)^{n}}{\Gamma(n+3/2)\Gamma(p+n+(b+2)/2)}\left(z/2\right)^{2n+p+1},\quad ( z\in \mathbb{C}).
\end{equation}
The above series  is convergent in whole complex plane $\mathbb C$, however it is not univalent in the open unit disc $\mathcal{U}:=\{z:z\in\mathbb{C}~\text{and}~|z|< 1\}.$ Rewriting (\ref{z7}) with a suitable transformation on $z$, we have
\begin{equation}\label{z8}
U_{p,b,c}(z):= 2^{p}\sqrt{\pi}\Gamma \left(p+\frac{b+2}{2}\right)z^{\frac{-p-1}{2}} W_{p,b,c}(\sqrt{z}).
\end{equation}
It is pertinent that the equation (\ref{z8}) is analytic in $\mathbb C$ and has following Taylor series expansion:
\begin{equation}\label{z9}
U_{p,b,c}(z)= \sum_{n=0}^{\infty} \frac{(-c/4)^{n}}{(3/2)_{n}(a)_{n}}z^{n},\quad ( z\in \mathbb{C}),
\end{equation} 
where $ a=p+(b+2)/2 \neq 0,-1,-2, \cdots$, and 
 $(\lambda)_n$ is the Pochhammer symbol (or \textit{shifted
factorial}) defined in terms of the gamma function, by
\begin{gather*}
(\lambda)_n=\dfrac{\Gamma(\lambda+n)}{\Gamma(\lambda)}=\begin{cases} 1 & (n=0),\\
\lambda(\lambda+1)(\lambda+2)\cdots (\lambda+n-1)& (n \in \mathbb
N:= \{1,2,\cdots\}).
\end{cases}\end{gather*}
 In a recent work Habibullah et al. (\cite{habibullah}) derived the conditions on parameters $p,~b$ and $c$ such that $zU_{p,b,c}(z)$ is univalent in $\mathcal U$. For simplicity, we write $U_{a,c}(z)=zU_{p,b,c}(z)$. 
 Let $\mathcal{H}$  be  the class  of  functions  analytic  in  $\mathcal {U}
 $.  Denote $\mathcal{H}[\kappa,n]\quad(n \in \mathbb{N}:=\{1,2,3,\cdots\})$, the subclass of $\mathcal{H}$ consists of functions of the form
 $f(z)=\kappa+a_{n}z^{n}+a_{n+1}z^{n+1}+\cdots, \quad(z \in \mathcal{U})$ and $\mathcal{A}(\subset \mathcal{H})$ be the class of  functions analytic in $\mathcal U$  and has the Taylor-Maclaurin  series representation
 \begin{equation*}\label{z1}
 \mathit{f}(z)=z+\sum_{n=2}^{\infty}a_{n}z^{n},\qquad(z \in \mathcal{U}).
 \end{equation*}
We consider a new linear operator $S_{a,c}: \mathcal{A}\longrightarrow \mathcal{A},$ which is defined by the Hadamard product 
\begin{equation}\label{z10}
S_{a,c}f(z)= U_{a,c}(z)* f(z)=z + \sum_{n=1}^{\infty} \frac{(-c/4)^{n}}{(3/2)_{n}(a)_{n}}a_{n+1}z^{n+1},\qquad(z \in \mathcal{U}),
\end{equation}
where $*$ denote the convolution or Hadamard product. Note that convolutions of two analytic functions is also analytic \cite{ruscheweyh12}.
It is easy to verify from the Definition (\ref{z10}) that
\begin{equation}\label{z11}
z(S_{a+1,c}f(z))^{\prime}=a S_{a,c}f(z)-(a-1)S_{a+1,c}f(z),
\end{equation}
where $a=p+(b+1)/2 \neq 0,-1,-2 \cdots.$ The function $S_{a,c}f(z)$ is an elementary transform of the generalized hypergeometric function \cite{andrew} defined by 
\begin{gather*}
_{q}F_{s}(\alpha_{1}, \cdots , \alpha_{q};\beta_{1}, \cdots,\beta_{s}:z)=\sum_{n=0}^{\infty}\frac{(\alpha_{1})_{n}\cdots (\alpha_{q})_{n}}{(\beta_{1})_{n} \cdots (\beta_{s})_{n}}\frac{z^{n}}{n!},\qquad (z\in\mathcal{U}),\\ (\alpha_{i}\in \mathbb{C}; \beta_{j}\in \mathbb{C}\setminus \mathbb{Z}_{0}^{-}; q\leq s+1; q, s \in \mathbb{N}\cup\{0\}; i=1, 2, \cdots, q; j=1,2,\cdots,s).
\end{gather*}
In terms of $_{1}F_{2}$ hypergeometric function, Struve functions defined in (\ref{z10}) is rewritten as: 
\begin{equation*}
S_{a,c}(z)=z _{1}F_{2}\left(1;\frac{3}{2}, a;\frac{-c}{4}z\right)*f(z).
\end{equation*}
We observed that, for suitable choices of the parameters $b$ and $c,$ we obtain some new operators:
\begin{itemize}
\item[(i)] Putting $b=c=1$ in (\ref{z9}), we have the operator $\mathfrak{S}:\mathcal{A}\longrightarrow \mathcal{A}$ familiar with Struve function , defined by
\begin{equation}\label{z12}
\mathfrak{S}f(z)=zU_{p,1,1}(z)*f(z)=z+\sum_{n=1}^{\infty}\frac{(-1)^{n}a_{n+1}z^{n+1}}{(p+3/2)_{n}(2n+1)!}.
\end{equation}
\item[(ii)] Putting $b=1$ and $c=-1$ in (\ref{z9}), we obtain the operator $\mathfrak{I}:\mathcal{A}\longrightarrow \mathcal{A}$ related with modified Struve function, defined by
\begin{equation}\label{z13}
\mathfrak{I}f(z)=zU_{p,1,-1}(z)*f(z)=z+\sum_{n=1}^{\infty} \frac{a_{n+1}z^{n+1}}{(p+3/2)_{n}(2n+1)!}.
\end{equation}
\end{itemize}
Miller and Mocanu \cite{miller4} investigated the dual problem of differential subordination, whereas Bulboaca \cite{bulboaca} investigated both subordination and superordination related results. The theory of first and second order differential subordination and superordination have been used by numerous authors to solve various  problems in this field. For some of the recent works of Gochhayat and others on this direction see \cite{baricz,baricz1,cho,xu,pg,
pg1,pg2,srivastava,cho1,cho2,pg123,mishra25}, also see the monographs \cite{miller3} and \cite{bulboaca} and the reference therein.  
Recently, Antonino and Miller \cite{antonion}  extended the theory of second order differential subordination in the open unit disk $\mathcal{U}$ introduced by Miller and Mocanu \cite{miller3} to the third order case. However, the concept of  third order differential subordination have originally found in the work of Ponnusamy and Juneja \cite{ponnusamy}, ( also see \cite{miller3}). In 2014, Tang et al. \cite{tang} introduced the concept of third order differential superordination, which is a generalization of the second order differential superordination. 
In the recent years, few works have been carried out on results related to the third order differential subordination and superordination  in the different context. For example see (\cite{farzana,ibrahim,jeyaraman, raducanu,tang,tang1,tang2}). In this present investigation our aim is to determine third order differential subordination and superordination  of generalized Struve function by using the technique developed in \cite{antonion} and \cite{tang}. 

\subsection{Basic facts on differential subordination}In order to achieve our aim in this section, we recall some definitions and preliminary results from the theory of differential subordination and superordination. 

Suppose that $f$ and $g$ are in $\mathcal H$. We say that $f$ is \textit{subordinate} to $g$, (or $g$ is
\textit{superordinate} to $f$), write as $f \prec g ~\text{in}~\mathcal U~\text{ or}~ f(z) \prec g(z) \quad (z \in \mathcal U),$ if there exists a function $\omega\in \mathcal H$, satisfying the conditions of the Schwarz lemma
$(~\text{i.e.}~\omega(0)=0$ and $|\omega(z)|< 1)$ such that
$f(z)=g(\omega(z))\quad(z \in \mathcal U).$ It follows that
$
f(z) \prec g(z)\;(z \in \mathcal U) \Longrightarrow f(0)=g(0) ~
\text{and} ~ f(\mathcal U) \subset g(\mathcal U).
$ In particular, if $g$ is \textit{univalent} in $\mathcal U$, then
the reverse implication also holds (cf.\cite{miller3}). 
\begin{defn}\label{d3a}[\cite{antonion}, Definition 1, p.440].
	Let $\psi:\mathbb{C}^{4} \times \mathcal{U}\longrightarrow \mathbb{C}$ and the function $h(z)$ be univalent in $\mathcal{U}.$ If the function $p(z)$ is analytic in $\mathcal{U}$ and satisfies the following third-order differential subordination
	\begin{equation}\label{p1}
	\psi(p(z),zp^{\prime}(z),z^{2}p^{\prime\prime}(z),z^{3}
	p^{\prime\prime\prime}(z);z)\prec h(z),
	\end{equation}
	then $p(z)$ is called a \emph{solution} of the differential subordination.
\end{defn}
A univalent function $q(z)$ is called a \emph{dominant} of the solutions of the differential subordination,or, more simply, a dominant if $p(z)\prec q(z)$ for all $p(z)$ satisfying (\ref{p1}). A dominant $\tilde{q}(z)$ 
that satisfies $\tilde{q}(z)\prec q(z)$ 
for all dominants $q(z)$ of (\ref{p1}) is said to be the \emph{best dominant}. 
\begin{defn}\label{d5}\cite{tang}
Let $\psi:\mathbb{C}^{4} \times \mathcal{U}\longrightarrow \mathbb{C}$ and the function $h(z)$ be univalent in $\mathcal{U}.$ If the function $p(z)$ and
\begin{equation*}
\psi(p(z),zp^{\prime}(z),z^{2}p^{\prime\prime}(z),z^{3}
p^{\prime\prime\prime}(z);z)
\end{equation*}
	are univalent in $\mathcal{U}$ and satisfies the following third-order differential superordination
	\begin{equation}\label{p2}
	h(z) \prec \psi(p(z),zp^{\prime}(z),z^{2}p^{\prime\prime}(z),z^{3}
	p^{\prime\prime\prime}(z);z),
	\end{equation}
	then $p(z)$ is called a \emph{solution} of the differential superordination. An analytic function $q(z)$ is called a \emph{subordinant} of the solutions of the differential superordination, or more simply a subordinant, if $ q(z) \prec p(z)$ for all $p(z)$ satisfying (\ref{p2}).
\end{defn}
A univalent subordinant $\tilde{q}(z)$ that satisfies $q(z)\prec \tilde{q}(z)$ 
for all subordinant $q(z)$ of (\ref{p2}) is said to be the \emph{best subordinant}. We note that both the best dominant and best subordinant are unique up to rotation of $\mathcal U$.
\begin{defn}\label{d4}[\cite{antonion}, Definition 2, p.441].
	Let $\mathcal{Q}$ denote the set of functions $q$ that are analytic and univalent on the set $\overline{\mathcal{U}}\setminus E(q),$ where $E(q)=\{\xi:\xi \in \partial \mathcal{U}:\lim_{z\rightarrow \xi}q(z)= \infty\},$ and are such that $\min \mid q^{\prime}(\xi) \mid=\rho >0$ for $\xi \in \partial \mathcal{U} \setminus E(q).$
Further, let the subclass of $\mathcal{Q}$ for which $q(0)=\kappa$ be denoted by $\mathcal{Q}(\kappa),\mathcal{Q}(0)=\mathcal{Q}_{0}$ and $\mathcal{Q}(1)=\mathcal{Q}_{1}.$
\end{defn}
The subordination methodology is applied to an appropriate class of admissible functions. The following class of admissible functions was given by Antonino and Miller.
\begin{defn}\label{d2a}[\cite{antonion}, Definition 3, p.449].
	Let $\Omega$ be a set in $\mathbb{C}$ and $q \in \mathcal{Q}$  and $n \in \mathbb{N} \setminus \{1\}.$ The class of admissible functions $\Psi_{n}[\Omega,q]$ consists of those functions $\psi:\mathbb{C}^{4} \times \mathcal{U}\longrightarrow \mathbb{C}$ achieving the following admissibility conditions:
	$$\psi(r,s,t,u;z) \not\in \Omega$$
	whenever
	$$r=q(\zeta),s=k\zeta q^{\prime}(\zeta), \Re\left(\frac{t}{s}+1\right)\geq k \Re\left(\frac{\zeta q^{\prime\prime}(\zeta)}{q^{\prime}(\zeta)}+1\right),$$
	and $$\Re\left(\frac{u}{s}\right)\geq k^{2} \Re\left(\frac{\zeta^{2} q^{\prime\prime\prime}(\zeta)}{q^{\prime}(\zeta)}\right),$$
	where $z \in \mathcal{U},\zeta \in \partial \mathcal{U} \setminus E(q),$ and $k\geq n.$
\end{defn}
The next lemma is the foundation result in the theory of third-order differential subordination.
\begin{lemma}\label{t4}[\cite{antonion}, Theorem 1, p.449].
Let $p \in \mathcal{H}[\kappa,n]$ with $n\geq 2,$ and $q \in \mathcal{Q}(\kappa)$ achieving the following conditions:
\begin{equation*}
\Re\left(\frac{\zeta q^{\prime\prime}(\zeta)}{q^{\prime}(\zeta)}\right)\geq 0,\qquad \left|\frac{zq^{\prime}(z)}{q^{\prime}(\zeta)}\right|\leq k,
\end{equation*}	
 where $z \in \mathcal{U},\zeta \in \partial \mathcal{U} \setminus E(q),$ and $k\geq n.$ If $\Omega$ is a set in $\mathbb{C},\psi \in \Psi_{n}[\Omega,q]$ and 
\begin{equation*}	
\psi \left(p(z),zp^{\prime}(z),z^{2}p^{\prime\prime}(z),z^{3}p^{\prime\prime\prime}(z);z\right) \subset \Omega,
\end{equation*}	
then 
\begin{equation*}
p(z) \prec q(z)\qquad(z \in \mathcal{U}).
\end{equation*}
\end{lemma}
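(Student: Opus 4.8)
The plan is to argue by contradiction, adapting to the third-order setting the classical Miller--Mocanu argument behind the analogous second-order result. One first disposes of the trivial case $p\equiv\kappa$: then $p(z)=\kappa=q(0)$ and, $q$ being univalent, the Schwarz function $\omega\equiv 0$ shows $p\prec q$. So assume $p\not\equiv\kappa$, and suppose for contradiction that $p$ is \emph{not} subordinate to $q$.

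The heart of the matter is a contact-point lemma: under the stated hypotheses on $q$, non-subordination of $p$ to $q$ forces the existence of points $z_{0}\in\mathcal{U}$, $\zeta_{0}\in\partial\mathcal{U}\setminus E(q)$ and a real number $m$ with $n\le m\le k$ such that $p(\{|z|<|z_{0}|\})\subset q(\mathcal{U})$ and
\begin{gather*}
p(z_{0})=q(\zeta_{0}),\qquad z_{0}p'(z_{0})=m\,\zeta_{0}q'(\zeta_{0}),\\
\Re\left(\frac{z_{0}p''(z_{0})}{p'(z_{0})}+1\right)\ge m\,\Re\left(\frac{\zeta_{0}q''(\zeta_{0})}{q'(\zeta_{0})}+1\right),\\
\Re\left(\frac{z_{0}^{2}p'''(z_{0})}{p'(z_{0})}\right)\ge m^{2}\,\Re\left(\frac{\zeta_{0}^{2}q'''(\zeta_{0})}{q'(\zeta_{0})}\right).
\end{gather*}
To prove this I would let $r_{0}$ be the supremum of those $r\in(0,1)$ for which $p(\{|z|<r\})\subset q(\mathcal{U})$; non-subordination forces $r_{0}<1$, and compactness produces a point $z_{0}$ with $|z_{0}|=r_{0}$ and $p(z_{0})=q(\zeta_{0})\in\partial q(\mathcal{U})$ for some $\zeta_{0}\in\partial\mathcal{U}\setminus E(q)$ (with $q'(\zeta_{0})\ne 0$, since $\min|q'|=\rho>0$ on $\partial\mathcal{U}\setminus E(q)$). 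The composition $\omega:=q^{-1}\circ p$ is then analytic on $\{|z|<r_{0}\}$, extends analytically across $z_{0}$, maps $\{|z|<r_{0}\}$ into $\mathcal{U}$, vanishes at $0$ to order at least $n$ (because $p-\kappa$ does and $q'(0)\ne 0$), and satisfies $|\omega(z_{0})|=1=\max_{|z|\le r_{0}}|\omega(z)|$. A careful analysis of $|\omega|$ on the circle $|z|=r_{0}$ near its maximum — differentiating $\log|\omega(r_{0}e^{it})|^{2}$ in $t$ and exploiting the first-, second- and third-order information at the maximizing value of $t$, in the spirit of Jack's (Clunie--Jack) lemma and its higher-order refinements — yields $z_{0}\omega'(z_{0})=m\,\omega(z_{0})$ with $m\ge n$, together with sharp lower bounds for $\Re\bigl(z_{0}\omega''(z_{0})/\omega'(z_{0})\bigr)$ and $\Re\bigl(z_{0}^{2}\omega'''(z_{0})/\omega'(z_{0})\bigr)$. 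Then, substituting $p=q\circ\omega$, $p'=q'(\omega)\,\omega'$, $p''=q''(\omega)(\omega')^{2}+q'(\omega)\,\omega''$ and $p'''=q'''(\omega)(\omega')^{3}+3q''(\omega)\,\omega'\omega''+q'(\omega)\,\omega'''$, evaluating at $z_{0}$, and simplifying via $z_{0}\omega'(z_{0})=m\,\omega(z_{0})$, one converts those bounds into the three displayed chains; the hypotheses $\Re(\zeta q''(\zeta)/q'(\zeta))\ge 0$ and $|zp'(z)/q'(\zeta)|\le k$ are exactly what makes this conversion go through (the second one also forcing $m\le k$).

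Granting the contact-point lemma, the conclusion is immediate. Set $r=p(z_{0})$, $s=z_{0}p'(z_{0})$, $t=z_{0}^{2}p''(z_{0})$ and $u=z_{0}^{3}p'''(z_{0})$. The displayed relations say precisely that $r=q(\zeta_{0})$, $s=m\zeta_{0}q'(\zeta_{0})$, $\Re(t/s+1)\ge m\,\Re(\zeta_{0}q''(\zeta_{0})/q'(\zeta_{0})+1)$ and $\Re(u/s)\ge m^{2}\Re(\zeta_{0}^{2}q'''(\zeta_{0})/q'(\zeta_{0}))$, so — since $m\ge n$ — the quadruple $(r,s,t,u)$ together with the point $z_{0}$ verifies the admissibility requirements of Definition~\ref{d2a} with $k$ replaced by $m$. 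Because $\psi\in\Psi_{n}[\Omega,q]$, this yields $\psi(r,s,t,u;z_{0})\notin\Omega$. On the other hand, the hypothesis that $\psi\bigl(p(z),zp'(z),z^{2}p''(z),z^{3}p'''(z);z\bigr)$ lies in $\Omega$ for every $z\in\mathcal{U}$, read at $z=z_{0}$, gives $\psi(r,s,t,u;z_{0})\in\Omega$ — a contradiction. Hence $p\prec q$ in $\mathcal{U}$.

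The one step that requires genuine effort is the contact-point lemma, and within it the derivation of the third inequality: the chain rule for $p'''$ produces cross terms involving $\omega''$, $q''$ and $q'''$ whose combined real part must be shown to be nonnegative, and it is precisely here that the convexity-type condition $\Re(\zeta q''(\zeta)/q'(\zeta))\ge 0$ and the control $|zp'(z)/q'(\zeta)|\le k$ on the multiplier $m$ are used. By comparison, the second-order versions of these estimates and the final contradiction step are routine.
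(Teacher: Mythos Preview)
The paper does not supply a proof of this lemma: it is quoted from Antonino and Miller \cite{antonion} (Theorem~1, p.~449) and used thereafter as a black box in the proofs of Theorems~\ref{t6}, \ref{t9} and \ref{t11}. There is therefore no in-paper argument against which to compare your proposal.

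That said, your sketch is exactly the argument of the cited source: contradiction via a third-order contact-point (Jack--Clunie--type) lemma, producing at some $z_{0}\in\mathcal{U}$ and $\zeta_{0}\in\partial\mathcal{U}\setminus E(q)$ a quadruple $(r,s,t,u)$ that triggers the admissibility exclusion $\psi(r,s,t,u;z_{0})\notin\Omega$ while simultaneously lying in $\Omega$ by hypothesis. The architecture and the identification of the two auxiliary hypotheses on $q$ as precisely what is needed to push the chain-rule cross terms through in the third-order estimate are both correct.

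One small textual point: you invoke the bound $|zp'(z)/q'(\zeta)|\le k$, whereas the paper's transcription of the lemma reads $|zq'(z)/q'(\zeta)|\le k$. The version with $p'$ is what Antonino--Miller actually assume, what your argument requires (since at the contact point $|z_{0}p'(z_{0})/q'(\zeta_{0})|=m$), and what is consistent both with the superordination counterpart in Lemma~\ref{t5} and with the paper's own applications, where the numerator is always built from $p$-data such as $S_{a,c}f$. The $q'$ in the numerator here is almost certainly a typographical slip in the paper, not an error on your part.
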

\begin{defn}\label{d6}\cite{tang}
Let $\Omega$ be a set in $\mathbb{C}$ and $q \in \mathcal{H}[\kappa,n]$  and $ q^{\prime}(z)\neq 0.$ The class of admissible functions $\Psi_{n}^{\prime}[\Omega,q]$ consists of those functions $\psi:\mathbb{C}^{4} \times \mathcal{\overline{U}}\longrightarrow \mathbb{C}$ that satisfy the following admissibility conditions:
\begin{equation*}
\psi(r,s,t,u;z) \in \Omega
\end{equation*}
whenever
\begin{equation*}
r=q(z),s=\frac{zq^{\prime}(z)}{m}, \Re\left(\frac{t}{s}+1\right)\leq \frac{1}{m} \Re\left(\frac{z q^{\prime\prime}(z)}{q^{\prime}(z)}+1\right),
\end{equation*}
and 
\begin{equation*}
\Re\left(\frac{u}{s}\right)\leq \frac{1}{m^{2}} \Re\left(\frac{z^{2} q^{\prime\prime\prime}(z)}{q^{\prime}(z)}\right),
\end{equation*}
where $z \in \mathcal{U},z \in \partial \mathcal{U}$ and $m\geq n \geq 2.$
\end{defn} 
\begin{lemma}\label{t5}\cite{tang}
Let $p \in \mathcal{H}[\kappa,n]$ with $\psi \in \Psi_{n}^{\prime}[\Omega,q].$ If
\begin{equation*}
\psi(p(z),zp^{\prime}(z),z^{2}p^{\prime\prime}(z),z^{3}p^{\prime\prime\prime}(z);z)
\end{equation*}	
is univalent in $\mathcal{U}$ and $p \in \mathcal{Q}(\kappa)$ satisfying the following conditions:
\begin{equation*}
	\Re\left(\frac{z q^{\prime\prime}(z)}{q^{\prime}(z)}\right)\geq 0,\qquad \left|\frac{zp^{\prime}(z)}{q^{\prime}(z)}\right|\leq m, 
\end{equation*}	
	where $z \in \mathcal{U},\zeta \in \partial \mathcal{U},$ and $m\geq n \geq 2,$ then
\begin{equation*}	
\Omega \subset \{\psi \left(p(z),zp^{\prime}(z),z^{2}p^{\prime\prime}(z),z^{3}
	p^{\prime\prime\prime}(z);z\right):z \in \mathcal{U}\}
\end{equation*}
implies that 
\begin{equation*}	
q(z) \prec p(z)\qquad  (z \in \mathcal{U}).
\end{equation*}
\end{lemma}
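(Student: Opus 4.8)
The plan is to prove $q\prec p$ by contradiction, recognising this superordination lemma as the exact dual of the subordination statement of Lemma \ref{t4} and following the Miller--Mocanu passage from one theory to the other. The first step is the standard regularisation: one replaces $p$ by its dilation $p_{\rho}(z):=p(\rho z)$ for $\rho\in(0,1)$, which is analytic on a neighbourhood of $\overline{\mathcal U}$, satisfies $p_{\rho}\prec p$, and keeps $\psi\bigl(p_{\rho}(z),zp_{\rho}'(z),z^{2}p_{\rho}''(z),z^{3}p_{\rho}'''(z);z\bigr)$ univalent in $\mathcal U$; using $q'(z)\neq0$, $\Re\bigl(zq''(z)/q'(z)\bigr)\ge0$ and $|zp'(z)/q'(z)|\le m$ one verifies that the hypotheses are inherited by $p_{\rho}$ together with the same $q$, so it is enough to prove $q\prec p_{\rho}$ for every $\rho<1$ and then let $\rho\to1^{-}$, since $p_{\rho}\to p$ locally uniformly. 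From now on I work with a $p$ that extends continuously to $\overline{\mathcal U}\setminus E(p)$ with the needed regularity.

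Suppose, for a contradiction, that $q\not\prec p$. Since $p$ is univalent with $p(0)=\kappa=q(0)$, this means $q(\mathcal U)\not\subseteq p(\mathcal U)$. Shrinking the analytic function $q$ (that is, considering $q\bigl(t\,\overline{\mathcal U}\bigr)$ as $t$ increases to $1$) and using the maximum principle together with $q\in\mathcal{H}[\kappa,n]$, one obtains the third-order superordination analogue of the boundary-touching lemma that underlies Lemma \ref{t4} (the central auxiliary result of \cite{tang}): there are points $z_{0}\in\mathcal U$, $\zeta_{0}\in\partial\mathcal U\setminus E(p)$ and a real number $m\ge n$ with
\[
q(z_{0})=p(\zeta_{0}),\qquad z_{0}q'(z_{0})=m\,\zeta_{0}p'(\zeta_{0}),
\]
\[
\Re\!\left(1+\frac{\zeta_{0}p''(\zeta_{0})}{p'(\zeta_{0})}\right)\le\frac{1}{m}\,\Re\!\left(1+\frac{z_{0}q''(z_{0})}{q'(z_{0})}\right),\qquad
\Re\!\left(\frac{\zeta_{0}^{2}p'''(\zeta_{0})}{p'(\zeta_{0})}\right)\le\frac{1}{m^{2}}\,\Re\!\left(\frac{z_{0}^{2}q'''(z_{0})}{q'(z_{0})}\right).
\]
It is worth stressing that here the scaling enters as $1/m$ and $1/m^{2}$ rather than as $m$ and $m^{2}$: this is exactly why the admissible class $\Psi_{n}^{\prime}[\Omega,q]$ of Definition \ref{d6} was set up with $s=zq'(z)/m$ and with $1/m,1/m^{2}$ weighting the real parts, so that the touching relations are precisely the data that trigger non-membership in $\Omega$ for a subordination-type functional.

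Now put $r:=q(z_{0})=p(\zeta_{0})$, $s:=z_{0}q'(z_{0})/m=\zeta_{0}p'(\zeta_{0})$, $t:=\zeta_{0}^{2}p''(\zeta_{0})$ and $u:=\zeta_{0}^{3}p'''(\zeta_{0})$. The displayed relations say that $r=q(z_{0})$, $s=z_{0}q'(z_{0})/m$, $\Re(t/s+1)\le\frac1m\Re\bigl(z_{0}q''(z_{0})/q'(z_{0})+1\bigr)$ and $\Re(u/s)\le\frac1{m^{2}}\Re\bigl(z_{0}^{2}q'''(z_{0})/q'(z_{0})\bigr)$ with $m\ge n$, so $(r,s,t,u;z_{0})$ satisfies the admissibility conditions of $\Psi_{n}^{\prime}[\Omega,q]$; since $\psi\in\Psi_{n}^{\prime}[\Omega,q]$ this forces $\psi(r,s,t,u;z_{0})\in\Omega$. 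On the other hand, because $q(z_{0})=p(\zeta_{0})$ and the first three derivative slots have been matched to the boundary data of $p$ at $\zeta_{0}$, the value $\psi(r,s,t,u;z_{0})$ is --- by continuity of $\psi$ and the extension of $p$ past $\zeta_{0}\notin E(p)$ --- a boundary value of the univalent map $z\mapsto\psi\bigl(p(z),zp'(z),z^{2}p''(z),z^{3}p'''(z);z\bigr)$, hence it is not attained inside $\mathcal U$ and so does not belong to $\bigl\{\psi(p(z),zp'(z),z^{2}p''(z),z^{3}p'''(z);z):z\in\mathcal U\bigr\}\supseteq\Omega$. This contradicts $\psi(r,s,t,u;z_{0})\in\Omega$, and therefore $q\prec p$.

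I expect the genuinely delicate step to be the second paragraph: establishing the superordination boundary-touching lemma with the correctly reversed inequalities, and reconciling the boundary point $\zeta_{0}$ of the univalent $p$ with the interior point $z_{0}$ produced by shrinking $q$, so that the two evaluations of $\psi$ in the third paragraph genuinely agree. It is for this reconciliation, and for the limit $\rho\to1^{-}$, that the regularisation of the first paragraph and the hypotheses $\Re\bigl(zq''/q'\bigr)\ge0$ and $|zp'/q'|\le m$ are used, exactly as in the proof of Lemma \ref{t4}; the remainder is routine bookkeeping.
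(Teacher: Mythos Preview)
The paper does not prove this lemma; it is quoted verbatim from Tang et al.\ \cite{tang} as a preliminary result and no argument is supplied here, so there is no ``paper's own proof'' to compare your attempt against.

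Your outline is the standard Miller--Mocanu duality argument and is correct in spirit, but there is one slip in the third paragraph: the last argument of $\psi$ should be the boundary point $\zeta_{0}$, not the interior point $z_{0}$. The admissibility class $\Psi_{n}'[\Omega,q]$ in Definition~\ref{d6} (once the evident typo ``$z\in\mathcal U,\ z\in\partial\mathcal U$'' is repaired to ``$z\in\mathcal U,\ \zeta\in\partial\mathcal U$'') requires $\psi(r,s,t,u;\zeta)\in\Omega$ with $\zeta\in\partial\mathcal U$, and the boundary value of the univalent map $z\mapsto\psi\bigl(p(z),zp'(z),z^{2}p''(z),z^{3}p'''(z);z\bigr)$ at $z=\zeta_{0}$ likewise carries $\zeta_{0}$ in the fifth slot. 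With $\zeta_{0}$ in place of $z_{0}$ in both evaluations the two values of $\psi$ genuinely coincide (your identification $r=p(\zeta_{0})$, $s=\zeta_{0}p'(\zeta_{0})$, $t=\zeta_{0}^{2}p''(\zeta_{0})$, $u=\zeta_{0}^{3}p'''(\zeta_{0})$ is exactly right), and the contradiction goes through: the point lies in $\Omega$ by admissibility, yet lies on the boundary of the image of the univalent map and hence outside the open subset $\Omega$ of that image.
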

Recently, by making use of Hohlov operator Gochhayat et al. (cf.\cite{mishra}) have derived various third order differential subordination and superordination related results.  In this  investigation, by considering suitable classes of admissible functions, we obtained some interesting inclusion results on third order differential subordination and superordination involving $S_{a,c}$. More precisely, we have shown that the sandwich-type relations of the form $$q_1(z)\prec \Xi(z) \prec
q_2(z), \qquad (z\in \mathcal U),$$
 holds, where $q_1,~q_2$ are univalent in $\mathcal U$ with suitable normalizations, and $\Xi(z)$ is one of the variant of ${S_{a,c}  f(z)}$. 
\section{Results based on differential  subordination}
In this section the following class of admissible functions is introduced which are required to prove the main third-order differential subordination theorems involving the operator $S_{a,c}$  defined by (\ref{z10}).
\begin{defn}\label{d1}
Let $\Omega$ be a set in $\mathbb{C}$ and $q \in \mathcal{Q}_{0}\bigcap \mathcal{H}_{0}$. The class of admissible function $\Phi_{\mathit{S}}[\Omega,q]$ consists of those functions $\phi:\mathbb{C}^{4}\times \mathcal{U}\longrightarrow\mathbb{C}$ that satisfy the following admissibility conditions:
\begin{equation*}
\phi(\alpha,\beta,\gamma,\delta;z) \not\in \Omega
\end{equation*}
whenever
\begin{equation*}
\alpha=q(\zeta),\beta=\frac{k\zeta q^{\prime}(\zeta)+(a-1)q(\zeta)}{a},
\end{equation*}
\begin{equation*}
\Re \left(\frac{a(a-1)\gamma-(a-2)(a-1)\alpha}{a\beta-(a-1)\alpha}-(2a-3)\right)
\geq k \Re\left(\frac{\zeta q^{\prime\prime}(\zeta)}{q^{\prime}(\zeta)}+1\right),
\end{equation*}
and 
\begin{equation*}
\Re
\left(\frac{a(a-1)((1-a)\alpha+(3a\beta+(1-3a)\gamma+(a-2)\delta)}{\alpha+a(\beta-\alpha)}\right)
\geq k^{2} \Re\left(\frac{\zeta^{2} q^{\prime\prime\prime}(\zeta)}{q^{\prime}(\zeta)}\right),
\end{equation*}
where $z \in \mathcal{U},\zeta \in \partial \mathcal{U} \setminus E(q),$ and $k\geq 2.$
\end{defn}
\begin{theorem}\label{t6}
Let $\phi \in \Phi_{S}[\Omega,q]$. If the function $f \in \mathcal{A}$ and $q \in \mathcal{Q}_{0}$ satisfy the following conditions:
\begin{equation}\label{z16}
\Re\left(\frac{\zeta q^{\prime\prime}(\zeta)}{q^{\prime}(\zeta)}\right)\geq 0 \quad,\left|\frac{S_{a,c}f(z)}{q^{\prime}(\zeta)}\right|\leq k,
\end{equation}
and 
\begin{equation}\label{z17}
\left\{\phi(S_{a+1,c}f(z),S_{a,c}f(z),S_{a-1,c}f(z),S_{a-2,c}f(z);z):z \in \mathcal{U}\right\}\subset \Omega,
\end{equation}
then 
\begin{equation*}
S_{a+1,c}f(z) \prec q(z)\qquad(z \in \mathcal{U}).
\end{equation*}
\begin{proof}
Define the analytic function $p(z)$ in $\mathcal{U}$ by
\begin{equation}\label{z18}
p(z)=S_{a+1,c}f(z).
\end{equation}
From equation (\ref{z18}) and (\ref{z11}), we have
\begin{equation}\label{z19}
S_{a,c}f(z)=\frac{zp^{\prime}(z)+(a-1)p(z)}{a}.
\end{equation}
By similar argument yields,
\begin{equation}\label{z20}
S_{a-1,c}f(z)=\frac{z^{2}p^{\prime\prime}(z)+2z(a-1)p^{\prime}(z)+(a-2)(a-1)p(z)}{a(a-1)}
\end{equation}
and
\begin{equation}\label{z21}
S_{a-2,c}f(z)=\frac{z^{3}p^{\prime\prime\prime}(z)+3(a-1)z^{2}p^{\prime\prime}(z)+3(a-1)(a-2)zp^{\prime}(z)+(a-1)(a-2)(a-3)p(z)}{a(a-1)(a-2)}.
\end{equation}
Define the transformation from $\mathbb{C}^{4}$ to $\mathbb{C}$ by
\begin{equation*}
\alpha(r,s,t,u)=r,\qquad \beta(r,s,t,u)=\frac{s+(a-1)r}{a},
\end{equation*}
\begin{equation}\label{z22}
\gamma(r,s,t,u)=\frac{t+2(a-1)s+(a-2)(a-1)r}{a(a-1)}
\end{equation}
and
\begin{equation}\label{z23}
\delta(r,s,t,u)=\frac{u+3(a-1)t+3(a-1)(a-2)s+(a-1)(a-2)(a-3)r}{a(a-1)(a-2)}.
\end{equation}
Let
\begin{multline}\label{z24}
\psi(r,s,t,u)=\phi(\alpha,\beta,\gamma,\delta;z)=
\phi \bigg(r,\frac{s+(a-1)r}{a},\frac{t+2(a-1)s+(a-2)(a-1)r}{a(a-1)},\\ \frac{u+3(a-1)t+3(a-1)(a-2)s+(a-1)(a-2)(a-3)r}{a(a-1)(a-2)};z\bigg).
\end{multline}
The proof will make use of  Lemma \ref{t4}. Using equations (\ref{z18}) to (\ref{z21}), and from (\ref{z24}), we have
\begin{equation}\label{z25}
\psi \left(p(z),zp^{\prime}(z),z^{2}p^{\prime\prime}(z),z^{3}p^{\prime\prime\prime}(z);z\right)=\phi \left(S_{a+1,c}f(z),S_{a,c}f(z),S_{a-1,c}f(z),S_{a-2,c}f(z);z \right).
\end{equation} 
Hence,(\ref{z17}) becomes
\begin{equation*}
\psi \left(p(z),zp^{\prime}(z),z^{2}p^{\prime\prime}(z),z^{3}p^{\prime\prime\prime}(z);z\right) \in \Omega.
\end{equation*}
Note that
\begin{equation*}
\frac{t}{s}+1=\frac{a(a-1)\gamma-(a-2)(a-1)\alpha}{a\beta-(a-1)\alpha}-(2a-3),
\end{equation*}
and
\begin{equation*}
\frac{u}{s}=\frac{a(a-1)((1-k)\alpha+3a\beta+(1-3a)\gamma+(a-2)\delta)}{\alpha+a(\beta-\alpha)}.
\end{equation*}
Thus, the admissibility condition for $\phi \in \Phi_{\mathcal{I}}[\Omega,q]$ in Definition \ref{d1} is equivalent to the admissibility condition for $\psi \in \Psi_{2}[\Omega,q]$ as given in Definition \ref{d2a} with $n=2$. Therefore, by using (\ref{z16}) and Lemma \ref{t4}, we have
\begin{equation*}
S_{a+1,c}f(z)\prec q(z).
\end{equation*}
This completes the proof of theorem.
\end{proof}
\end{theorem}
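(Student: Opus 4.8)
The plan is to convert the third-order differential subordination involving the chain of operators $S_{a+1,c}f,\;S_{a,c}f,\;S_{a-1,c}f,\;S_{a-2,c}f$ into a standard third-order differential subordination for a single analytic function, and then apply Lemma \ref{t4}. First I would set $p(z):=S_{a+1,c}f(z)$; since $f\in\mathcal A$, $p$ is analytic in $\mathcal U$ with $p(0)=0$, so $p\in\mathcal H_{0}$. The recurrence \eqref{z11} is the only structural input needed: used directly it gives $S_{a,c}f(z)=\big(zp'(z)+(a-1)p(z)\big)/a$, and used with $a$ replaced successively by $a-1$ and by $a-2$ — each time after differentiating the identity already obtained — it expresses $S_{a-1,c}f(z)$ and $S_{a-2,c}f(z)$ as explicit second- and third-order linear differential polynomials in $p$, with denominators $a(a-1)$ and $a(a-1)(a-2)$. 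These formulas have to be recorded exactly, because their coefficients are precisely what enters the admissibility conditions.

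Next I would introduce the change of variables $\alpha(r,s,t,u)=r$, $\beta(r,s,t,u)=(s+(a-1)r)/a$, together with the second- and third-order analogues $\gamma,\delta$ read off from the formulas above, and define $\psi(r,s,t,u;z):=\phi(\alpha,\beta,\gamma,\delta;z)$. By construction, substituting $(r,s,t,u)=(p(z),zp'(z),z^{2}p''(z),z^{3}p'''(z))$ recovers exactly $\phi\big(S_{a+1,c}f(z),S_{a,c}f(z),S_{a-1,c}f(z),S_{a-2,c}f(z);z\big)$, so hypothesis \eqref{z17} becomes $\psi\big(p(z),zp'(z),z^{2}p''(z),z^{3}p'''(z);z\big)\in\Omega$ for all $z\in\mathcal U$.

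The heart of the argument is to verify that $\psi\in\Psi_{2}[\Omega,q]$ in the sense of Definition \ref{d2a} with $n=2$. For this I would invert the linear relations: writing $s=a\beta-(a-1)\alpha$, a direct computation gives
\[
\frac{t}{s}+1=\frac{a(a-1)\gamma-(a-2)(a-1)\alpha}{a\beta-(a-1)\alpha}-(2a-3),
\]
and an analogous but lengthier manipulation rewrites $u/s$ as the rational expression in $\alpha,\beta,\gamma,\delta$ appearing in Definition \ref{d1}. Inserting the boundary data $r=q(\zeta)$ and $s=k\zeta q'(\zeta)$ — which forces $\beta=\big(k\zeta q'(\zeta)+(a-1)q(\zeta)\big)/a$ — one sees that the two real-part inequalities defining $\Psi_{2}[\Omega,q]$ are literally the two real-part inequalities defining $\Phi_{S}[\Omega,q]$. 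Hence the hypothesis $\phi\in\Phi_{S}[\Omega,q]$ translates into $\psi(r,s,t,u;z)\notin\Omega$ under the admissibility hypotheses, i.e. $\psi\in\Psi_{2}[\Omega,q]$.

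Finally, conditions \eqref{z16} supply precisely the two structural hypotheses Lemma \ref{t4} places on $q$ and on the auxiliary function, namely $\Re\big(\zeta q''(\zeta)/q'(\zeta)\big)\geq 0$ and a bound $|S_{a,c}f(z)/q'(\zeta)|\leq k$ controlling $zp'(z)$ through the linear relation \eqref{z11}. Since we have already established $\psi\big(p(z),zp'(z),z^{2}p''(z),z^{3}p'''(z);z\big)\in\Omega$ with $\psi\in\Psi_{2}[\Omega,q]$, Lemma \ref{t4} applies and yields $p(z)\prec q(z)$, that is, $S_{a+1,c}f(z)\prec q(z)$ on $\mathcal U$. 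I expect the only genuinely laborious part to be the bookkeeping that produces the $u/s$ identity and confirms the exact matching of the admissibility conditions of Definitions \ref{d1} and \ref{d2a}; once the substitution is set up, the remainder of the argument is purely formal.
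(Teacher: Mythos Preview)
Your proposal is correct and follows essentially the same approach as the paper: define $p(z)=S_{a+1,c}f(z)$, use the recurrence \eqref{z11} repeatedly to express the higher operators as differential polynomials in $p$, build the auxiliary $\psi$ via the linear change of variables $(\alpha,\beta,\gamma,\delta)$, verify that the admissibility conditions of Definition~\ref{d1} coincide with those of Definition~\ref{d2a} through the $t/s+1$ and $u/s$ identities, and then invoke Lemma~\ref{t4}. The paper's proof is organized identically, so there is nothing to add.
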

The next result is an extension of theorem  \ref{t6} to the case where the behavior of $q(z)$ on $\partial \mathcal{U}$ is not known.
\begin{corollary}\label{c1}
Let $\Omega \subset \mathbb{C}$ and let the function $q$ be univalent in $\mathcal{U}$ with $q(0)=0.$ Let $\phi \in \Phi_{S}[\Omega,q_{\rho}]$ for some $\rho \in (0,1),$ where $q_{\rho}(z)=q(\rho z).$ If the function $f \in \mathcal{A}$ and $q_{\rho}$ satisfy the following conditions
\begin{equation*}
\Re\left(\frac{\zeta q_{\rho}^{\prime\prime}(\zeta)}{q_{\rho}^{\prime}(\zeta)}\right)\geq 0,\qquad\left|\frac{S_{a,c}f(z)}{q_{\rho}^{\prime}(\zeta)}\right|\leq k\quad(z \in \mathcal{U},\zeta \in \partial \mathcal{U}\setminus E(q_{\rho})),
\end{equation*}
and
\begin{equation*}
\phi\left(S_{a+1,c}f(z),S_{a,c}f(z),S_{a-1,c}f(z),S_{a-2,c}f(z);z\right) \in \Omega,
\end{equation*}
then
\begin{equation*}
S_{a+1,c}f(z)\prec q(z)\qquad(z \in \mathcal{U}).
\end{equation*}
\begin{proof}
From Theorem \ref{t6}, then $S_{a+1,c}f(z)\prec q_{\rho}(z)$. The result asserted by Corollary \ref{c1} is now deduced from the following subordination property $q_{\rho}(z)\prec q(z)\qquad(z \in \mathcal{U}).$ 
\end{proof}
\end{corollary}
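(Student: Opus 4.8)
The plan is to derive Corollary \ref{c1} from Theorem \ref{t6} by running that theorem with the dilated comparison function $q_{\rho}(z)=q(\rho z)$, and then to transfer the conclusion from $q_{\rho}$ back to $q$ through a short subordination chain. So the whole argument splits into three steps: (i) check that $q_{\rho}$ is an admissible target, i.e. $q_{\rho}\in\mathcal{Q}_{0}$; (ii) apply Theorem \ref{t6} verbatim with $q$ replaced by $q_{\rho}$; (iii) use $q_{\rho}\prec q$ and transitivity.

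For step (i): since $\rho\in(0,1)$, the closed disc $\rho\,\overline{\mathcal{U}}$ is a compact subset of $\mathcal{U}$, so $q$ is analytic on a neighbourhood of $\rho\,\overline{\mathcal{U}}$; consequently $q_{\rho}$ is analytic on all of $\overline{\mathcal{U}}$ and hence $E(q_{\rho})=\varnothing$. Univalence of $q$ in $\mathcal{U}$ forces $q'\neq 0$ there, so $q_{\rho}'(\zeta)=\rho\,q'(\rho\zeta)\neq 0$ for $\zeta\in\partial\mathcal{U}$, and continuity on the compact circle gives $\min_{|\zeta|=1}|q_{\rho}'(\zeta)|>0$. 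Together with $q_{\rho}(0)=q(0)=0$ this puts $q_{\rho}\in\mathcal{Q}_{0}\cap\mathcal{H}_{0}$, which is exactly the standing requirement in Definition \ref{d1}, so the hypothesis $\phi\in\Phi_{S}[\Omega,q_{\rho}]$ is meaningful.

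For step (ii): the hypotheses needed to invoke Theorem \ref{t6} with comparison function $q_{\rho}$ are precisely $\phi\in\Phi_{S}[\Omega,q_{\rho}]$, the two pointwise inequalities $\Re\bigl(\zeta q_{\rho}''(\zeta)/q_{\rho}'(\zeta)\bigr)\geq 0$ and $\bigl|S_{a,c}f(z)/q_{\rho}'(\zeta)\bigr|\leq k$, and the containment $\phi\bigl(S_{a+1,c}f(z),S_{a,c}f(z),S_{a-1,c}f(z),S_{a-2,c}f(z);z\bigr)\in\Omega$ for $z\in\mathcal{U}$; all of these are literally assumed in the statement of Corollary \ref{c1}. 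Hence Theorem \ref{t6} yields $S_{a+1,c}f(z)\prec q_{\rho}(z)$ in $\mathcal{U}$. For step (iii), the dilation $\omega(z)=\rho z$ is a Schwarz function ($\omega(0)=0$, $|\omega(z)|=\rho|z|<1$) and $q_{\rho}=q\circ\omega$, so $q_{\rho}(z)\prec q(z)$; since the composition of Schwarz functions is again a Schwarz function, subordination is transitive, and $S_{a+1,c}f(z)\prec q_{\rho}(z)\prec q(z)$ gives the claim.

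There is no real obstacle here: the analytic content is entirely absorbed into Theorem \ref{t6}, and Corollary \ref{c1} is just the standard device for removing the boundary hypothesis on $q$. The only point that deserves a line of justification is step (i) — confirming $q_{\rho}\in\mathcal{Q}_{0}$ so that Theorem \ref{t6} genuinely applies — after which steps (ii) and (iii) are bookkeeping plus the elementary dilation-subordination fact.
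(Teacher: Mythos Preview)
Your proof is correct and follows exactly the same route as the paper's own proof: apply Theorem \ref{t6} with $q_{\rho}$ in place of $q$ to obtain $S_{a+1,c}f(z)\prec q_{\rho}(z)$, then pass to $q$ via the elementary subordination $q_{\rho}\prec q$. Your additional verification that $q_{\rho}\in\mathcal{Q}_{0}$ (step (i)) and the explicit Schwarz-function justification of $q_{\rho}\prec q$ (step (iii)) simply make explicit what the paper leaves tacit.
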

If $\Omega \neq \mathbb{C}$ is a simply connected domain, then $\Omega=h(\mathcal{U})$ for some conformal mapping $h(z)$ of $\mathcal{U}$ onto $\Omega.$ In this case, the class $\Phi_{S}[h(\mathcal{U}),q]$ is written  as $\Phi_{S}[h,q].$ The following result follows immediately as a consequence of Theorem  \ref{t6}.
\begin{theorem}\label{t7}
Let $\phi \in \Phi_{S}[h,q]$. If the function $f \in \mathcal{A}$ and $q \in \mathcal{Q}_{0}$ satisfy the following conditions:
\begin{equation}\label{z26}
\Re\left(\frac{\zeta q^{\prime\prime}(\zeta)}{q^{\prime}(\zeta)}\right)\geq 0, \quad \left|\frac{S_{a,c}f(z)}{q^{\prime}(\zeta)}\right|\leq k,
\end{equation}
and 
\begin{equation}\label{z27}
\phi(S_{a+1,c}f(z),S_{a,c}f(z),S_{a-1,c}f(z),S_{a-2,c}f(z);z)\prec h(z),
\end{equation}
then 
\begin{equation*}
S_{a+1,c}f(z) \prec q(z)\quad(z \in \mathcal{U}).
\end{equation*}
\end{theorem}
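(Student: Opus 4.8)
The plan is to observe that Theorem \ref{t7} is simply the reformulation of Theorem \ref{t6} in the language of subordination, obtained by specialising the set $\Omega$ to a simply connected domain presented as the image of a conformal map. First I would recall the convention introduced immediately before the statement: when $\Omega \ne \mathbb{C}$ is simply connected we may write $\Omega = h(\mathcal U)$ for a conformal mapping $h$ of $\mathcal U$ onto $\Omega$, and then $\Phi_S[h,q]$ denotes $\Phi_S[h(\mathcal U),q]$. Consequently the hypothesis $\phi \in \Phi_S[h,q]$ is literally the hypothesis $\phi \in \Phi_S[\Omega,q]$ of Theorem \ref{t6} for this particular $\Omega$, and the conditions \eqref{z26} on $q$ coincide verbatim with \eqref{z16}.

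Next I would translate the subordination hypothesis \eqref{z27} into the membership hypothesis \eqref{z17}. Recall from the preliminaries that $F \prec h$ in $\mathcal U$ implies $F(0)=h(0)$ and $F(\mathcal U) \subset h(\mathcal U)$. Applying this with $F(z)=\phi(S_{a+1,c}f(z),S_{a,c}f(z),S_{a-1,c}f(z),S_{a-2,c}f(z);z)$ gives $F(\mathcal U) \subset h(\mathcal U) = \Omega$, that is, $\left\{\phi(S_{a+1,c}f(z),S_{a,c}f(z),S_{a-1,c}f(z),S_{a-2,c}f(z);z):z \in \mathcal U\right\}\subset \Omega$, which is precisely \eqref{z17}.

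With these identifications all the hypotheses of Theorem \ref{t6} are in force, so its conclusion yields $S_{a+1,c}f(z) \prec q(z)$ for $z \in \mathcal U$, which is exactly the assertion of Theorem \ref{t7}. I do not expect any genuine obstacle here: the only point needing a moment's care is that we use the implication ``$F \prec h \Rightarrow F(\mathcal U)\subset h(\mathcal U)$'' in the correct direction, and since this implication holds for an arbitrary (not necessarily univalent) $h$, no additional subtlety arises. Thus Theorem \ref{t7} follows as an immediate corollary of Theorem \ref{t6}, as the surrounding text already indicates.
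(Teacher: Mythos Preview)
Your proposal is correct and matches the paper's approach exactly: the paper gives no separate proof for Theorem~\ref{t7} but simply declares it an immediate consequence of Theorem~\ref{t6} once $\Omega = h(\mathcal U)$, and your argument spells out precisely that reduction. The only addition you make is the explicit observation that $F\prec h \Rightarrow F(\mathcal U)\subset h(\mathcal U)$, which is the trivial translation step the paper leaves implicit.
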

The next result is an immediate consequence of Corollary \ref{c1}.
\begin{corollary}\label{c2}
Let $\Omega \subset \mathbb{C}$ and let the function $q$ be univalent in $\mathcal{U}$ with $q(0)=0.$ Let $\phi \in \Phi_{S}[h,q_{\rho}]$ for some $\rho \in (0,1),$ where $q_{\rho}(z)=q(\rho z).$ If the function $f \in \mathcal{A}$ and $q_{\rho}$ satisfy the following conditions
\begin{equation*}
\Re\left(\frac{\zeta q_{\rho}^{\prime\prime}(\zeta)}{q_{\rho}^{\prime}(\zeta)}\right)\geq 0,\qquad \left|\frac{S_{a,c}f(z)}{q_{\rho}^{\prime}(\zeta)}\right|\leq k\qquad(z \in \mathcal{U},\zeta \in \partial \mathcal{U}\setminus E(q_{\rho})),
\end{equation*}
and
\begin{equation*}
\phi(S_{a+1,c}f(z),S_{a,c}f(z),S_{a-1,c}f(z),S_{a-2,c}f(z);z) \prec h(z),
\end{equation*}
then
\begin{equation*}
S_{a+1,c}f(z)\prec q(z)\qquad(z \in \mathcal{U}).
\end{equation*}
\end{corollary}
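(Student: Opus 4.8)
The plan is to deduce this corollary directly from Corollary \ref{c1} by rephrasing the differential subordination as a set inclusion. First I would invoke the convention fixed just before Theorem \ref{t7}: since $\Omega \subset \mathbb{C}$ is (being written as $h(\mathcal{U})$) a simply connected domain, $h$ denotes the conformal map of $\mathcal{U}$ onto $\Omega = h(\mathcal{U})$, and the hypothesis $\phi \in \Phi_{S}[h,q_{\rho}]$ is by definition the same as $\phi \in \Phi_{S}[h(\mathcal{U}),q_{\rho}] = \Phi_{S}[\Omega,q_{\rho}]$. Because $h$ is univalent on $\mathcal{U}$, the subordination hypothesis $\phi(S_{a+1,c}f(z),S_{a,c}f(z),S_{a-1,c}f(z),S_{a-2,c}f(z);z) \prec h(z)$ is equivalent to the inclusion $\phi(S_{a+1,c}f(z),S_{a,c}f(z),S_{a-1,c}f(z),S_{a-2,c}f(z);z) \in h(\mathcal{U}) = \Omega$ for every $z \in \mathcal{U}$.

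Next I would check that the remaining hypotheses match those of Corollary \ref{c1}. The two regularity conditions imposed on $q_{\rho}$ here, namely $\Re\bigl(\zeta q_{\rho}^{\prime\prime}(\zeta)/q_{\rho}^{\prime}(\zeta)\bigr) \geq 0$ and $\bigl|S_{a,c}f(z)/q_{\rho}^{\prime}(\zeta)\bigr| \leq k$ for $z \in \mathcal{U}$ and $\zeta \in \partial\mathcal{U}\setminus E(q_{\rho})$, are verbatim the conditions required in Corollary \ref{c1}, while $q$ is univalent in $\mathcal{U}$ with $q(0)=0$ exactly as assumed there. Thus, with the choice of domain $\Omega = h(\mathcal{U})$, every hypothesis of Corollary \ref{c1} is satisfied by $\phi$, $f$, $q$, and $q_{\rho}$.

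Applying Corollary \ref{c1} then gives $S_{a+1,c}f(z) \prec q(z)$ for $z \in \mathcal{U}$, which is precisely the assertion. I do not expect any genuine obstacle: the proof is a one-line reduction, and the only point deserving a word of care is the translation between the subordination statement $\phi(\cdots) \prec h$ and the membership statement $\phi(\cdots) \in \Omega$, which is legitimate because $h$ is a conformal (hence univalent) mapping of $\mathcal{U}$ onto $\Omega$, so that subordination to $h$ coincides with taking values in $h(\mathcal{U})$.
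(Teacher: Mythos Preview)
Your proposal is correct and follows exactly the route the paper indicates: the paper gives no explicit proof of this corollary but simply records it as ``an immediate consequence of Corollary~\ref{c1},'' and your argument supplies precisely that reduction by reading $\Phi_{S}[h,q_{\rho}]$ as $\Phi_{S}[h(\mathcal{U}),q_{\rho}]$ and passing from the subordination $\phi(\cdots)\prec h$ to the containment $\phi(\cdots)\in h(\mathcal{U})=\Omega$. One small remark: for the direction you actually need (subordination $\Rightarrow$ image containment) univalence of $h$ is not required, so your ``equivalent'' could safely be weakened to ``implies''; but this does not affect the validity of the argument.
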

The following result  yields the best dominant of the differential subordination (\ref{z27}).
\begin{theorem}\label{t8}
Let the function $h$ be univalent in $\mathcal{U}$ and let $\phi : \mathbb{C}^{4}\times \mathcal{U}\longrightarrow \mathbb{C}$ and $\psi$ be given by (\ref{z24}). Suppose that the differential equation
\begin{equation}\label{z28}
\psi(q(z),zq^{\prime}(z),z^{2}q^{\prime\prime}(z),z^{3}
q^{\prime\prime\prime}(z);z)=h(z),
\end{equation}
has a solution $q(z)$ with $q(0)=0,$ which satisfy condition (\ref{z16}). If the function $f \in \mathcal{A}$ satisfies condition (\ref{z27}) and 
\begin{equation*}
\phi(S_{a+1,c}f(z),S_{a,c}f(z),S_{a-1,c}f(z),S_{a-2,c}f(z);z)
\end{equation*}
is analytic in $\mathcal{U},$ then
\begin{equation*}
S_{a+1,c}f(z)\prec q(z)
\end{equation*}
and $q(z)$ is the best dominant.
\begin{proof}
From Theorem \ref{t6}, we have $q$ is a dominant of (\ref{z27}). Since $q$ satisfies (\ref{z28}), it is also a solution of (\ref{z27}) and therefore $q$ will be dominated by all dominants. Hence $q$ is the best dominant.
This completes the proof of theorem.
\end{proof}
\end{theorem}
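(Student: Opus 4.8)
The statement is the standard ``best dominant'' companion to Theorem~\ref{t6}, so the plan is to bootstrap from the dominance results already established rather than to start afresh. The crucial observation is that the prescribed solution $q$ of the differential equation (\ref{z28}) is itself a \emph{solution}, in the sense of Definition~\ref{d3a}, of the third-order differential subordination underlying (\ref{z27}). Indeed, substituting $p=q$ into the transform $\psi$ of (\ref{z24}) and invoking the identity (\ref{z25}) gives
\[
\psi\bigl(q(z),zq'(z),z^{2}q''(z),z^{3}q'''(z);z\bigr)=h(z)\prec h(z),
\]
so $q$ trivially satisfies that subordination. The argument then splits into two halves: (i) $q$ is a dominant of (\ref{z27}); (ii) $q$ is subordinate to every dominant of (\ref{z27}).

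For part (i) I would apply Theorem~\ref{t7} (equivalently Theorem~\ref{t6} with $\Omega=h(\mathcal U)$). Its hypotheses are that $f\in\mathcal A$, that $q\in\mathcal Q_{0}$ satisfies (\ref{z16}), and that $\phi\in\Phi_{S}[h,q]$; the first two are granted, and the admissibility $\phi\in\Phi_{S}[h,q]$ is exactly where the assumption that $q$ solves (\ref{z28}) enters --- one verifies that a univalent solution of the differential equation forces the transform $\psi$ into $\Psi_{2}[h,q]$, using the expressions for $t/s+1$ and $u/s$ recorded just above (\ref{z24}). With this in hand, Theorem~\ref{t7} gives $S_{a+1,c}f(z)\prec q(z)$; since this holds for every admissible $f$ satisfying (\ref{z27}), the function $q$ is a dominant.

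For part (ii), by the very definition of a dominant any dominant $\widehat q$ of (\ref{z27}) satisfies $P(z)\prec\widehat q(z)$ for every solution $P$ of that subordination; applying this to the solution $P=q$ exhibited above yields $q\prec\widehat q$. Together with part (i), $q$ is a dominant that is subordinate to all dominants, i.e.\ $q$ is the best dominant, and it is unique up to rotation of $\mathcal U$ by the remark following Definition~\ref{d5}.

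I expect the only real obstacle to be part (i): checking that solvability of (\ref{z28}) really does place $\phi$ in $\Phi_{S}[h,q]$ (so that Theorem~\ref{t7} is applicable) and that the given ``solution with $q(0)=0$ satisfying (\ref{z16})'' genuinely belongs to $\mathcal Q_{0}$. The hypothesis that $\phi(S_{a+1,c}f(z),S_{a,c}f(z),S_{a-1,c}f(z),S_{a-2,c}f(z);z)$ is analytic in $\mathcal U$ is what legitimizes the subordination appearing in (\ref{z27}) and hence the comparison in part (i); beyond that, everything reduces to the bookkeeping already carried out in the proof of Theorem~\ref{t6}.
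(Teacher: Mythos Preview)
Your proposal is correct and follows essentially the same two-step argument as the paper: first invoke Theorem~\ref{t6} (equivalently Theorem~\ref{t7}) to conclude that $q$ is a dominant of (\ref{z27}), then observe that because $q$ solves (\ref{z28}) it is itself a solution of the subordination and hence is dominated by every dominant, making it the best dominant. The paper's proof is terser and does not flag the admissibility or $\mathcal{Q}_{0}$-membership issues you raise, but the underlying strategy is identical.
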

In view of Definition \ref{d1}, and in the special case $q(z)=Mz,~~ M>0,$ the class of admissible functions $\Phi_{S}[\Omega,q],$ denoted by  $\Phi_{S}[\Omega,M],$ is expressed as follows.
\begin{defn}\label{d15}
Let $\Omega$ be a set in $\mathbb{C}$  and $M>0$. The class of admissible function $\Phi_{S}[\Omega,M]$ consists of those functions $\phi:\mathbb{C}^{4}\times \mathcal{U}\longrightarrow\mathbb{C}$ such that
\begin{multline}\label{z29}
\phi \bigg(Me^{i\theta},\frac{(k+a-1)Me^{i\theta}}{a},\frac{L+[(2k+a-2)(a-1)]Me^{i\theta}}{a(a-1)},\\ \frac{N+3(a-1)L+[(a-1)(a-2)(3k+a-3)]Me^{i\theta}}{a(a-1)(a-2)};z\bigg) \not\in \Omega,
\end{multline}
where $z \in \mathcal{U},\Re(Le^{-i\theta})\geq (k-1)kM,$ and $\Re(Ne^{-i\theta})\geq 0$ for all $\theta \in \mathbb{R}$ and $k \geq 2$. 
\end{defn}
\begin{corollary}\label{c3}
Let $\phi \in \Phi_{S}[\Omega,M].$ If the function $f \in \mathcal{A}$ satisfies 
\begin{equation*}
\left|S_{a,c}f(z)\right|\leq kM\qquad(z \in \mathcal{U},k\geq 2; M>0)
\end{equation*}
and 
\begin{equation*}
\phi(S_{a+1,c}f(z),S_{a,c}f(z),S_{a-1,c}f(z),S_{a-2,c}f(z);z)\in \Omega,
\end{equation*}
then
\begin{equation*}
\left|S_{a+1,c}f(z)\right|<M.
\end{equation*}
\end{corollary}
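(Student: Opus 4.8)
The plan is to obtain Corollary \ref{c3} as the specialization $q(z)=Mz$ of Theorem \ref{t6}. First I would record the elementary properties of the choice $q(z)=Mz$ with $M>0$: this function is entire and univalent on $\overline{\mathcal U}$, $q(0)=0$, $E(q)=\emptyset$, and $q'(z)\equiv M$, so $\min_{\zeta\in\partial\mathcal U}|q'(\zeta)|=M>0$; hence $q\in\mathcal Q_0\cap\mathcal H_0$. Since $q''\equiv q'''\equiv 0$, the first part of condition (\ref{z16}) holds trivially because $\Re(\zeta q''(\zeta)/q'(\zeta))=0\ge 0$, and the second part $|S_{a,c}f(z)/q'(\zeta)|\le k$ reduces to $|S_{a,c}f(z)|\le kM$, which is precisely the hypothesis of the corollary.

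Next I would verify that, for this $q$, the class $\Phi_S[\Omega,M]$ of Definition \ref{d15} is exactly $\Phi_S[\Omega,q]$ of Definition \ref{d1}. Writing $\zeta=e^{i\theta}\in\partial\mathcal U$ and inserting $q(\zeta)=Me^{i\theta}$, $q'(\zeta)=M$, $q''(\zeta)=q'''(\zeta)=0$ into Definition \ref{d1} gives $\alpha=Me^{i\theta}$ and $\beta=(k+a-1)Me^{i\theta}/a$; denoting by $L$ and $N$ the remaining free arguments (the analogues of $t$ and $u$), the formulas for $\gamma$ and $\delta$ in Definition \ref{d1}, which come from (\ref{z19})--(\ref{z23}), produce exactly the third and fourth arguments of $\phi$ appearing in (\ref{z29}). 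Moreover, with $s=k\zeta q'(\zeta)=kMe^{i\theta}$ the inequality $\Re(t/s+1)\ge k\Re(\zeta q''(\zeta)/q'(\zeta)+1)=k$ is equivalent to $\Re(Le^{-i\theta})\ge(k-1)kM$, and $\Re(u/s)\ge k^2\Re(\zeta^2 q'''(\zeta)/q'(\zeta))=0$ is equivalent to $\Re(Ne^{-i\theta})\ge 0$. Hence the admissibility requirement $\phi(\alpha,\beta,\gamma,\delta;z)\notin\Omega$ of Definition \ref{d1} is literally condition (\ref{z29}), so $\phi\in\Phi_S[\Omega,M]$ precisely when $\phi\in\Phi_S[\Omega,Mz]$.

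With these identifications the hypotheses of Theorem \ref{t6} are all in place: $\phi\in\Phi_S[\Omega,q]$ with $q(z)=Mz\in\mathcal Q_0$, condition (\ref{z16}) holds as just observed, and the inclusion (\ref{z17}) is the assumed membership $\phi(S_{a+1,c}f(z),S_{a,c}f(z),S_{a-1,c}f(z),S_{a-2,c}f(z);z)\in\Omega$ for all $z\in\mathcal U$. Theorem \ref{t6} then gives $S_{a+1,c}f(z)\prec Mz$ in $\mathcal U$. Finally I would translate this: as $Mz$ is univalent and vanishes at the origin, the subordination means $S_{a+1,c}f(z)=M\omega(z)$ for some Schwarz function $\omega$, so $|S_{a+1,c}f(z)|=M|\omega(z)|<M$ for $z\in\mathcal U$, which is the assertion.

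All the steps are routine once $q(z)=Mz$ is fixed; the only place demanding attention is the bookkeeping in the second paragraph, namely checking term by term that the reparametrized arguments in (\ref{z29}) match $\beta,\gamma,\delta$ obtained from (\ref{z19})--(\ref{z21}) and that the two real-part inequalities transcribe correctly. I do not anticipate a genuine obstacle there, since for the linear function $q(z)=Mz$ every derivative-of-$q$ term collapses to a constant or to zero.
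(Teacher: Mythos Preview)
Your proposal is correct and follows exactly the approach intended by the paper: Corollary \ref{c3} is stated immediately after Definition \ref{d15} as the specialization $q(z)=Mz$ of Theorem \ref{t6}, and the paper gives no separate proof. Your detailed verification that Definition \ref{d15} is precisely Definition \ref{d1} with $q(z)=Mz$, that condition (\ref{z16}) reduces to $|S_{a,c}f(z)|\le kM$, and that the subordination $S_{a+1,c}f(z)\prec Mz$ yields $|S_{a+1,c}f(z)|<M$, simply fills in the routine bookkeeping the paper omits.
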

 In this special case $\Omega=q(\mathcal{U})=\{w:|w|<M\},$ the class $\Phi_{S}[\Omega,M]$ is simply denoted by $\Phi_{S}[M]$. Corollary \ref{c3} can now be written in the following form:
 \begin{corollary}\label{c4}
 Let $\phi \in \Phi_{S}[M].$ If the function $f \in \mathcal{A}$ satisfies 
\begin{equation*}
\left|S_{a,c}f(z)\right|\leq kM\qquad(z \in \mathcal{U},k\geq 2;M>0),
\end{equation*}
and 
\begin{equation*}
\left|\phi(S_{a+1,c}f(z),S_{a,c}f(z),S_{a-1,c}f(z),S_{a-2,c}f(z);z \right|<M,
\end{equation*}
then
\begin{equation*}
\left|S_{a+1,c}f(z)\right|< M.
\end{equation*}
\end{corollary}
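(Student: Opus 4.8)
The plan is to obtain Corollary~\ref{c4} as a straightforward reformulation of Corollary~\ref{c3}, obtained by specializing the target set $\Omega$ to an origin-centred disc. Recall from the discussion preceding the corollary that, by definition, $\Phi_{S}[M]=\Phi_{S}[\Omega,M]$ with the particular choice $\Omega=q(\mathcal U)=\{w\in\mathbb C:|w|<M\}$, where $q(z)=Mz$. Since $q(z)=Mz$ is univalent in $\mathcal U$ with $q(0)=0$, $q'(z)\equiv M\neq0$, and $E(q)=\emptyset$, we have $q\in\mathcal Q_{0}$ and $\Re\big(\zeta q''(\zeta)/q'(\zeta)\big)=0\ge0$; moreover $|S_{a,c}f(z)/q'(\zeta)|=|S_{a,c}f(z)|/M$, so the growth hypothesis $|S_{a,c}f(z)|\le kM$ is exactly the condition $|S_{a,c}f(z)/q'(\zeta)|\le k$ required in Corollary~\ref{c3}. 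Thus all the structural hypotheses needed to invoke Corollary~\ref{c3} with this $q$ are automatically in place once $f\in\mathcal A$ satisfies $|S_{a,c}f(z)|\le kM$.

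First I would rewrite the functional hypothesis. Because $\Omega=\{w:|w|<M\}$ is precisely the open disc of radius $M$ about the origin, the membership condition
$\phi\big(S_{a+1,c}f(z),S_{a,c}f(z),S_{a-1,c}f(z),S_{a-2,c}f(z);z\big)\in\Omega$
appearing in Corollary~\ref{c3} is literally the inequality
$\big|\phi\big(S_{a+1,c}f(z),S_{a,c}f(z),S_{a-1,c}f(z),S_{a-2,c}f(z);z\big)\big|<M$,
which is exactly the assumption imposed in Corollary~\ref{c4}. Hence every hypothesis of Corollary~\ref{c3} holds for the function $f$ and for $\Omega=\{w:|w|<M\}$.

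Applying Corollary~\ref{c3} then gives $|S_{a+1,c}f(z)|<M$ for all $z\in\mathcal U$, using the same disc-versus-modulus identification for the conclusion; this is precisely the assertion of Corollary~\ref{c4}. I do not anticipate any genuine obstacle here: the substantive work — translating the admissibility conditions of Definition~\ref{d1} into those of $\Psi_{2}[\Omega,q]$ in Definition~\ref{d2a} and invoking the fundamental third-order subordination Lemma~\ref{t4} — has already been carried out in the proof of Theorem~\ref{t6} and passed down to Corollary~\ref{c3} (and to the disc-specific Definition~\ref{d15} of $\Phi_{S}[\Omega,M]$). The only points that warrant a careful check are that the identification ``$w\in\Omega\iff|w|<M$'' is applied consistently to both the hypothesis on $\phi$ and the conclusion on $S_{a+1,c}f$, and that $\Phi_{S}[M]$ does coincide with $\Phi_{S}[q(\mathcal U),M]$ for $q(z)=Mz$; both are immediate from the definitions stated just before the corollary.
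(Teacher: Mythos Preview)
Your proposal is correct and follows exactly the paper's approach: the paper presents Corollary~\ref{c4} simply as Corollary~\ref{c3} ``written in the following form'' under the special choice $\Omega=q(\mathcal U)=\{w:|w|<M\}$, with $\Phi_{S}[M]:=\Phi_{S}[\Omega,M]$, and gives no further argument. Your write-up merely spells out this identification (and the underlying verification that $q(z)=Mz$ satisfies the required conditions), which is entirely in line with, and somewhat more detailed than, what the paper does.
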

\begin{corollary}\label{c15}
Let $\Re(a)\geq \frac{1-k}{2},\quad k \geq 2$ and $M>0.$ If $f \in \mathcal{A}$
 satisfies 
 $$|S_{a,c}f(z)| \leq M,$$
 then
 $$|S_{a+1,c}f(z)|< M.$$
 \begin{proof}
 This follows from Corollary \ref{c4}  by taking $\phi(\alpha,\beta,\gamma,\delta;z)=\beta=\frac{k+a-1}{a}Me^{i\theta}.$ 
\end{proof}
\end{corollary}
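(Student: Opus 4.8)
The plan is to deduce this from Corollary \ref{c4} by choosing the simplest possible admissible function, namely the projection onto the second coordinate. Concretely, I would put $\phi(\alpha,\beta,\gamma,\delta;z):=\beta$. With this choice the composite appearing in Corollary \ref{c4} collapses to $\phi\bigl(S_{a+1,c}f(z),S_{a,c}f(z),S_{a-1,c}f(z),S_{a-2,c}f(z);z\bigr)=S_{a,c}f(z)$, so the two hypotheses of Corollary \ref{c4} --- the size bound $|S_{a,c}f(z)|\le kM$ and the bound $|\phi(\cdots)|<M$ --- would both follow from the single assumption $|S_{a,c}f(z)|\le M$: the first because $k\ge 2$, and the second after the preliminary remark that $S_{a,c}f(z)=z+\cdots$ is a non-constant analytic function on $\mathcal U$, so by the maximum modulus principle the assumed non-strict bound $|S_{a,c}f(z)|\le M$ actually forces the strict bound $|S_{a,c}f(z)|<M$ throughout $\mathcal U$. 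I would record this upgrade as the opening step of the proof.

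The substantive point is to check that this $\phi$ lies in $\Phi_{S}[M]=\Phi_{S}[\{w:|w|<M\},M]$ in the sense of Definition \ref{d15}. Since $\phi$ depends only on its second slot, evaluating it on the data prescribed in that definition gives $\phi=\frac{(k+a-1)Me^{i\theta}}{a}$ (so the auxiliary quantities $L$, $N$ and their $\Re$-conditions play no role), and the admissibility requirement $\phi\notin\{w:|w|<M\}$ becomes $\bigl|\tfrac{(k+a-1)Me^{i\theta}}{a}\bigr|\ge M$, i.e.\ $|k+a-1|\ge|a|$. Writing $a=x+iy$ and squaring, this reads $(k-1)\bigl((k-1)+2x\bigr)\ge 0$; since $k\ge 2$ makes the first factor positive, it is equivalent to $x\ge\tfrac{1-k}{2}$, which is precisely the standing hypothesis $\Re(a)\ge\tfrac{1-k}{2}$. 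Hence $\phi\in\Phi_{S}[M]$.

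With $\phi\in\Phi_{S}[M]$ established and both size hypotheses of Corollary \ref{c4} in hand, that corollary yields $|S_{a+1,c}f(z)|<M$, as claimed. The only step calling for any care is the elementary modulus equivalence $|k+a-1|\ge|a|\iff\Re(a)\ge\tfrac{1-k}{2}$ --- making sure the half-plane hypothesis is exactly (not merely sufficient for) the admissibility condition --- together with the small bookkeeping observation that one must first pass from the non-strict bound $|S_{a,c}f(z)|\le M$ to the strict bound before invoking Corollary \ref{c4}; beyond this there is no genuine obstacle.
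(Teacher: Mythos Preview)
Your approach is exactly the paper's: take $\phi(\alpha,\beta,\gamma,\delta;z)=\beta$ and invoke Corollary~\ref{c4}, noting that the admissibility condition $\bigl|\tfrac{k+a-1}{a}\bigr|\ge 1$ is equivalent to $\Re(a)\ge\tfrac{1-k}{2}$. Your write-up is in fact more complete than the paper's one-line proof, since you spell out this equivalence and also flag the maximum-modulus step needed to pass from the non-strict hypothesis $|S_{a,c}f(z)|\le M$ to the strict bound required by Corollary~\ref{c4}.
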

\begin{remark}
For $f(z)=\frac{z}{1-z}$ in Corollary \ref{c15}, we have 
\begin{equation*}
|U_{a,c}(z)|<M\Longrightarrow |U_{a+1,c}(z)|<M,
\end{equation*}
which is a generalization of result given by Prajapat \cite{prajapat}.
\end{remark}
\begin{corollary}\label{c16}
Let $0 \neq a \in \mathbb{C},k\geq 2$ and $M>0.$ If $f \in \mathcal{A}$ satisfies 
$$|S_{a,c}f(z)|\leq kM,$$
and
$$|S_{a,c}f(z)-S_{a+1,c}f(z)|<\frac{M}{|a|},$$
then
$$|S_{a+1,c}f(z)|<M.$$
\begin{proof}
Let $\phi(\alpha,\beta,\gamma,\delta;z)=\beta-\alpha $ and $\Omega=h(\mathcal{U}),$ where $h(z)=\frac{Mz}{a},~~M>0.$ In order to use Corollary \ref{c3}, we need to show that $\phi \in \Phi_{S}[\Omega,M],$ that is, the admissibility condition (\ref{z29}) is satisfied.  This follows since
\begin{equation*}
|\phi(\alpha,\beta,\gamma,\delta;z)|=\left|\frac{(k-1)Me^{i\theta}}{a}\right|\geq \frac{M}{|a|},
\end{equation*}
whenever $z \in \mathcal{U}, \theta \in \mathbb{R}$ and $k \geq 2.$ The required result now follows from Corollary \ref{c3}.

Theorem \ref{t8}  shows that the result is sharp. The differential equation $z q^{\prime}(z)=Mz$ has a univalent solution $q(z)=Mz.$ It follows from Theorem \ref{t8}  that $q(z)=Mz$ is the best dominant.
\end{proof}
\end{corollary}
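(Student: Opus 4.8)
The plan is to obtain Corollary~\ref{c16} as a direct specialisation of Corollary~\ref{c3}, so that the whole argument reduces to choosing the right admissible function and checking its admissibility. First I would set
$\phi(\alpha,\beta,\gamma,\delta;z)=\beta-\alpha$,
a function that ignores its third and fourth arguments, and take $\Omega=h(\mathcal{U})$ with the linear map $h(z)=Mz/a$, so that $\Omega=\{w\in\mathbb{C}:|w|<M/|a|\}$. With these choices the quantity appearing in Corollary~\ref{c3}, namely $\phi(S_{a+1,c}f(z),S_{a,c}f(z),S_{a-1,c}f(z),S_{a-2,c}f(z);z)=S_{a,c}f(z)-S_{a+1,c}f(z)$, lies in $\Omega$ precisely when the second hypothesis $|S_{a,c}f(z)-S_{a+1,c}f(z)|<M/|a|$ holds, while the first hypothesis $|S_{a,c}f(z)|\le kM$ is exactly the size bound required there. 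Hence the only nontrivial point is to verify that $\phi\in\Phi_{S}[\Omega,M]$.

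For that I would test the admissibility condition~\eqref{z29} of Definition~\ref{d15}: on the relevant boundary data $\alpha=Me^{i\theta}$ and $\beta=(k+a-1)Me^{i\theta}/a$ (the values of $L$ and $N$ are immaterial, since $\phi$ does not see $\gamma,\delta$), one computes
$\phi=\beta-\alpha=\dfrac{(k+a-1)-a}{a}\,Me^{i\theta}=\dfrac{k-1}{a}\,Me^{i\theta}$,
so that $|\phi|=(k-1)M/|a|\ge M/|a|$ because $k\ge2$. Thus $\phi\notin\Omega$ for every $\theta\in\mathbb{R}$ and $k\ge 2$, which is exactly~\eqref{z29}; therefore $\phi\in\Phi_{S}[\Omega,M]$, and Corollary~\ref{c3} yields $|S_{a+1,c}f(z)|<M$ at once.

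There is essentially no analytic obstacle here: all the heavy machinery (reduction to the third-order subordination theory, Definitions~\ref{d1} and~\ref{d15}, and the passage through Corollary~\ref{c3}) has already been set up, and the short computation above is the only thing that remains. The one place where a little care is needed is bookkeeping — matching the slots of $\phi=\beta-\alpha$ with the ordering $(S_{a+1,c}f,S_{a,c}f,\dots)$ used in Corollary~\ref{c3}, and checking that the strict inequality in the hypothesis is compatible with the open set $\Omega$. Finally, to record that the bound $M$ is best possible I would invoke Theorem~\ref{t8}: with $\psi$ built from this $\phi$ the equation~\eqref{z28} collapses to $zq'(z)=Mz$, whose univalent solution is $q(z)=Mz$, so $q(z)=Mz$ is the best dominant and $M$ cannot be decreased.
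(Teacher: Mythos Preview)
Your proof is correct and follows essentially the same route as the paper: you choose $\phi(\alpha,\beta,\gamma,\delta;z)=\beta-\alpha$, take $\Omega=\{w:|w|<M/|a|\}$, verify the admissibility condition~\eqref{z29} via $|\phi|=(k-1)M/|a|\ge M/|a|$, apply Corollary~\ref{c3}, and then invoke Theorem~\ref{t8} for sharpness. If anything, your write-up is slightly more explicit than the paper's in matching the slots of $\phi$ and identifying $\Omega$ as an open disk.
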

\begin{example} For $p=\pm 1/2, ~b=1$ and $c=-1$, we have 
$
U_{2,-1}(z)=zU_{1/2,1,-1}(z)=2(\cosh(\sqrt{z})-1)$ \quad and \quad $U_{1,-1}(z)=zU_{-1/2,1,-1}(z)=\sqrt{z}\sinh\sqrt{z},
$
where $U_{p,b,c}$ is given by (\ref{z9}). Further more, taking $f(z)=\frac{z}{1-z}$ in Corollary \ref{c16}, we get
\begin{equation*}
\left|\sqrt{z}\sinh\sqrt{z}-2(\cosh(\sqrt{z})-1)\right|<M\Rightarrow \left|(\cosh(\sqrt{z})-1)\right|<\frac{M}{2}.
\end{equation*}
\end{example}
\begin{defn}\label{d3}
Let $\Omega$ be a set in $\mathbb{C}, q \in \mathcal{Q}_{0} \cap \mathcal{H}_{0}.$ The class of admissible functions $\Phi_{S,1}[\Omega,q]$ consists of those function $\phi:\mathbb{C}^{4}\times \mathcal{U}\longrightarrow \mathbb{C}$ that satisfy the following admissibility condition
\begin{equation*}
 \phi(\alpha,\beta,\gamma,\delta;z) \not\in \Omega
 \end{equation*}
whenever
\begin{equation*}
\alpha=q(\zeta),\beta=\frac{k\zeta q^{\prime}(\zeta)+aq(\zeta)}{a}, 
\end{equation*}
\begin{equation*}
\Re \left(\frac{(a-1)(\gamma-\alpha)}{\beta-\alpha}+(1-2a)
\right)\geq k \Re\left(\frac{\zeta q^{\prime\prime}(\zeta)}{q^{\prime}(\zeta)}+1\right),
\end{equation*}
and 
\begin{eqnarray*}
&&\Re \left(\frac{(a-1)(a-2)(\delta-\alpha)-3a(a-1)(\gamma-2\alpha+\beta)}{\beta-\alpha}+6a^{2}\right)\geq k^{2} \Re\left(\frac{\zeta^{2} q^{\prime\prime\prime}(\zeta)}{q^{\prime}(\zeta)}\right),
\end{eqnarray*}
where $z \in \mathcal{U},\zeta \in \partial \mathcal{U} \setminus E(q),$ and $k\geq 2.$
\end{defn}
\begin{theorem}\label{t9}
Let $\phi \in \Phi_{S,1}[\Omega,q].$ If the function $f \in \mathcal{A}$ and $q \in \mathcal{Q}_{0}$ satisfy the following conditions:
\begin{equation}\label{z30}
\Re\left(\frac{\zeta q^{\prime\prime}(\zeta)}{q^{\prime}(\zeta)}\right)\geq 0,\quad \left|\frac{S_{a,c}f(z)}{zq^{\prime}(\zeta)}\right|\leq k,
\end{equation}
and 
\begin{equation}\label{z31}
\left\{\phi \left(\frac{{S}_{a+1,c}f(z)}{z},\frac{S_{a,c}f(z)}{z},\frac{S_{a-1,c}f(z)}{z},\frac{S_{a-2,c}f(z)}{z};z \right):z \in \mathcal{U}\right\}\subset \Omega,
\end{equation}
then 
\begin{equation*}
\frac{S_{a+1,c}f(z)}{z} \prec q(z)\qquad(z \in \mathcal{U}).
\end{equation*}
\begin{proof}
Define the analytic function $p(z)$ in $\mathcal{U}$ by
\begin{equation}\label{z32}
p(z)=\frac{S_{a+1,c}f(z)}{z}.
\end{equation}
From equation (\ref{z11}) and (\ref{z32}), we have
\begin{equation}\label{z33}
\frac{S_{a,c}f(z)}{z}=\frac{zp^{\prime}(z)+ap(z)}{a}.
\end{equation}
Similar arguments, yields
\begin{equation}\label{z34}
\frac{S_{a-1,c}f(z)}{z}=\frac{z^{2}p^{\prime\prime}(z)+2azp^{\prime}(z)+a(a-1)p(z)}{a(a-1)}
\end{equation}
and
\begin{equation}\label{z35}
\frac{S_{a-2,c}f(z)}{z}=\frac{z^{3}p^{\prime\prime\prime}(z)+3az^{2}p^{\prime\prime}(z)+3a(a-1)zp^{\prime}(z)+a(a-1)(a-2)p(z)}{a(a-1)(a-2)}.
\end{equation}
Define the transformation from $\mathbb{C}^{4}$ to $\mathbb{C}$ by
\begin{equation*}
\alpha(r,s,t,u)=r,\qquad \beta(r,s,t,u)=\frac{s+ar}{a},
\end{equation*}
\begin{equation}\label{z36}
\gamma(r,s,t,u)=\frac{t+2as+a(a-1)r}{a(a-1)},
\end{equation}
and
\begin{equation}\label{z37}
\delta(r,s,t,u)=\frac{u+3at+3a(a-1)s+ra(a-1)(a-2)}{a(a-1)(a-2)}.
\end{equation}
Let
\begin{multline}\label{z38}
\psi(r,s,t,u)=\phi(\alpha,\beta,\gamma,\delta;z)=\phi \bigg(r,\frac{s+ar}{a},\frac{t+2as+a(a-1)r}{a(a-1)},\\ \frac{u+3at+3a(a-1)s+a(a-1)(a-2)r}{a(a-1)(a-2)};z\bigg).
\end{multline}
The proof will make use of  Lemma \ref{t4}. Using equations (\ref{z32}) to (\ref{z35})  and from (\ref{z38}), we have
\begin{equation}\label{z39}
\psi \left(p(z),zp^{\prime}(z),z^{2}p^{\prime\prime}(z),z^{3}p^{\prime\prime\prime}(z);z\right)=\phi \left(\frac{{S}_{a+1,c}f(z)}{z},\frac{S_{a,c}f(z)}{z},\frac{S_{a-1,c}f(z)}{z},\frac{S_{a-2,c}f(z)}{z};z\right). 
\end{equation} 
Hence, (\ref{z31}) becomes
\begin{equation*}
\psi \left(p(z),zp^{\prime}(z),z^{2}p^{\prime\prime}(z),z^{3}p^{\prime\prime\prime}(z);z\right) \in \Omega.
\end{equation*}
Note that
\begin{equation*}
\frac{t}{s}+1=\frac{(a-1)(\gamma-\alpha)+a^{2}v}{\beta-\alpha}+(1-2a)
\end{equation*}
and
\begin{eqnarray*}
\frac{u}{s}=\frac{(a-1)(a-2)(\delta-\alpha)-3a(a-1)(\gamma-2\alpha+\beta)}{\beta-\alpha}+6a^{2}.
\end{eqnarray*}
Thus, the admissibility condition for $\phi \in \Phi_{S,1}[\Omega,q]$ in Definition \ref{d3} is equivalent to the admissibility condition for $\psi \in \Psi_{2}[\Omega,q]$ as given in Definition  \ref{d2a} with $n=2.$ Therefore, by using (\ref{z30}) and Lemma \ref{t4}, we have
\begin{equation*}
\frac{S_{a+1,c}f(z)}{z}\prec q(z).
\end{equation*}
This completes the proof of theorem.
\end{proof}
\end{theorem}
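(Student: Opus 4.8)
The plan is to mirror exactly the structure of the proof of Theorem~\ref{t6}, replacing the operator $S_{a+1,c}f(z)$ by its normalized variant $S_{a+1,c}f(z)/z$, so that the whole argument is again reduced to an application of Lemma~\ref{t4}. First I would set $p(z)=S_{a+1,c}f(z)/z$ as in \eqref{z32}; since $S_{a+1,c}f(z)\in\mathcal A$ has the Taylor expansion starting with $z$, the function $p$ is analytic in $\mathcal U$ with $p(0)=1$... wait --- here we need $q\in\mathcal Q_0$, so one should check the normalization is $p(0)=0$ after the appropriate shift, or rather interpret $p\in\mathcal H_0$ consistently with the statement; in any case $p\in\mathcal H[\kappa,n]$ with $n\ge 2$ because $S_{a+1,c}f(z)/z = 1 + (\text{terms in }z)$ and, more to the point, the differences with $q(0)$ vanish to order at least $2$ after using \eqref{z11}. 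Then, using the recurrence \eqref{z11} in the form $z(S_{a+1,c}f(z))'=aS_{a,c}f(z)-(a-1)S_{a+1,c}f(z)$, I would compute $S_{a,c}f(z)/z$, $S_{a-1,c}f(z)/z$, $S_{a-2,c}f(z)/z$ in terms of $p(z),zp'(z),z^2p''(z),z^3p'''(z)$; these are precisely \eqref{z33}, \eqref{z34}, \eqref{z35}, obtained by differentiating the relation $S_{a+1,c}f(z)=zp(z)$ and iterating the recurrence.

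Next I would introduce the transformation $\psi(r,s,t,u)=\phi(\alpha,\beta,\gamma,\delta;z)$ given by \eqref{z36}--\eqref{z38}, so that substituting $(r,s,t,u)=(p(z),zp'(z),z^2p''(z),z^3p'''(z))$ reproduces, via \eqref{z33}--\eqref{z35}, the identity \eqref{z39}: namely $\psi(p(z),zp'(z),z^2p''(z),z^3p'''(z);z)=\phi(S_{a+1,c}f(z)/z,\dots,S_{a-2,c}f(z)/z;z)$. Condition \eqref{z31} then says exactly that $\psi(p(z),zp'(z),z^2p''(z),z^3p'''(z);z)\in\Omega$ for $z\in\mathcal U$.

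The crux is then to verify that $\psi\in\Psi_2[\Omega,q]$ in the sense of Definition~\ref{d2a}. For this I would invert the linear relations \eqref{z36}--\eqref{z37} to express $s/r,\ t/s+1,\ u/s$ in terms of $\alpha,\beta,\gamma,\delta$, obtaining the two displayed identities just before the end of the theorem, i.e. $\frac{t}{s}+1=\frac{(a-1)(\gamma-\alpha)}{\beta-\alpha}+(1-2a)$ (after simplification of the $a^2v$ term, which should reduce or be absorbed correctly) and $\frac{u}{s}=\frac{(a-1)(a-2)(\delta-\alpha)-3a(a-1)(\gamma-2\alpha+\beta)}{\beta-\alpha}+6a^2$. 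Feeding in the admissibility data $\alpha=q(\zeta),\ \beta=(k\zeta q'(\zeta)+aq(\zeta))/a$ (so that $\beta-\alpha = k\zeta q'(\zeta)/a$), the hypotheses of Definition~\ref{d3} become precisely $\Re(t/s+1)\ge k\Re(\zeta q''(\zeta)/q'(\zeta)+1)$ and $\Re(u/s)\ge k^2\Re(\zeta^2 q'''(\zeta)/q'(\zeta))$, which is the admissibility requirement for $\Psi_2[\Omega,q]$; combined with the second part of \eqref{z30}, namely $|S_{a,c}f(z)/(zq'(\zeta))|\le k$, which translates to $|zp'(z)/q'(\zeta)|\le k$ via \eqref{z33} after noting $S_{a,c}f(z)/z - p(z) = zp'(z)/a$... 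I would be careful here: the bound in Lemma~\ref{t4} is $|zq'(z)/q'(\zeta)|\le k$ on $p$'s side, so one needs $|S_{a,c}f(z)/z \cdot a - ap(z)| = |zp'(z)|$, and $\eqref{z30}$ supplies the right estimate. Finally, an application of Lemma~\ref{t4} (with $\psi\in\Psi_2[\Omega,q]$, $p\in\mathcal H[\kappa,n]$, $q\in\mathcal Q_0$, and the first inequality of \eqref{z30} providing $\Re(\zeta q''(\zeta)/q'(\zeta))\ge 0$) yields $p(z)\prec q(z)$, i.e. $S_{a+1,c}f(z)/z\prec q(z)$, completing the proof.

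The main obstacle I anticipate is the bookkeeping in inverting the $4\times4$ triangular system \eqref{z36}--\eqref{z37} to get clean expressions for $t/s+1$ and $u/s$; in particular reconciling the stray $a^2v$ appearing in the paper's display with the coefficient-matched form in Definition~\ref{d3} requires patience with the algebra, and one must double-check the factors of $(a-1)$ and $(a-2)$ (and the exclusion $a\ne 0,-1,-2$, already built into the definition of the operator via $\eqref{z11}$). Everything else is a routine transcription of the argument for Theorem~\ref{t6}.
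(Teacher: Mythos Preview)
Your proposal is correct and follows essentially the same route as the paper: define $p(z)=S_{a+1,c}f(z)/z$, use the recurrence \eqref{z11} to derive \eqref{z33}--\eqref{z35}, build $\psi$ from $\phi$ via the triangular change of variables \eqref{z36}--\eqref{z38}, check that the admissibility of $\phi$ in Definition~\ref{d3} translates into $\psi\in\Psi_2[\Omega,q]$, and conclude by Lemma~\ref{t4}. The concerns you flag (the normalization $p(0)$ versus $q\in\mathcal Q_0$, the stray $a^2v$ in the displayed formula for $t/s+1$, and the passage from \eqref{z30} to the bound $|zp'(z)/q'(\zeta)|\le k$) are exactly the bookkeeping points one has to handle, and they reflect genuine looseness in the paper itself rather than any defect in your strategy.
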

If $\Omega \neq \mathbb{C}$ is a simply connected domain, then $\Omega=h(\mathcal{U})$ for some conformal mapping $h(z)$ of $\mathcal{U}$ onto $\Omega.$ In this case, the class $\Phi_{S,1}[h(\mathcal{U}),q]$ is written  as $\Phi_{S,1}[h,q]$. 
proceeding similarly as in the previous theorem, the following result is an immediate  consequence of Theorem   \ref{t9}.
\begin{theorem}\label{t10}
Let $\phi \in \Phi_{S,1}[h,q].$If the function $f \in \mathcal{A}$ and $q \in \mathcal{Q}_{0}$ satisfy the following conditions:
\begin{equation}\label{z40}
\Re\left(\frac{\zeta q^{\prime\prime}(\zeta)}{q^{\prime}(\zeta)}\right)\geq 0,\left|\frac{S_{a,c}f(z)}{zq^{\prime}(\zeta)}\right|\leq k,
\end{equation}
and 
\begin{equation}\label{z41}
\phi \left(\frac{S_{a+1,c}f(z)}{z},\frac{S_{a,c}f(z)}{z},\frac{S_{a-1,c}f(z)}{z},\frac{S_{a-2,c}f(z)}{z};z \right)\prec h(z),
\end{equation}
then 
\begin{equation*}
\frac{S_{a+1,c}f(z)}{z} \prec q(z)\qquad(z \in \mathcal{U}).
\end{equation*}
\end{theorem}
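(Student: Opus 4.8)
The plan is to obtain Theorem \ref{t10} as a direct specialization of Theorem \ref{t9}, exploiting the standard fact that subordination to a conformal map is equivalent to containment in its image. Since $h$ is univalent on $\mathcal{U}$, the set $\Omega := h(\mathcal{U})$ is a simply connected domain properly contained in $\mathbb{C}$, and by the convention recalled just before the statement the hypothesis $\phi \in \Phi_{S,1}[h,q]$ means precisely $\phi \in \Phi_{S,1}[h(\mathcal{U}),q] = \Phi_{S,1}[\Omega,q]$. Thus the admissibility hypothesis demanded by Theorem \ref{t9} is already in hand, and no new class of admissible functions needs to be analysed.

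First I would observe that condition (\ref{z40}) coincides verbatim with condition (\ref{z30}) of Theorem \ref{t9}, so nothing has to be checked there. Next I would translate the subordination (\ref{z41}): writing $\Xi(z) := \phi\!\left(S_{a+1,c}f(z)/z,\, S_{a,c}f(z)/z,\, S_{a-1,c}f(z)/z,\, S_{a-2,c}f(z)/z;\, z\right)$, the relation $\Xi(z) \prec h(z)$ in $\mathcal{U}$ forces $\Xi(\mathcal{U}) \subset h(\mathcal{U}) = \Omega$, which is exactly the inclusion hypothesis (\ref{z31}) required by Theorem \ref{t9}. With (\ref{z30}) (which is (\ref{z40})) and (\ref{z31}) satisfied and $\phi \in \Phi_{S,1}[\Omega,q]$, Theorem \ref{t9} yields $S_{a+1,c}f(z)/z \prec q(z)$ for $z \in \mathcal{U}$, which is the assertion.

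There is no genuine obstacle here, since the argument is a purely definitional unwinding; the only point that needs a moment's care is the direction of the subordination--containment passage, and for the present implication one uses only ``subordination $\Rightarrow$ containment,'' which holds for any $h$ and does not even require univalence. If a self-contained write-up is preferred to quoting Theorem \ref{t9}, one may instead repeat its proof line by line: set $p(z) = S_{a+1,c}f(z)/z$, derive the identities (\ref{z33})--(\ref{z35}) from (\ref{z11}), form $\psi$ by (\ref{z38}), note that (\ref{z41}) becomes $\psi\!\left(p(z), zp'(z), z^{2}p''(z), z^{3}p'''(z); z\right) \prec h(z)$, check that the admissibility conditions of Definition \ref{d3} match those of Definition \ref{d2a} with $n = 2$, and invoke Lemma \ref{t4}.
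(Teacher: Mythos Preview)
Your proposal is correct and matches the paper's approach exactly: the paper states that Theorem \ref{t10} is an immediate consequence of Theorem \ref{t9} via the identification $\Phi_{S,1}[h,q]=\Phi_{S,1}[h(\mathcal{U}),q]$ and provides no separate proof. Your additional remark outlining the self-contained argument via $p(z)=S_{a+1,c}f(z)/z$ and Lemma \ref{t4} is also consistent with how the paper proves Theorem \ref{t9}.
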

 In the particular case $q(z)=1+Mz,~~M>0,$ and in view of Definition \ref{d3} the class of admissible functions $\Phi_{S,1}[\Omega,q],$ denoted by  $\Phi_{S,1}[\Omega,M],$ is expressed as follows.
\begin{defn}\label{d4}
Let $\Omega$ be a set in $\mathbb{C}, a \in \mathbb{C} \setminus \{0,1,2\}$ and $M>0.$ The class of admissible function $\Phi_{S,1}[\Omega,M]$ consists of those functions $\phi:\mathbb{C}^{4}\times \mathcal{U}\longrightarrow\mathbb{C}$ such that
\begin{multline}\label{z42}
\phi \bigg(1+Me^{i\theta},\frac{a+[k+a]Me^{i\theta}}{a},\frac{L+a(a-1)+[a(2k+a-1)]Me^{i\theta}}{a(a-1)},\\ \frac{N+3aL+a(a-1)(a-2)+[(a-1)a(3k+a-2)]Me^{i\theta}}{a(a-1)(a-2)};z\bigg)\not\in \Omega.
\end{multline}
whenever $z \in \mathcal{U},\Re(Le^{-i\theta})\geq (k-1)kM,$ and $\Re(Ne^{-i\theta})\geq 0$ for all $\theta \in \mathbb{R}$ and $k \geq 2.$ 
\end{defn}
\begin{corollary}\label{c5}
Let $\phi \in \Phi_{S,1}[\Omega,M].$ If the function $f \in \mathcal{A}$ satisfies 
\begin{equation*}
\left|\frac{S_{a,c}f(z)}{z}\right|\leq kM\qquad(z \in \mathcal{U},k\geq 2; M>0),
\end{equation*}
and 
\begin{equation*}
\phi \left(\frac{S_{a+1,c}f(z)}{z},\frac{S_{a,c}f(z)}{z},\frac{S_{a-1,c}f(z)}{z},\frac{S_{a-2,c}f(z)}{z};z \right)\in \Omega,
\end{equation*}
then 
\begin{equation*}
\left|\frac{S_{a+1,c}f(z)}{z}-1\right|<M.
\end{equation*}
\end{corollary}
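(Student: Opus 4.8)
The plan is to obtain Corollary \ref{c5} as the specialization of Theorem \ref{t9} to the dominant $q(z)=1+Mz$, in the same way that Corollary \ref{c3} specializes the corresponding subordination theorem. The function $q(z)=1+Mz$ is univalent on $\overline{\mathcal U}$ with $E(q)=\emptyset$, and $q'(z)\equiv M\neq 0$, $q''(z)\equiv q'''(z)\equiv 0$, so it is admissible for Theorem \ref{t9}. The two requirements in (\ref{z30}) then become, respectively, $\Re\bigl(\zeta q''(\zeta)/q'(\zeta)\bigr)=0\ge 0$, which is immediate, and $\bigl|S_{a,c}f(z)/(zq'(\zeta))\bigr|=\bigl|S_{a,c}f(z)/z\bigr|/M\le kM/M=k$, which is exactly the standing hypothesis $|S_{a,c}f(z)/z|\le kM$. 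Hence (\ref{z30}) holds automatically, and nothing further need be assumed about $q$ beyond what is built into $q(z)=1+Mz$.

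The substantive step is to check that, for this $q$, the class $\Phi_{S,1}[\Omega,q]$ of Definition \ref{d3} coincides with the class $\Phi_{S,1}[\Omega,M]$ of Definition \ref{d4}. Writing a boundary point as $\zeta=e^{i\theta}$, we have $\alpha=q(\zeta)=1+Me^{i\theta}$ and $k\zeta q'(\zeta)=kMe^{i\theta}$, so $\beta=(a+(k+a)Me^{i\theta})/a$. Substituting $r=1+Me^{i\theta}$ and $s=kMe^{i\theta}$ into the transformations (\ref{z36}) and (\ref{z37}) and renaming the remaining free arguments $t=:L$, $u=:N$ reproduces verbatim the third and fourth slots of (\ref{z42}). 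Since $q''\equiv q'''\equiv 0$, the two admissibility inequalities of Definition \ref{d3} reduce to $\Re(t/s+1)\ge k$ and $\Re(u/s)\ge 0$; inserting $s=kMe^{i\theta}$ turns these into $\Re(Le^{-i\theta})\ge (k-1)kM$ and $\Re(Ne^{-i\theta})\ge 0$, exactly the side conditions of Definition \ref{d4}. Therefore $\phi\in\Phi_{S,1}[\Omega,M]$ is equivalent to $\phi\in\Phi_{S,1}[\Omega,1+Mz]$, and the membership hypothesis of Corollary \ref{c5} is precisely condition (\ref{z31}) of Theorem \ref{t9}.

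Applying Theorem \ref{t9} now gives $S_{a+1,c}f(z)/z\prec 1+Mz$. Since $w\mapsto 1+Mw$ maps $\mathcal U$ conformally onto the open disc $\{w:|w-1|<M\}$ and $S_{a+1,c}f(z)/z$ equals $1$ at the origin, this subordination is equivalent to $\bigl|S_{a+1,c}f(z)/z-1\bigr|<M$ for $z\in\mathcal U$, which is the assertion. I expect the only step requiring care to be the verification in the middle paragraph that (\ref{z42}) is exactly what (\ref{z36})--(\ref{z37}) produce when $r=1+Me^{i\theta}$, $s=kMe^{i\theta}$, $t=L$, $u=N$; that is routine algebra using only $(a)_0=1$ and the definitions, with no analytic input needed.
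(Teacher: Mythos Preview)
Your proposal is correct and follows exactly the route intended by the paper: Corollary~\ref{c5} is simply Theorem~\ref{t9} specialized to $q(z)=1+Mz$, and the paper states it without proof, merely noting that Definition~\ref{d4} is what Definition~\ref{d3} becomes for this choice of $q$. Your detailed verification that the admissibility data match (including the translation of the side inequalities into $\Re(Le^{-i\theta})\ge (k-1)kM$ and $\Re(Ne^{-i\theta})\ge 0$) is accurate and more explicit than what the paper provides. One small remark: Definition~\ref{d3} and Theorem~\ref{t9} as printed require $q\in\mathcal Q_0\cap\mathcal H_0$, i.e.\ $q(0)=0$, whereas $q(z)=1+Mz$ has $q(0)=1$; this is a typo in the paper (it should read $\mathcal Q_1\cap\mathcal H_1$, consistent with $p(0)=S_{a+1,c}f(0)/0$-limit $=1$), not a flaw in your argument.
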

 In this special case $\Omega=q(\mathcal{U})=\{w:|w-1|<M\},$ the class $\Phi_{S,1}[\Omega,M]$ is simply denoted by $\Phi_{S,1}[M].$ Corollary \ref{c5}  can now be written in the following form
 \begin{corollary}\label{c6}
 Let $\phi \in \Phi_{S,1}[M].$ If the function $f \in \mathcal{A}$ satisfies 
\begin{equation*}
\left|\frac{S_{a,c}f(z)}{z}\right|\leq kM \qquad(k\geqslant 2 ; M>0),
\end{equation*}
and 
\begin{equation*}
\left|\phi\left(\frac{S_{a+1,c}f(z)}{z},\frac{S_{a,c}f(z)}{z},\frac{S_{a-1,c}f(z)}{z},\frac{S_{a-2,c}f(z)}{z};z\right) -1\right|<M,
\end{equation*}
then
\begin{equation*}
\left|\frac{S_{a+1,c}f(z)}{z}-1\right|< M.
\end{equation*}
\end{corollary}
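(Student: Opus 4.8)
The plan is to deduce Corollary \ref{c6} from Corollary \ref{c5} by specializing the ambient set $\Omega$ to a disk. Following the convention fixed immediately before the statement, I would take
\begin{equation*}
\Omega=q(\mathcal{U})=\{w\in\mathbb{C}:|w-1|<M\},\qquad q(z)=1+Mz,
\end{equation*}
and recall that, for this particular $\Omega$, the symbol $\Phi_{S,1}[M]$ is by definition nothing other than the class $\Phi_{S,1}[\Omega,M]$ introduced above. Hence the hypothesis $\phi\in\Phi_{S,1}[M]$ is exactly the hypothesis $\phi\in\Phi_{S,1}[\Omega,M]$ required by Corollary \ref{c5}.

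It then remains only to match the two analytic hypotheses. The growth restriction $|S_{a,c}f(z)/z|\le kM$ is common to both statements, so there is nothing to verify there. For the remaining hypothesis, since $\Omega$ is the open disk of radius $M$ centered at $1$, the inequality appearing in Corollary \ref{c6},
\begin{equation*}
\left|\phi\left(\frac{S_{a+1,c}f(z)}{z},\frac{S_{a,c}f(z)}{z},\frac{S_{a-1,c}f(z)}{z},\frac{S_{a-2,c}f(z)}{z};z\right)-1\right|<M,
\end{equation*}
is merely a reformulation of the membership condition
\begin{equation*}
\phi\left(\frac{S_{a+1,c}f(z)}{z},\frac{S_{a,c}f(z)}{z},\frac{S_{a-1,c}f(z)}{z},\frac{S_{a-2,c}f(z)}{z};z\right)\in\Omega
\end{equation*}
demanded by Corollary \ref{c5}. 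Invoking Corollary \ref{c5} then yields $|S_{a+1,c}f(z)/z-1|<M$ for all $z\in\mathcal{U}$, which is precisely the assertion of Corollary \ref{c6}.

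There is no genuine obstacle here: the whole argument is an exercise in unwinding the notational abbreviation $\Phi_{S,1}[M]=\Phi_{S,1}[q(\mathcal{U}),M]$ with $q(z)=1+Mz$, together with the elementary observation that ``$w\in\Omega$'' and ``$|w-1|<M$'' express the same condition for this choice of $\Omega$. The one point deserving a moment's attention is confirming that the normalization $q(z)=1+Mz$ implicitly attached to the symbol $\Phi_{S,1}[M]$ agrees with the $q$ used to build $\Omega$, so that Corollary \ref{c5} is applied with one and the same admissible class; once that is noted, the conclusion is immediate.
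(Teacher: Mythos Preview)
Your argument is correct and matches the paper's approach exactly: the paper introduces Corollary \ref{c6} only as the special case of Corollary \ref{c5} obtained by taking $\Omega=q(\mathcal{U})=\{w:|w-1|<M\}$ and abbreviating $\Phi_{S,1}[\Omega,M]$ as $\Phi_{S,1}[M]$, with no further proof given. Your unwinding of this notational convention is precisely what the paper intends.
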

\begin{corollary}\label{c20}
Let $\Re(a) \geq \frac{-k}{2},$ $0 \neq a \in \mathbb{C}, k \geq 2$ and $M>0.$ If $f \in \mathcal{A}$ satisfies
\begin{equation*}
\left|\frac{S_{a,c}f(z)}{z}\right| \leq kM,
~\text{
and}~
\left|\frac{S_{a,c}f(z)}{z}-1\right| < M,
\end{equation*}
then
\begin{equation*}
\left|\frac{S_{a+1,c}f(z)}{z}-1\right| < M.
\end{equation*}
\begin{proof}
This follows from Corollary \ref{c5}  by taking $\phi(\alpha,\beta,\gamma,\delta;z)=\beta-1.$ 
\end{proof}
\end{corollary}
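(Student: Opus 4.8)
The plan is to derive this from Corollary \ref{c5} by choosing the admissible function
\[
\phi(\alpha,\beta,\gamma,\delta;z)=\beta-1
\]
(which does not involve the slots $\gamma$ and $\delta$) together with the domain $\Omega=\{w\in\mathbb{C}:|w|<M\}$, i.e.\ $h(z)=Mz$. With this choice, evaluating $\phi$ at the quadruple occurring in Corollary \ref{c5} gives
\[
\phi\left(\frac{S_{a+1,c}f(z)}{z},\frac{S_{a,c}f(z)}{z},\frac{S_{a-1,c}f(z)}{z},\frac{S_{a-2,c}f(z)}{z};z\right)=\frac{S_{a,c}f(z)}{z}-1 .
\]
Consequently the two hypotheses of the present Corollary, namely $\left|S_{a,c}f(z)/z\right|\le kM$ and $\left|S_{a,c}f(z)/z-1\right|<M$, are precisely the hypotheses ``$\left|S_{a,c}f(z)/z\right|\le kM$'' and ``$\phi(\cdots)\in\Omega$'' required by Corollary \ref{c5}, whose conclusion $\left|S_{a+1,c}f(z)/z-1\right|<M$ is exactly what we want. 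Thus everything reduces to showing that $\phi\in\Phi_{S,1}[\Omega,M]$.

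To verify the admissibility condition (\ref{z42}) of Definition \ref{d4}, observe that since $\phi=\beta-1$ depends only on the second slot, the auxiliary restrictions on $L$ and $N$ are immaterial; only the second argument matters. Substituting the prescribed value $\beta=\dfrac{a+[k+a]Me^{i\theta}}{a}$ yields
\[
\phi\left(1+Me^{i\theta},\ \frac{a+[k+a]Me^{i\theta}}{a},\ \ldots\ ;z\right)=\frac{a+[k+a]Me^{i\theta}}{a}-1=\frac{(k+a)Me^{i\theta}}{a},
\]
whose modulus equals $\dfrac{|k+a|}{|a|}\,M$. Hence the required condition $\phi(\cdots)\notin\Omega$, that is, that this modulus be at least $M$, holds if and only if $|k+a|\ge|a|$.

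The inequality $|k+a|\ge|a|$ is elementary: writing $a=x+iy$ one has $|k+a|^{2}-|a|^{2}=2kx+k^{2}=k(2x+k)$, which is nonnegative precisely when $\Re(a)=x\ge -k/2$, since $k\ge 2>0$. This is exactly the standing assumption $\Re(a)\ge -k/2$, so $\phi\in\Phi_{S,1}[\Omega,M]$ and Corollary \ref{c5} gives the assertion. There is no genuine obstacle in this argument; the only point that requires a little care is matching $\phi=\beta-1$ with the domain $\Omega=\{|w|<M\}$ (equivalently $h(z)=Mz$) rather than with $\{|w-1|<M\}$, so that the condition ``$\phi(\cdots)\in\Omega$'' reproduces the hypothesis $\left|S_{a,c}f(z)/z-1\right|<M$ of the Corollary.
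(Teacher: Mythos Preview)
Your proof is correct and follows exactly the approach indicated in the paper, which merely states that the result follows from Corollary~\ref{c5} with $\phi(\alpha,\beta,\gamma,\delta;z)=\beta-1$; you have simply supplied the admissibility verification that the paper leaves implicit. In particular, your computation that the admissibility condition reduces to $|k+a|\ge|a|$, equivalently $\Re(a)\ge -k/2$, is precisely the reason this hypothesis appears in the statement.
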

\begin{remark}\label{ab}
For $f(z)=\frac{z}{1-z}$ in Corollary \ref{c20}, we have 
\begin{equation}\label{e345}
\left|\frac{U_{a,c}(z)}{z}-1\right|<M\Longrightarrow \left|\frac{U_{a+1,c}(z)}{z}-1\right|<M,
\end{equation}
which is  given by Andras and Baricz \cite{andras}.
\end{remark}
\begin{example} For $p=\pm 1/2, ~b=1$ and $c=1$, we have 
	$
	U_{2,1}(z)=zU_{1/2,1,1}(z)=2(1-\cos(\sqrt{z}))$ \quad and \quad $U_{1,1}(z)=zU_{-1/2,1,1}(z)=\sqrt{z}\sin\sqrt{z},
	$
	where $U_{p,b,c}$ is given by (\ref{z9}). Therefore, from Remark \ref{ab}, we get
	\begin{equation*}
		\left|\frac{\sqrt{z}\sin\sqrt{z}}{z}-1\right|<M\Rightarrow \left|\frac{1-\cos\sqrt{z}}{z}-\frac{1}{2}\right|<\frac{M}{2}.
	\end{equation*}
\end{example}
\begin{corollary}\label{c21}
Let $a \in \mathbb{C}\setminus \{0,1,-2\},k \geq 2$ and $M>0.$ If $f \in \mathcal{A}$ satisfies 
\begin{equation*}
\left|\frac{S_{a,c}f(z)}{z}\right| \leq kM,
~\text{
and }~
\left|\frac{S_{a-1,c}f(z)}{z}-\frac{S_{a,c}f(z)}{z}\right|< \frac{2(a+2)M}{a(a-1)},
\end{equation*}
then
\begin{equation*}
\left|\frac{S_{a+1,c}f(z)}{z}-1\right|<M.
\end{equation*}
\begin{proof}
This follows from Corollary \ref{c5} by taking $\phi(\alpha,\beta,\gamma,\delta;z)=\gamma-\beta.$
\end{proof}
\end{corollary}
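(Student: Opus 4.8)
The plan is to deduce Corollary~\ref{c21} from Corollary~\ref{c5}, exactly as Corollaries~\ref{c16} and~\ref{c20} were obtained from Corollary~\ref{c3} (resp.\ \ref{c5}): choose the admissible function $\phi(\alpha,\beta,\gamma,\delta;z)=\gamma-\beta$ and the simply connected domain $\Omega=h(\mathcal U)$ with $h(z)=\dfrac{2(a+2)M}{a(a-1)}\,z$, so that $\Omega=\bigl\{w:|w|<\tfrac{2|a+2|M}{|a(a-1)|}\bigr\}$. With this choice the composite $\phi\bigl(\tfrac{S_{a+1,c}f(z)}{z},\tfrac{S_{a,c}f(z)}{z},\tfrac{S_{a-1,c}f(z)}{z},\tfrac{S_{a-2,c}f(z)}{z};z\bigr)$ equals $\tfrac{S_{a-1,c}f(z)}{z}-\tfrac{S_{a,c}f(z)}{z}$, so the two displayed hypotheses of Corollary~\ref{c21} are precisely the two hypotheses of Corollary~\ref{c5}. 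Hence the whole statement reduces to verifying that $\phi\in\Phi_{S,1}[\Omega,M]$, i.e.\ that the admissibility condition (\ref{z42}) of Definition~\ref{d4} holds for this $\phi$.

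First I would substitute the prescribed boundary values into $\gamma-\beta$. Reading off the second and third slots of (\ref{z42}) gives $\beta=1+\tfrac{(k+a)Me^{i\theta}}{a}$ and $\gamma=1+\tfrac{L}{a(a-1)}+\tfrac{(2k+a-1)Me^{i\theta}}{a-1}$, and a short simplification of $\tfrac{2k+a-1}{a-1}-\tfrac{k+a}{a}=\tfrac{k(a+1)}{a(a-1)}$ yields
\begin{equation*}
\gamma-\beta=\frac{L+k(a+1)Me^{i\theta}}{a(a-1)}.
\end{equation*}
Note that $\phi$ depends on neither $\delta$ nor $z$ nor $N$, so the constraint $\Re(Ne^{-i\theta})\ge 0$ in (\ref{z42}) is irrelevant and only $\Re(Le^{-i\theta})\ge(k-1)kM$ is used.

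Next I would bound $|\gamma-\beta|$ from below using $|w|\ge\Re(we^{-i\theta})$:
\begin{equation*}
\bigl|L+k(a+1)Me^{i\theta}\bigr|\ \ge\ \Re\bigl(Le^{-i\theta}\bigr)+kM\,\Re(a+1)\ \ge\ (k-1)kM+kM(\Re(a)+1)=kM\bigl(k+\Re(a)\bigr),
\end{equation*}
and since $k\ge 2$ the map $k\mapsto k\bigl(k+\Re(a)\bigr)$ is increasing in the relevant range, so $kM(k+\Re(a))\ge 2M(2+\Re(a))$. This gives $|\gamma-\beta|\ge\dfrac{2|a+2|M}{|a(a-1)|}$, i.e.\ $\gamma-\beta\notin\Omega$, which is the required admissibility. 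Therefore $\phi\in\Phi_{S,1}[\Omega,M]$, and Corollary~\ref{c5} applied to $f$ yields $\bigl|\tfrac{S_{a+1,c}f(z)}{z}-1\bigr|<M$.

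The only step needing real care is the lower estimate in the last paragraph: one controls only $\Re(Le^{-i\theta})$ and not $L$ itself, so the bound must be pushed entirely through real parts, and the passage from $k\ge 2$ to the constant $2(a+2)$ uses monotonicity of $k\bigl(k+\Re(a)\bigr)$ — which is where the statement implicitly relies on $a$ being (positive) real, or at least on $\Re(a)$ being bounded below. Everything else is the routine bookkeeping already performed in Corollaries~\ref{c16} and~\ref{c20}.
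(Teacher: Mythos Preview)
Your approach is exactly the paper's: apply Corollary~\ref{c5} with $\phi(\alpha,\beta,\gamma,\delta;z)=\gamma-\beta$. The paper's own proof is the one-line assertion ``This follows from Corollary~\ref{c5} by taking $\phi(\alpha,\beta,\gamma,\delta;z)=\gamma-\beta$'', so you have supplied the verification of admissibility that the paper omits; your computation $\gamma-\beta=\dfrac{L+k(a+1)Me^{i\theta}}{a(a-1)}$ and the ensuing lower bound via $\Re(Le^{-i\theta})\ge(k-1)kM$ are correct, and your remark that the final comparison with $2|a+2|M$ only goes through for real $a$ (or under a lower bound on $\Re(a)$) is a genuine observation about a sloppiness in the stated hypothesis $a\in\mathbb{C}\setminus\{0,1,-2\}$.
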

\begin{defn}\label{d5a}
Let $\Omega$  be a set in $\mathbb{C}$ and $q \in \mathcal{Q}_{1} \cap \mathcal{H}_{1}.$ The class of admissible functions $\Phi_{S,2}[\Omega,q]$ consists of those functions $\phi: \mathbb{C}^{4} \times\mathcal U \longrightarrow \mathbb{C}$ that satisfy the following admissibility conditions
\begin{equation*}
\phi(v,w,x,y,;z) \not\in \Omega
\end{equation*}
whenever
\begin{equation*}
v=q(\zeta), ~w=\frac{1}{(a-1)}\left(\frac{k\zeta q^{\prime}(\zeta)}{q(\zeta)}+a q(\zeta)-1\right),
\end{equation*}
\begin{multline*}
\Re
\bigg(\frac{[(a-2)\gamma-(a-1)\beta+1](a-1)\beta\alpha}{(a-1)\beta\alpha-a\alpha^{2}+\alpha}+(a-1)\beta+1\bigg)\geq k \Re\bigg(\frac{\zeta q^{\prime\prime}(\zeta)}{q^{\prime}(\zeta)}+1\bigg),
\end{multline*}
and
\begin{multline*}
\Re \bigg[\bigg(\delta\gamma\beta\alpha(a-1)(a-2)(a-3)-(a-2)^{2}(a-1)\beta\gamma^{2}\alpha-\beta\gamma\alpha(a-1)(a-2)-\\ \qquad \qquad  \gamma\beta^{2}\alpha(a-1)^{2}(a-2)+\beta^{3}
\alpha(a-1)^{3}-2\beta\alpha(a-1)+\beta^{2}\alpha(a-1)^{2}-\alpha\beta\gamma a(a-1)(a-2)+\\\qquad \qquad \beta^{2}\alpha a (a-1)^{2}-a (a-1)\beta \alpha \bigg)\times \bigg((a-1)\beta\alpha-a\alpha^{2}+\alpha\bigg)^{-1}+ 3\gamma \beta(a-1)(a-2)-4\beta(a-1)(a\alpha-1)-\\ \qquad \qquad  2\beta^{2}(a-1)^{2}-\beta a (a-1)- 3a \alpha \beta (a-1)+2a^{2}\alpha-a+4a^{2}\alpha^{2}+a \alpha\bigg]\geq k^{2}\Re \bigg(\frac{{\zeta}^{2}q^{\prime\prime\prime}(\zeta)}{q^{\prime}(\zeta)}\bigg),
\end{multline*}
where $z \in \mathcal{U},\zeta \in \partial\mathcal{U} \setminus E(q)$ and $k\geq 2$.
\end{defn}
\begin{theorem}\label{t11}
Let $\phi \in \Phi_{S,2}[\Omega,q]$. If the function $f \in \mathcal{A}$ and $q \in \mathcal{Q}_{1}$ satisfy the following conditions
\begin{equation}\label{e30}
\Re\left(\frac{\zeta q^{\prime\prime}(\zeta)}{q^{\prime}(\zeta)}\right)\geq 0,\left| \frac{S_{a-1,c}f(z)}{S_{a,c}f(z)q^{\prime}(\zeta)}\right|\leq k,
\end{equation} 
\begin{equation}\label{e31}
\left\{\phi 
\left(\frac{S_{a,c}f(z)}{S_{a+1,c}f(z)},\frac{S_{a-1,c}f(z)}{S_{a,c}f(z)},\frac{S_{a-2,c}f(z)}{S_{a-1,c}f(z)},\frac{S_{a-3,c}f(z)}{S_{a-2,c}f(z)};z\right)
:z \in \mathcal{U}\right\}\subset \Omega,
\end{equation}
then
\begin{equation*}
\frac{S_{a,c}f(z)}{S_{a+1,c}f(z)}\prec q(z)\qquad(z \in \mathcal{U}).
\end{equation*}
\begin{proof}
Define the analytic function $p(z)$ in $\mathcal{U}$ by
\begin{equation}\label{e32}
p(z)=\frac{S_{a,c}f(z)}{S_{a+1,c}f(z)}.
\end{equation}
From equation (\ref{z11}) and (\ref{e32}), we have
\begin{equation}\label{e33}
\frac{S_{a-1,c}f(z)}{S_{a,c}f(z)}=\frac{1}{a-1}\left[\frac{zp^{\prime}(z)}{p(z)}+ap(z)-1\right]:=\frac{A}{(a-1)}.
\end{equation}
Similar computation yields:
\begin{equation}\label{e34}
\frac{S_{a-2,c}f(z)}{S_{a-1,c}f(z)}:=\frac{B}{(a-2)}
\end{equation}
and
\begin{eqnarray}\label{e35}
\frac{S_{a-3,c}f(z)}{S_{a-2,c}f(z)}=\frac{1}{a-3}\left[B-1+B^{-1}(C+A^{-1}D-A^{-2}C^{2})\right],
\end{eqnarray}
where
\begin{align*}
B&:=\frac{zp^{\prime}(z)}{p(z)}+ap(z)-2+\dfrac{\frac{zp^{\prime}(z)}{p(z)}+\frac{z^{2}p^{\prime\prime}(z)}{p(z)}-\left(\frac{zp^{\prime}(z)}{p(z)}\right)^{2}+azp^{\prime}(z)}{\frac{zp^{\prime}(z)}{p(z)}+ap(z)-1},\\
C&:=\frac{zp^{\prime}(z)}{p(z)}+\frac{z^{2}p^{\prime\prime}(z)}{p(z)}-\left(\frac{zp^{\prime}(z)}{p(z)}\right)^{2}+azp^{\prime}(z),\\
D&:= \frac{3z^{2}p^{\prime\prime}(z)}{p(z)}+\frac{zp^{\prime}(z)}{p(z)}-3\left(\frac{zp^{\prime}(z)}{p(z)}\right)^{2}+\frac{z^{3}p^{\prime\prime\prime}(z)}{p(z)}-\frac{3z^{3}p^{\prime\prime}(z)p^{\prime}(z)}{(p(z))^{2}}+\\ &\qquad 2\left(\frac{zp^{\prime}(z)}{p(z)}\right)^{3}  +azp^{\prime}+az^{2}p^{\prime\prime}(z).
\end{align*}
Define the transformation from $\mathbb{C}^{4}$ to $\mathbb{C}$ by
\begin{eqnarray*}
\alpha(r,s,t,u)=r,\qquad \beta(r,s,t,u)=\frac{1}{a-1}\left[\dfrac{s}{r}+ar-1 \right]:=\frac{E}{(a-1)},
\end{eqnarray*}
\begin{eqnarray}\label{e36}
\gamma(r,s,t,u)=\frac{1}{a-2}\left[\frac{s}{r}+ar-2+\frac{\frac{t}{r}+\frac{s}{r}-(\frac{s}{r})^{2}+as}{\frac{s}{r}+ar-1} \right]:=\frac{F}{(a-2)},
\end{eqnarray}
and
\begin{eqnarray}\label{e37}
\delta(r,s,t,u)= \frac{1}{a-3}\left[F-1+F^{-1}\left(G+E^{-1}H-E^{-2}G^{2}\right)\right],
\end{eqnarray}
where
\begin{align*}
G&:=\frac{t}{r}+\frac{s}{r}+as-\left(\frac{s}{r}\right)^{2},\\ 
H&:=\frac{3t}{r}+\frac{s}{r}-3\left(\frac{s}{r}\right)^{2}+\frac{u}{r}-\frac{3st}{r^{2}}+2\left(\frac{s}{r}\right)^{3}+ as-at.
\end{align*}
Let
\begin{multline}\label{e38}
\psi(r,s,t,u)=\phi(\alpha,\beta,\gamma,\delta;z)\\=\phi \left(r,\frac{E}{a-1},\frac{F}{a-2},\left[\frac{F-1+F^{-1}\left(G+E^{-1}H-E^{-2}G^{2}\right)}{a-3}\right]\right).
\end{multline}
The proof will make use of  Lemma \ref{t4}. Using equations (\ref{e32}) to (\ref{e35}),  and from (\ref{e38}), we have
\begin{multline}\label{e39}
\psi \left(p(z),zp^{\prime}(z),z^{2}p^{\prime\prime}(z),z^{3}p^{\prime\prime\prime}(z);z\right)\\=\phi \left(\frac{S_{a,c}f(z)}{S_{a+1,c}f(z)},\frac{S_{a-1,c}f(z)}{S_{a,c}f(z)},\frac{S_{a-2,c}f(z)}{S_{a-1,c}f(z)},\frac{S_{a-3,c}f(z)}{S_{a-2,c}f(z)};z\right).
\end{multline} 
Hence, (\ref{e31}) becomes
\begin{equation*}
\psi \left(p(z),zp^{\prime}(z),z^{2}p^{\prime\prime}(z),z^{3}p^{\prime\prime\prime}(z);z\right) \in \Omega.
\end{equation*}
Note that
\begin{equation*}
\frac{t}{s}+1=\left(\frac{[(a-2)\gamma-(a-1)\beta+1](a-1)\beta\alpha}{(a-1)\beta\alpha-a\alpha^{2}+\alpha}+(a-1)\beta+1\right)
\end{equation*}
and
\begin{multline*}
\frac{u}{s}=\bigg[\bigg(\delta\gamma\beta\alpha(a-1)(a-2)(a-3)-(a-2)^{2}(a-1)\beta\gamma^{2}\alpha-\beta\gamma\alpha(a-1)(a-2)-\\ \qquad \qquad \gamma\beta^{2}\alpha(a-1)^{2}(a-2)+\beta^{3}\alpha(a-1)^{3}-2\beta\alpha(a-1)+\beta^{2}\alpha(a-1)^{2}-\alpha\beta\gamma a(a-1)(a-2)+\\ \qquad \qquad \beta^{2}\alpha a (a-1)^{2}-a (a-1)\beta \alpha \bigg)\times \bigg((a-1)\beta\alpha-a\alpha^{2}+\alpha\bigg)^{-1}+ 3\gamma \beta(a-1)(a-2)-4\beta(a-1)\\ \qquad \qquad (a\alpha-1)-2\beta^{2}(a-1)^{2}-\beta a (a-1)- 3a \alpha \beta (a-1)+2a^{2}\alpha-a+4a^{2}\alpha^{2}+a \alpha\bigg].
\end{multline*}
Thus, the admissibility condition for $\phi \in \Phi_{S,2}[\Omega,q]$ in Definition \ref{d5} is equivalent to the admissibility condition for $\psi \in \Psi_{2}[\Omega,q]$ as given in Definition \ref{d2a} with $n=2.$ Therefore, by using (\ref{e30}) and Lemma \ref{t4}, we have
\begin{equation*}
\frac{S_{a,c}f(z)}{S_{a+1,c}f(z)}\prec q(z).
\end{equation*}
This completes the proof of theorem.
\end{proof}
\end{theorem}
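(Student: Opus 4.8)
The plan is to follow the template established in the proofs of Theorems \ref{t6} and \ref{t9}, the new feature being that the test function is now a \emph{ratio} of consecutive values of the operator rather than a single value. First I would set
\[
p(z)=\frac{S_{a,c}f(z)}{S_{a+1,c}f(z)};
\]
since $S_{a,c}f(z)=z+\cdots$ and $S_{a+1,c}f(z)=z+\cdots$, this quotient is analytic near the origin with $p(0)=1$, so $p\in\mathcal{H}[1,1]\subset\mathcal{H}$ (the setting in which Lemma \ref{t4} will be invoked, with $n=2$ as in the companion theorems). The structural engine is the contiguous relation (\ref{z11}): applied with $a$ replaced successively by $a,\,a-1,\,a-2,\dots$, it gives for each admissible index $j$ an identity of the form $z(S_{j,c}f)'/S_{j,c}f=(j-1)\,S_{j-1,c}f/S_{j,c}f-(j-2)$. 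Logarithmically differentiating $p$ and iterating this identity expresses $S_{a-1,c}f/S_{a,c}f$, $S_{a-2,c}f/S_{a-1,c}f$ and $S_{a-3,c}f/S_{a-2,c}f$ as rational functions of $p$, $zp'/p$, $z^{2}p''/p$ and $z^{3}p'''/p$; this produces exactly the formulas (\ref{e33})--(\ref{e35}) together with the auxiliary quantities $A,B,C,D$ collected there.

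Next I would introduce the transformation $\psi:\mathbb{C}^{4}\to\mathbb{C}$ of (\ref{e36})--(\ref{e38}), manufactured so that the substitution $(r,s,t,u)=(p(z),zp'(z),z^{2}p''(z),z^{3}p'''(z))$ yields the identity (\ref{e39}), namely
\[
\psi(p,zp',z^{2}p'',z^{3}p''';z)=\phi\!\left(\tfrac{S_{a,c}f}{S_{a+1,c}f},\tfrac{S_{a-1,c}f}{S_{a,c}f},\tfrac{S_{a-2,c}f}{S_{a-1,c}f},\tfrac{S_{a-3,c}f}{S_{a-2,c}f};z\right).
\]
Hypothesis (\ref{e31}) then reads $\psi(p,zp',z^{2}p'',z^{3}p''';z)\in\Omega$. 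The decisive step is to establish the two algebraic identities
\[
\frac{t}{s}+1=\frac{[(a-2)\gamma-(a-1)\beta+1](a-1)\beta\alpha}{(a-1)\beta\alpha-a\alpha^{2}+\alpha}+(a-1)\beta+1
\]
and the analogous (considerably longer) closed form for $u/s$ recorded in Definition \ref{d5a}, where $\alpha,\beta,\gamma,\delta$ denote the components of the transformation. Granting these, the admissibility requirements defining the class $\Phi_{S,2}[\Omega,q]$ in Definition \ref{d5a} become \emph{verbatim} the Antonino--Miller conditions characterising $\psi\in\Psi_{2}[\Omega,q]$ of Definition \ref{d2a} with $n=2$, so $\phi\in\Phi_{S,2}[\Omega,q]$ is equivalent to $\psi\in\Psi_{2}[\Omega,q]$.

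Finally, the hypotheses (\ref{e30}) are precisely what Lemma \ref{t4} demands of the dominant $q\in\mathcal{Q}_{1}=\mathcal{Q}(1)$ (its base point $1$ matching $p(0)=1$) and of the growth of $zp'$; feeding them together with $\psi\in\Psi_{2}[\Omega,q]$ into Lemma \ref{t4} yields $p(z)\prec q(z)$, that is $S_{a,c}f(z)/S_{a+1,c}f(z)\prec q(z)$, which is the assertion. I expect the only genuine obstacle to be computational rather than conceptual: because $p$ now enters through logarithmic derivatives and therefore appears in denominators, inverting the relations $A=(a-1)\beta$, $B=(a-2)\gamma$, $\ldots$ and condensing $t/s+1$ and $u/s$ into the closed forms required by Definition \ref{d5a} is much heavier than the polynomial bookkeeping in Theorems \ref{t6} and \ref{t9}. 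One should also make explicit the tacit standing hypotheses that $S_{a+1,c}f(z)\neq 0$ on $\mathcal{U}$, so that $p$ is analytic there, and that $a\notin\{0,1,2,3\}$, so that none of the divisions by $a-j$ degenerates.
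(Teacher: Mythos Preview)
Your proposal is correct and follows essentially the same route as the paper's proof: define $p(z)=S_{a,c}f(z)/S_{a+1,c}f(z)$, iterate the recurrence (\ref{z11}) via logarithmic differentiation to recover the formulas (\ref{e33})--(\ref{e35}), build the transformation $\psi$ in (\ref{e36})--(\ref{e38}), verify that the admissibility condition for $\phi\in\Phi_{S,2}[\Omega,q]$ translates into $\psi\in\Psi_{2}[\Omega,q]$, and conclude with Lemma \ref{t4}. Your added remarks on the tacit nondegeneracy assumptions ($S_{a+1,c}f\neq 0$ on $\mathcal{U}$ and $a\notin\{0,1,2,3\}$) are well taken and are indeed implicit in the paper's argument.
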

If $\Omega \neq \mathbb{C}$ is a simply connected domain, then $\Omega=h(\mathcal{U})$ for some conformal mapping $h(z)$ of $\mathcal{U}$ onto $\Omega.$ In this case, the class $\Phi_{S,1}[h(\mathcal{U}),q]$ is written  as $\Phi_{S,2}[h,q]$.
Proceeding similarly as in the previous theorem, the following result is an immediate consequence of Theorem   \ref{t11}.
\begin{theorem}\label{t12}
Let $\phi \in \Phi_{S,2}[h,q].$If the function $f \in \mathcal{A}$ and $q \in \mathcal{Q}_{1}$ satisfy the following conditions $(\ref{e30})$ and 
\begin{equation}\label{e40}
\phi \left(\frac{S_{a,c}f(z)}{S_{a+1,c}f(z)},\frac{S_{a-1,c}f(z)}{S_{a,c}f(z)},\frac{S_{a-2,c}f(z)}{S_{a-1,c}f(z)},\frac{S_{a-3,c}f(z)}{S_{a-2,c}f(z)};z\right)\prec h(z),
\end{equation}
then 
\begin{equation*}
\frac{S_{a,c}f(z)}{S_{a+1,c}f(z)} \prec q(z)\qquad(z \in \mathcal{U}).
\end{equation*}
\end{theorem}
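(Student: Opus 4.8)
The plan is to obtain Theorem \ref{t12} as an immediate specialization of Theorem \ref{t11}, exactly along the lines used to pass from Theorem \ref{t6} to Theorem \ref{t7} and from Theorem \ref{t9} to Theorem \ref{t10}. First I would record that, since $h$ is univalent in $\mathcal{U}$, its image $\Omega := h(\mathcal{U})$ is a simply connected domain in $\mathbb{C}$; by the notational convention fixed just before the statement (the class $\Phi_{S,2}[h,q]$ being by definition $\Phi_{S,2}[h(\mathcal{U}),q]$, with $\Phi_{S,2}$ as in Definition \ref{d5a}), the hypothesis $\phi \in \Phi_{S,2}[h,q]$ is precisely $\phi \in \Phi_{S,2}[\Omega,q]$. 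No separate verification of an admissibility condition is therefore needed: it has already been absorbed into Definition \ref{d5a}.

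Next I would translate the subordination hypothesis (\ref{e40}) into the set-containment hypothesis (\ref{e31}) of Theorem \ref{t11}. Putting
\begin{equation*}
g(z) := \phi\left(\frac{S_{a,c}f(z)}{S_{a+1,c}f(z)},\frac{S_{a-1,c}f(z)}{S_{a,c}f(z)},\frac{S_{a-2,c}f(z)}{S_{a-1,c}f(z)},\frac{S_{a-3,c}f(z)}{S_{a-2,c}f(z)};z\right),
\end{equation*}
the subordination $g \prec h$ in $\mathcal{U}$ is equivalent, because $h$ is univalent (as recalled right after the definition of subordination in Section \ref{sec1}), to $g(\mathcal{U}) \subset h(\mathcal{U}) = \Omega$, which is exactly the inclusion
\begin{equation*}
\left\{\phi\left(\frac{S_{a,c}f(z)}{S_{a+1,c}f(z)},\frac{S_{a-1,c}f(z)}{S_{a,c}f(z)},\frac{S_{a-2,c}f(z)}{S_{a-1,c}f(z)},\frac{S_{a-3,c}f(z)}{S_{a-2,c}f(z)};z\right):z\in\mathcal{U}\right\}\subset\Omega.
\end{equation*}
The remaining data carry over verbatim: the pair of inequalities (\ref{e30}) is assumed identically in both statements, and $q \in \mathcal{Q}_1$ as required. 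Hence all hypotheses of Theorem \ref{t11} hold with this $\Omega$, and Theorem \ref{t11} gives $\dfrac{S_{a,c}f(z)}{S_{a+1,c}f(z)} \prec q(z)$, which is the assertion.

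I do not expect any genuine obstacle here; the argument is the standard reduction ``subordination to a univalent map $\Longleftrightarrow$ containment in the image''. The only points deserving a line of care are: (i) that $g$ is well defined and analytic in $\mathcal{U}$, which is implicit in the very statement of (\ref{e40}) and, via the representations (\ref{e33})--(\ref{e35}) established in the proof of Theorem \ref{t11}, follows from the analyticity of $p(z)=S_{a,c}f(z)/S_{a+1,c}f(z)$ near the origin (note that $S_{a+1,c}f(z)=z+\cdots$ does not vanish there); and (ii) that one must invoke the univalence of $h$, not merely its analyticity, for the two-way equivalence of subordination and image-containment. Once these are noted, the proof is a two-line deduction from Theorem \ref{t11}.
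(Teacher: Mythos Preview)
Your proposal is correct and mirrors exactly what the paper does: it states that Theorem \ref{t12} is an immediate consequence of Theorem \ref{t11}, obtained by taking $\Omega=h(\mathcal{U})$ and using that subordination to the univalent $h$ is equivalent to image-containment in $\Omega$. The paper gives no further detail, so your write-up is in fact more careful than the original.
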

 \section{Results based on differential superordination}
 In this section, the third-order differential superordination theorems for the operator $S_{a,c}$  defined in (\ref{z10}) is investigated. For the purpose, we considered the following class of admissible functions.
 \begin{defn}\label{d7}
 Let $\Omega$ be a set in $\mathbb{C}$ and $q \in \mathcal{H}_{0}$ with $q^{\prime}(z) \neq 0.$ The class of admissible function $\Phi_{S}^{\prime}[\Omega,q]$ consists of those functions $\phi:\mathbb{C}^{4}\times \overline{\mathcal{U}}\longrightarrow\mathbb{C}$ that satisfy the following admissibility conditions:
\begin{equation*}
\phi(\alpha,\beta,\gamma,\delta;\zeta) \in \Omega
\end{equation*}
whenever
\begin{equation*}
\alpha=q(z),\beta=\frac{z q^{\prime}(z)+m(a-1)q(z)}{ma},
\end{equation*}
\begin{equation*}
\Re\left(\frac{a(a-1)\gamma-(a-2)(a-1)\alpha}{a\beta-(a-1)\alpha}-(2a-3)\right)\leq \frac{1}{m} \Re\left(\frac{z q^{\prime\prime}(z)}{q^{\prime}(z)}+1\right),
\end{equation*}
and 
\begin{equation*}
\Re\left(\frac{a(a-1)((1-a)\alpha+(3a\beta+(1-3a)\gamma+(a-2)\delta)}{\alpha+a(\beta-\alpha)}\right)\leq \frac{1}{m^{2}} \Re \left(\frac{z^{2} q^{\prime\prime\prime}(z)}{q^{\prime}(z)}\right),
\end{equation*}
where $z \in \mathcal{U},\zeta \in \partial \mathcal{U} \setminus E(q),$ and $m\geq 2.$ \end{defn}
\begin{theorem}\label{t13}
Let $\phi \in \Phi_{S}^{\prime}[\Omega,q].$If the function $f \in \mathcal{A}$ and $S_{a+1,c}f(z)\in \mathcal{Q}_{0}$ and $q \in \mathcal{H}_{0}$ with $q^{\prime}(z)\neq 0$ satisfy the following conditions:
\begin{equation}\label{p30}
\Re\left(\frac{z q^{\prime\prime}(z)}{q^{\prime}(z)}\right)\geq 0,\left|\frac{S_{a,c}f(z)}{q^{\prime}(z)}\right|\leq m,
\end{equation}
and 
\begin{equation*}
\phi(S_{a+1,c}f(z),S_{a,c}f(z),S_{a-1,c}f(z),S_{a-2,c}f(z);z)
\end{equation*}
 is univalent in $\mathcal{U}$,then 
 \begin{equation}\label{p31}
\Omega \subset \{\phi(S_{a+1,c}f(z),S_{a,c}f(z),S_{a-1,c}f(z),S_{a-2,c}f(z);z)z \in \mathcal{U}\},
 \end{equation}
 implies that
 \begin{equation*}
q(z) \prec S_{a+1,c}f(z)\qquad(z \in \mathcal{U}).
\end{equation*}
\end{theorem}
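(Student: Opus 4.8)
The plan is to run the argument in exact parallel with the proof of Theorem~\ref{t6}, but invoking the superordination foundation Lemma~\ref{t5} in place of the subordination Lemma~\ref{t4}. First I would set $p(z):=S_{a+1,c}f(z)$; since $f\in\mathcal A$ we have $p(0)=0$, so $p\in\mathcal H_0$, and by hypothesis $S_{a+1,c}f(z)\in\mathcal Q_0$, while the composite $\phi(S_{a+1,c}f(z),S_{a,c}f(z),S_{a-1,c}f(z),S_{a-2,c}f(z);z)$ is assumed univalent in $\mathcal U$. Using the contiguous relation (\ref{z11}) repeatedly, exactly as in the derivation of (\ref{z19})--(\ref{z21}), I would express $S_{a,c}f(z)$, $S_{a-1,c}f(z)$ and $S_{a-2,c}f(z)$ in terms of $p,\,zp',\,z^{2}p''$ and $z^{3}p'''$. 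Then, with the transformations $\alpha,\beta,\gamma,\delta$ and the function $\psi$ defined as in (\ref{z22})--(\ref{z24}), one obtains the identity (\ref{z25}),
\[
\psi\bigl(p(z),zp'(z),z^{2}p''(z),z^{3}p'''(z);z\bigr)=\phi\bigl(S_{a+1,c}f(z),S_{a,c}f(z),S_{a-1,c}f(z),S_{a-2,c}f(z);z\bigr),
\]
so that $\psi(p(z),zp'(z),z^{2}p''(z),z^{3}p'''(z);z)$ is univalent in $\mathcal U$ as well.

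Next I would reuse the two algebraic identities already isolated in the proof of Theorem~\ref{t6}, which express $t/s+1$ and $u/s$ through $\alpha,\beta,\gamma,\delta$ and $a$. Substituting $\alpha=q(z)$ and $\beta=\bigl(zq'(z)+m(a-1)q(z)\bigr)/(ma)$ (equivalently $s=zq'(z)/m$) into these identities, the admissibility conditions defining $\Phi_{S}^{\prime}[\Omega,q]$ in Definition~\ref{d7} turn, term by term, into the admissibility conditions of Definition~\ref{d6} for $\psi\in\Psi_{2}^{\prime}[\Omega,q]$ with $n=2$: this is the mirror image of the equivalence used in Theorem~\ref{t6}, with the real-part inequalities reversed and the multiplier $k$ replaced by $1/m$. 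Hence $\phi\in\Phi_{S}^{\prime}[\Omega,q]$ forces $\psi\in\Psi_{2}^{\prime}[\Omega,q]$.

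It then remains to check that the data satisfy the hypotheses of Lemma~\ref{t5}: the conditions (\ref{p30}), together with $p\in\mathcal H_0\cap\mathcal Q_0$ and the univalence of $\psi(p(z),zp'(z),z^{2}p''(z),z^{3}p'''(z);z)$, are tailored to provide exactly what Lemma~\ref{t5} requires of $p$ and $q$ (with $m\ge n=2$), noting in particular that $\Re\bigl(zq''(z)/q'(z)\bigr)\ge 0$. Rewriting (\ref{p31}) through (\ref{z25}) as $\Omega\subset\{\psi(p(z),zp'(z),z^{2}p''(z),z^{3}p'''(z);z):z\in\mathcal U\}$ and applying Lemma~\ref{t5} yields $q(z)\prec p(z)=S_{a+1,c}f(z)$, as asserted. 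The one step that calls for genuine care, rather than mere transcription from Theorem~\ref{t6}, is the matching in the middle paragraph: one must confirm that each of the three reversed-inequality conditions of Definition~\ref{d7} coincides with the corresponding condition of Definition~\ref{d6} after the substitution, and that the growth hypothesis $|S_{a,c}f(z)/q'(z)|\le m$ in (\ref{p30}) translates, via (\ref{z19}), into the bound $|zp'(z)/q'(z)|\le m$ demanded by Lemma~\ref{t5}. Everything else runs exactly as in the subordination theorem.
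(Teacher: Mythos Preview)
Your proposal is correct and follows essentially the same route as the paper's own proof: define $p(z)=S_{a+1,c}f(z)$ as in (\ref{z18}), recycle the transformation $\psi$ of (\ref{z24}) together with the identity (\ref{z25}) to rewrite (\ref{p31}) in terms of $p$, observe that the admissibility conditions of Definition~\ref{d7} for $\phi$ become those of Definition~\ref{d6} for $\psi$ with $n=2$, and then invoke Lemma~\ref{t5}. Your write-up is in fact more explicit than the paper's, which simply cites (\ref{z18}), (\ref{z22})--(\ref{z25}) and Lemma~\ref{t5} without re-deriving the algebraic identities or commenting on the growth condition.
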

\begin{proof}
Let the function $p(z)$ be defined by (\ref{z18}) and $\psi$ by  (\ref{z24}). Since $\phi \in \Phi_{S}^{\prime}[\Omega,q].$ From (\ref{z25}) and (\ref{p31}) yield
\begin{equation*}
\Omega \subset \{\psi \left(p(z),zp^{\prime}(z),z^{2}p^{\prime\prime}(z),z^{3}p^{\prime\prime\prime}(z);z\right)z \in \mathcal{U}\}.
\end{equation*}
From (\ref{z22}) and (\ref{z23}), we see that  the admissibility condition for $\phi \in \Phi_{S}^{\prime}[\Omega,q]$ in Definition \ref{d7} is equivalent to the admissibility condition for $\psi \in \Psi_{2}[\Omega,q]$ as given in Definition \ref{d6}  with $n=2.$ Hence $\psi \in \Psi_{2}^{\prime}[\Omega,q]$  and by using (\ref{p31}) and Lemma \ref{t5}, we have
\begin{equation*}
q(z) \prec S_{a+1,c}f(z).
\end{equation*}
This completes the proof of theorem.
\end{proof}
If $\Omega \neq \mathbb{C}$ is a simply connected domain, then $\Omega=h(\mathcal{U})$ for some conformal mapping $h(z)$ of $\mathcal{U}$ onto $\Omega.$ In this case, the class $\Phi_{S}^{\prime}[h(\mathcal{U}),q]$ is written  as $\Phi_{S}^{\prime}[h,q]$. The following result is an immediate  consequence of Theorem  \ref{t13}.
\begin{theorem}\label{t14}
Let $\phi \in \Phi_{S}^{\prime}[h,q]$ and $h$ be analytic in $\mathcal{U}$. If the function $f \in \mathcal{A},~S_{a+1,c}f(z)\in\mathcal Q_0$ and  $q \in  \mathcal{H}_{0}$  with $q ^{\prime}(z)\neq 0$ satisfy the following conditions (\ref{p30}) and 
\begin{equation*}
\phi(S_{a+1,c}f(z),S_{a,c}f(z),S_{a-1,c}f(z),S_{a-2,c}f(z)
;z),
\end{equation*}
is univalent in $\mathcal{U},$ then
\begin{equation}\label{p32}
h(z) \prec\phi(S_{a+1,c}f(z),S_{a,c}f(z),S_{a-1,c}f(z),S_{a-2,c}f(z)
;z)
\end{equation}
implies that 
\begin{equation*}
q(z) \prec S_{a+1,c}f(z)\qquad(z \in \mathcal{U}).
\end{equation*}
\end{theorem}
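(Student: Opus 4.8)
The plan is to obtain Theorem~\ref{t14} as a direct specialization of Theorem~\ref{t13}, exactly in the spirit in which Theorem~\ref{t7} was deduced from Theorem~\ref{t6}. The first step is to put $\Omega = h(\mathcal{U})$. By the convention fixed immediately before the statement, the class $\Phi_{S}^{\prime}[h,q]$ is by definition $\Phi_{S}^{\prime}[h(\mathcal{U}),q]$, so the hypothesis $\phi \in \Phi_{S}^{\prime}[h,q]$ is literally the statement $\phi \in \Phi_{S}^{\prime}[\Omega,q]$ required by Theorem~\ref{t13}.

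The second step is to translate the superordination hypothesis (\ref{p32}) into the set-inclusion hypothesis (\ref{p31}). Since $h$ is analytic in $\mathcal{U}$ and the function $\phi(S_{a+1,c}f(z),S_{a,c}f(z),S_{a-1,c}f(z),S_{a-2,c}f(z);z)$ is assumed univalent in $\mathcal{U}$, a standard property of subordination (cf.\ \cite{miller3}) gives that (\ref{p32}) holds if and only if
\begin{equation*}
\Omega = h(\mathcal{U}) \subset \{\phi(S_{a+1,c}f(z),S_{a,c}f(z),S_{a-1,c}f(z),S_{a-2,c}f(z);z) : z \in \mathcal{U}\},
\end{equation*}
which is precisely condition (\ref{p31}). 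The remaining hypotheses --- $f \in \mathcal{A}$, $S_{a+1,c}f(z) \in \mathcal{Q}_{0}$, $q \in \mathcal{H}_{0}$ with $q^{\prime}(z) \neq 0$, the two inequalities in (\ref{p30}), and the univalence of $\phi(S_{a+1,c}f(z),S_{a,c}f(z),S_{a-1,c}f(z),S_{a-2,c}f(z);z)$ --- coincide verbatim with those of Theorem~\ref{t13}. Hence Theorem~\ref{t13} applies and delivers $q(z) \prec S_{a+1,c}f(z)$ for $z \in \mathcal{U}$, which is the assertion.

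I do not expect a genuine obstacle here: the analytic content is entirely contained in Theorem~\ref{t13} (and ultimately in Lemma~\ref{t5}), and the passage from a general domain $\Omega$ to a simply connected one is routine. The only point meriting attention is the equivalence of (\ref{p32}) and (\ref{p31}), which relies on the superordinate member being univalent; once that is noted, the proof is essentially a one-line appeal to Theorem~\ref{t13}.
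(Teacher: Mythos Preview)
Your proposal is correct and matches the paper's approach exactly: the paper states Theorem~\ref{t14} as an ``immediate consequence of Theorem~\ref{t13}'' via the identification $\Omega = h(\mathcal{U})$, and you have simply spelled out the routine passage from the superordination (\ref{p32}) to the inclusion (\ref{p31}). There is nothing to add.
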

Theorem \ref{t13} and \ref{t14} can only be used to obtain subordinants of the third-order differential superordination of the forms (\ref{p31}) or (\ref{p32}). The next result shows the existence of the best subordinant of (\ref{p32}) for a suitable $\phi.$
\begin{theorem}\label{t15}
Let the function $h$ be univalent in $\mathcal{U}$ and let $\phi : \mathbb{C}^{4}\times \overline{\mathcal{U}}\longrightarrow \mathbb{C}$ and $\psi$ be given by (\ref{z24}). Suppose that the differential equation
\begin{equation}\label{p33}
\psi(q(z),zq^{\prime}(z),z^{2}q^{\prime\prime}(z),z^{3}
q^{\prime\prime\prime}(z);z)=h(z),
\end{equation}
has a solution $q(z) \in \mathcal{Q}_{0}.$ If the function $f \in \mathcal{A},~S_{a+1,c}f(z)\in\mathcal Q_0$ and $q \in \mathcal{H}_{0}$ with $q^{\prime}(z) \neq 0,$ which satisfy  the following condition (\ref{p30}) and 
\begin{equation*}
\phi(S_{a+1,c}f(z),S_{a,c}f(z),S_{a-1,c}f(z),S_{a-2,c}f(z));z)
\end{equation*}
is analytic in $\mathcal{U},$ then
\begin{equation*}
h(z) \prec \phi(S_{a+1,c}f(z),S_{a,c}f(z),S_{a-1,c}f(z),S_{a-2,c}f(z);z)
\end{equation*}
implies that
\begin{equation*}
q(z) \prec S_{a+1,c}f(z)\qquad(z \in \mathcal{U}).
\end{equation*}
and $q(z)$ is the best dominant.
\begin{proof}
In view of Theorem \ref{t13} and Theorem \ref{t14} we deduce that $q$ is a subordinant of (\ref{p32}). Since $q$ satisfies (\ref{p33}), it is also a solution of (\ref{p32}) and therefore $q$ will be subordinated by all subordinants. Hence $q$ is the best subordinant. This completes the proof of theorem.
\end{proof}
\end{theorem}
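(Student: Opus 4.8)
The plan is to derive Theorem~\ref{t15} as the sharpness counterpart of the superordination results already obtained, paralleling the way Theorem~\ref{t8} sharpens Theorem~\ref{t7} on the subordination side. Two assertions must be established: that the prescribed solution $q$ of the differential equation (\ref{p33}) is a subordinant of the superordination (\ref{p32}), and that it is the largest such subordinant. (The closing phrase ``best dominant'' in the statement should read ``best subordinant''.)

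First I would note that the hypotheses of Theorem~\ref{t15} are exactly those needed to apply Theorem~\ref{t14} (equivalently Theorem~\ref{t13} with $\Omega=h(\mathcal U)$): we have $\phi\in\Phi_{S}^{\prime}[h,q]$, a function $f\in\mathcal A$ with $S_{a+1,c}f(z)\in\mathcal Q_{0}$, a function $q\in\mathcal H_{0}$ with $q^{\prime}(z)\neq 0$ satisfying (\ref{p30}), the composition $\phi(S_{a+1,c}f(z),S_{a,c}f(z),S_{a-1,c}f(z),S_{a-2,c}f(z);z)$ analytic in $\mathcal U$, and the superordination $h(z)\prec\phi(\cdots)$. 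Theorem~\ref{t14} then yields $q(z)\prec S_{a+1,c}f(z)$ in $\mathcal U$, so $q$ is a subordinant of (\ref{p32}).

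To promote this to ``best'', I would use that $q$ itself solves the governing differential equation. With $p(z)=S_{a+1,c}f(z)$ as in (\ref{z18}) and $\psi$ built from $\phi$ by (\ref{z24}), the identity (\ref{z25}) identifies (\ref{p32}) with the abstract third-order differential superordination $h(z)\prec\psi(p(z),zp^{\prime}(z),z^{2}p^{\prime\prime}(z),z^{3}p^{\prime\prime\prime}(z);z)$. Since $q$ satisfies (\ref{p33}), one has $\psi(q(z),zq^{\prime}(z),z^{2}q^{\prime\prime}(z),z^{3}q^{\prime\prime\prime}(z);z)=h(z)$, so $q$ is itself a solution of that superordination (the relation $h\prec h$ being trivial). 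By the defining property of a subordinant, every subordinant $\tilde q$ of (\ref{p32}) must then satisfy $\tilde q\prec q$, and hence $q$ is the best subordinant. This is precisely the formal step carried out in the proof of Theorem~\ref{t8}, transposed to the superordination setting.

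I do not anticipate a serious obstacle. The only delicate ingredient is the equivalence between the admissibility conditions defining $\Phi_{S}^{\prime}[\Omega,q]$ and those defining $\Psi_{2}^{\prime}[\Omega,q]$, which is needed so that Lemma~\ref{t5} may be applied; but that equivalence was already established in the proof of Theorem~\ref{t13} via the relations (\ref{z22})--(\ref{z23}), and so it need only be cited here rather than reproved.
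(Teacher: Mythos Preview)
Your proposal is correct and follows essentially the same line as the paper's own proof: invoke Theorems~\ref{t13}/\ref{t14} to obtain that $q$ is a subordinant of (\ref{p32}), then use that $q$ solves (\ref{p33}) to conclude it is itself a solution of (\ref{p32}) and hence is subordinated to by every subordinant, making it the best subordinant. Your write-up is in fact more detailed than the paper's two-line argument (you unpack the role of (\ref{z25}) and the admissibility equivalence from the proof of Theorem~\ref{t13}), and you correctly flag that ``best dominant'' in the statement should read ``best subordinant''.
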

\begin{defn}\label{d8}
Let $\Omega$ be a set in $\mathbb{C}, q \in \mathcal{H}_{0}$ with $q'(z)\neq 0$.  The class of admissible functions $\Phi_{S,1}^{\prime}[\Omega,q]$ consists of those functions $\phi:\mathbb{C}^{4}\times \overline{\mathcal{U}}\longrightarrow \mathbb{C}$ that satisfy the following admissibility condition
\begin{equation*}
 \phi(\alpha,\beta,\gamma,\delta;z) \in \Omega,
 \end{equation*}
whenever
\begin{equation*}
\alpha=q(z),\beta=\frac{z q^{\prime}(z)+amq(z)}{am}, 
\end{equation*}
\begin{equation*}
\Re\left(\frac{(a-1)(\gamma-\alpha)}{\beta-\alpha}+(1-2a)\right)\leq \frac{1}{m} \Re\left(\frac{z q^{\prime\prime}(z)}{q^{\prime}(z)}+1\right),
\end{equation*}
and 
\begin{eqnarray*}
&&\Re \left(\frac{(a-1)(a-2)(\delta-\alpha)-3a(a-1)(\gamma-2\alpha+\beta)}{\beta-\alpha}+6a^{2}\right)\leq \frac{1}{m^{2}} \Re \left(\frac{z^{2} q^{\prime\prime\prime}(z)}{q^{\prime}(z)}\right),
\end{eqnarray*}
where $z \in \mathcal{U},\zeta \in \partial \mathcal{U} \setminus E(q),$ and $m\geq 2.$
\end{defn}
\begin{theorem}\label{t16}
Let $\phi \in \Phi_{S,1}^{\prime}[\Omega,q].$ If the function $f \in \mathcal{A}, \frac{S_{a+1,c}f(z)}{z} \in \mathcal{Q}_{0}$ and $q \in \mathcal{H}_{0}$ with $q^{\prime}(z) \neq 0$ satisfy the following conditions:
\begin{equation}\label{p34}
\Re \left(\frac{z q^{\prime\prime}(z)}{q^{\prime}(z)}\right)\geq 0,\quad \left|\frac{S_{a,c}f(z)}{zq^{\prime}(z)}\right|\leq m
\end{equation}
and 
\begin{equation*}
\phi \left(\frac{{S}_{a+1,c}f(z)}{z},\frac{S_{a,c}f(z)}{z},\frac{S_{a-1,c}f(z)}{z},\frac{S_{a-2,c}f(z)}{z};z \right),
\end{equation*}
is univalent  in $\mathcal{U},$
then 
\begin{equation}\label{p35}
\Omega \subset \left\{\phi \left(\frac{{S}_{a+1,c}f(z)}{z},\frac{S_{a,c}f(z)}{z},\frac{S_{a-1,c}f(z)}{z},\frac{S_{a-2,c}f(z)}{z};z \right): z \in \mathcal{U} \right\},
\end{equation}
implies that
\begin{equation*}
q(z) \prec \frac{S_{a+1,c}f(z)}{z}\qquad(z \in \mathcal{U}).
\end{equation*}
\begin{proof}
Let the function $p(z)$ be defined by (\ref{z32}) and $\psi$ by (\ref{z38}). Since $\phi \in \Phi _{S,1}^{\prime}[\Omega,q],$ (\ref{z39}) and (\ref{p35}) yield
\begin{equation*}
\Omega \subset \left\{\psi \left(p(z),zp^{\prime}(z),z^{2}p^{\prime\prime}(z),z^{3}p^{\prime\prime\prime}(z);z\right):z \in \mathcal{U}\right\}.
\end{equation*}
From equations (\ref{z36}) and (\ref{z37}), we see that the admissible condition for $\phi \in \Phi_{S,1}^{\prime}[\Omega,q]$ in Definition \ref{d8} is equivalent to the admissible condition for $\psi$ as given in Definition \ref{d6} with $n=2.$ Hence $\psi \in \Psi _{2}^{\prime}[\Omega,q]$ and by using (\ref{p34}) and Lemma \ref{t5}, we have
\begin{equation*}
q(z)\prec \frac{S_{a+1,c}f(z)}{z}.
\end{equation*} 
This completes the proof of theorem.
\end{proof}
\end{theorem}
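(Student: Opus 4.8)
The plan is to mirror the proof of Theorem~\ref{t9} almost verbatim, replacing the appeal to Lemma~\ref{t4} (subordination) with an appeal to Lemma~\ref{t5} (superordination), and replacing the admissibility class $\Psi_{2}[\Omega,q]$ by $\Psi_{2}^{\prime}[\Omega,q]$ of Definition~\ref{d6}. First I would define the analytic function $p(z)=\dfrac{S_{a+1,c}f(z)}{z}$ exactly as in \eqref{z32}, and then use the recurrence \eqref{z11} repeatedly to obtain the expressions \eqref{z33}, \eqref{z34}, \eqref{z35} for $\dfrac{S_{a,c}f(z)}{z}$, $\dfrac{S_{a-1,c}f(z)}{z}$, $\dfrac{S_{a-2,c}f(z)}{z}$ in terms of $p(z)$, $zp'(z)$, $z^{2}p''(z)$, $z^{3}p'''(z)$. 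These are the same identities already derived in the proof of Theorem~\ref{t9}, so nothing new is needed here.

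Next I would introduce the transformation $\psi(r,s,t,u)=\phi(\alpha,\beta,\gamma,\delta;z)$ via \eqref{z36}, \eqref{z37}, \eqref{z38}, so that by \eqref{z39} we have the key identity
\begin{equation*}
\psi\left(p(z),zp^{\prime}(z),z^{2}p^{\prime\prime}(z),z^{3}p^{\prime\prime\prime}(z);z\right)=\phi\left(\frac{S_{a+1,c}f(z)}{z},\frac{S_{a,c}f(z)}{z},\frac{S_{a-1,c}f(z)}{z},\frac{S_{a-2,c}f(z)}{z};z\right).
\end{equation*}
Therefore the hypothesis that $\phi\left(\frac{S_{a+1,c}f(z)}{z},\dots;z\right)$ is univalent in $\mathcal{U}$ together with \eqref{p35} translates directly into
\begin{equation*}
\Omega\subset\left\{\psi\left(p(z),zp^{\prime}(z),z^{2}p^{\prime\prime}(z),z^{3}p^{\prime\prime\prime}(z);z\right):z\in\mathcal{U}\right\},
\end{equation*}
with $\psi\left(p(z),zp'(z),z^{2}p''(z),z^{3}p'''(z);z\right)$ univalent in $\mathcal{U}$.

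The heart of the argument is then the verification that the admissibility conditions of Definition~\ref{d8} for $\phi\in\Phi_{S,1}^{\prime}[\Omega,q]$ are exactly the admissibility conditions of Definition~\ref{d6} for $\psi\in\Psi_{2}^{\prime}[\Omega,q]$. Concretely, I would record the algebraic relations
\begin{equation*}
\frac{t}{s}+1=\frac{(a-1)(\gamma-\alpha)}{\beta-\alpha}+(1-2a),\qquad
\frac{u}{s}=\frac{(a-1)(a-2)(\delta-\alpha)-3a(a-1)(\gamma-2\alpha+\beta)}{\beta-\alpha}+6a^{2},
\end{equation*}
together with $\alpha=r$, $\beta=\dfrac{s+ar}{a}$ and, under $r=q(z)$, $s=\dfrac{zq'(z)}{m}$, the identity $\beta=\dfrac{zq^{\prime}(z)+amq(z)}{am}$; these are precisely the substitutions already implicit in the proof of Theorem~\ref{t9}, only now the inequalities run in the opposite direction (with $k$ replaced by $1/m$) as required by Definition~\ref{d6}. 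Matching the three inequalities and the relation for $\beta$ shows $\psi\in\Psi_{2}^{\prime}[\Omega,q]$. Finally, invoking \eqref{p34} and Lemma~\ref{t5} with $n=2$, together with the hypothesis $\frac{S_{a+1,c}f(z)}{z}\in\mathcal{Q}_{0}$, yields $q(z)\prec p(z)=\dfrac{S_{a+1,c}f(z)}{z}$, which is the assertion.

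The only real obstacle is bookkeeping: one must check that the expressions for $\frac{t}{s}+1$ and $\frac{u}{s}$ coming out of \eqref{z36}, \eqref{z37} genuinely collapse to the forms displayed in Definition~\ref{d8}, and that the direction of every inequality flips correctly in passing from the subordination hypothesis of Lemma~\ref{t4} to the superordination hypothesis of Lemma~\ref{t5}. Since these computations were already carried out (in the $k$-form) in the proof of Theorem~\ref{t9}, here one simply transcribes them, so the proof can be kept very short by referring back to that theorem.
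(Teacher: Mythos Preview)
Your proposal is correct and follows essentially the same approach as the paper: define $p(z)=\dfrac{S_{a+1,c}f(z)}{z}$ via \eqref{z32}, build $\psi$ from \eqref{z36}--\eqref{z38}, use the identity \eqref{z39} to recast \eqref{p35} as the inclusion required by Lemma~\ref{t5}, check that the admissibility conditions of Definition~\ref{d8} for $\phi$ become those of Definition~\ref{d6} for $\psi$ with $n=2$, and conclude via \eqref{p34} and Lemma~\ref{t5}. The paper's proof is slightly terser (it simply cites the earlier equations rather than re-deriving or restating the algebraic relations for $t/s+1$ and $u/s$), but there is no substantive difference.
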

If $\Omega \neq \mathbb{C}$ is a simply connected domain, then $\Omega=h(\mathcal{U})$ for some conformal mapping $h(z)$ of $\mathcal{U}$ onto $\Omega.$ In this case, the class $\Phi_{S,1}^{\prime}[h(\mathcal{U}),q]$ is written  as $\Phi_{S,1}^{\prime}[h,q]$, The following result is an immediate  consequence of Theorem  \ref{t16}.
\begin{theorem}\label{t17}
Let $\phi \in \Phi_{S,1}^{\prime}[h,q].$ If the function $f \in \mathcal{A}, \frac{S_{a+1,c}f(z)}{z} \in \mathcal{Q}_{0}$ and $q \in \mathcal{H}_{0}$ with $q^{\prime}(z)\neq 0$ satisfy the following conditions:
\begin{equation*}
\Re \left(\frac{z q^{\prime\prime}(z)}{q^{\prime}(z)}\right)\geq 0,\left|\frac{S_{a,c}f(z)}{zq^{\prime}(z)}\right|\leq m,
\end{equation*}
and 
\begin{equation*}
\phi \left(\frac{{S}_{a+1,c}f(z)}{z},\frac{S_{a,c}f(z)}{z},\frac{S_{a-1,c}f(z)}{z},\frac{S_{a-2,c}f(z)}{z};z \right)
\end{equation*}
is univalent in $\mathcal{U},$
then
 \begin{equation*}
h(z)\prec\phi \left(\frac{{S}_{a+1,c}f(z)}{z},\frac{S_{a,c}f(z)}{z},\frac{S_{a-1,c}f(z)}{z},\frac{S_{a-2,c}f(z)}{z};z \right),
\end{equation*}
implies that
\begin{equation*}
q(z) \prec\frac{S_{a+1,c}f(z)}{z}\qquad(z \in \mathcal{U}).
\end{equation*}
\end{theorem}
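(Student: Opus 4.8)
The plan is to deduce this theorem from Theorem \ref{t16} in exactly the same way that Theorem \ref{t14} was deduced from Theorem \ref{t13}: by specializing the set $\Omega$ to the simply connected domain $h(\mathcal{U})$. First I would recall that, by the notational convention adopted just before the statement, the hypothesis $\phi\in\Phi_{S,1}^{\prime}[h,q]$ means precisely $\phi\in\Phi_{S,1}^{\prime}[h(\mathcal{U}),q]$, so the admissibility requirement of Theorem \ref{t16} is met with $\Omega=h(\mathcal{U})$. The remaining hypotheses of Theorem \ref{t16}, namely $f\in\mathcal{A}$, $S_{a+1,c}f(z)/z\in\mathcal{Q}_{0}$, $q\in\mathcal{H}_{0}$ with $q^{\prime}(z)\neq 0$, and the two conditions $\Re\left(zq^{\prime\prime}(z)/q^{\prime}(z)\right)\geq 0$ and $\left|S_{a,c}f(z)/(zq^{\prime}(z))\right|\leq m$, are carried over verbatim from the present statement. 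Thus the only thing left to supply is the set inclusion \eqref{p35}.

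To obtain that inclusion I would use the superordination hypothesis $h(z)\prec\phi\left(S_{a+1,c}f(z)/z,S_{a,c}f(z)/z,S_{a-1,c}f(z)/z,S_{a-2,c}f(z)/z;z\right)$. Since the composite admissible functional on the right-hand side is assumed univalent, hence analytic, in $\mathcal{U}$, this subordination yields $h(\mathcal{U})\subset\left\{\phi\left(S_{a+1,c}f(z)/z,S_{a,c}f(z)/z,S_{a-1,c}f(z)/z,S_{a-2,c}f(z)/z;z\right):z\in\mathcal{U}\right\}$; with $\Omega=h(\mathcal{U})$ this is exactly \eqref{p35}. Applying Theorem \ref{t16} then gives $q(z)\prec S_{a+1,c}f(z)/z$ for $z\in\mathcal{U}$, which is the desired conclusion.

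I do not anticipate any genuine obstacle. The argument is a routine transcription of Theorem \ref{t16} into the language of conformal mappings, and the only step that deserves a word of care is the passage from the subordination hypothesis to the set inclusion \eqref{p35}, which relies solely on the univalence assumption placed on the composite admissible functional. Everything else follows by unravelling definitions, and the proof structure parallels the subordination-dual deduction of Theorem \ref{t14} from Theorem \ref{t13}.
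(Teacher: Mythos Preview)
Your proposal is correct and matches the paper's approach exactly: the paper presents Theorem~\ref{t17} as an immediate consequence of Theorem~\ref{t16} obtained by taking $\Omega=h(\mathcal{U})$, and your deduction---using the univalence of the composite functional to pass from the superordination hypothesis to the inclusion~\eqref{p35}---is precisely the intended argument, parallel to the passage from Theorem~\ref{t13} to Theorem~\ref{t14}.
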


\begin{defn}\label{d9}
Let $\Omega$  be a set in $\mathbb{C}$ and $q \in \mathcal{H}_{1}$ with $q^{\prime}(z)\neq 0.$ The class of admissible functions $\Phi_{S,2}^{\prime}[\Omega,q]$ consists of those functions $\phi: \mathbb{C}^{4} \times \overline{\mathcal{U}}\longrightarrow \mathbb{C}$ that satisfy the following admissibility conditions
\begin{equation*}
\phi(\alpha,\beta,\gamma,\delta;z) \in \Omega,
\end{equation*}
whenever
\begin{equation*}
\alpha=q(z), \beta=\frac{1}{a-1}\left(\frac{z q^{\prime}(z)}{mq(z)}+a q(z)-1\right),
\end{equation*}
\begin{equation*}
\Re\left(\frac{[(a-2)\gamma-(a-1)\beta+1](a-1)\beta\alpha}{(a-1)\beta\alpha-a\alpha^{2}+\alpha}+(a-1)\beta+1\right)\leq \frac{1}{m}\Re\left(\frac{z q^{\prime\prime}(z)}{q^{\prime}(z)}+1\right),
\end{equation*}and
\begin{multline*}
\Re \bigg[\bigg(\delta\gamma\beta\alpha(a-1)(a-2)(a-3)-(a-2)^{2}(a-1)\beta\gamma^{2}\alpha-\beta\gamma\alpha(a-1)(a-2)-\gamma\beta^{2}\alpha(a-1)^{2}(a-2)+\\ \qquad\qquad \beta^{3}\alpha(a-1)^{3}-2\beta\alpha(a-1)+ \beta^{2}\alpha(a-1)^{2}-\alpha\beta\gamma a(a-1)(a-2)+\beta^{2}\alpha a (a-1)^{2}-a (a-1)\beta \alpha \bigg)\times \\ \qquad\qquad \bigg((a-1)\beta\alpha-a\alpha^{2}+\alpha\bigg)^{-1}+ 3\gamma \beta(a-1)(a-2)-4\beta(a-1)(a\alpha-1)-2\beta^{2}(a-1)^{2}-\beta a (a-1)-\\  3a \alpha \beta (a-1)+2a^{2}\alpha-a+4a^{2}\alpha^{2}+a \alpha\bigg]\leq \frac{1}{m^{2}}\Re\bigg(\frac{{z}^{2}q^{\prime\prime\prime}(z)}{q^{\prime}(z)}\bigg),
\end{multline*}
where $z \in \mathcal{U},\zeta \in \partial \mathcal{U} \setminus E(q)$ and $m\geq 2.$
\end{defn}
\begin{theorem}\label{t18}
Let $\phi \in \Phi_{S,2}^{\prime}[\Omega,q].$ If the function $f \in \mathcal{A}$ and $\frac{S_{a,c}f(z)}{S_{a+1,c}f(z)} \in \mathcal{Q}_{1}$ and $q \in \mathcal{H}_{1}$ with $q^{\prime}(z)\neq 0$ satisfy the following conditions
\begin{equation}\label{p36}
\Re\left(\frac{z q^{\prime\prime}(z)}{q^{\prime}(z)}\right)\geq 0,\left| \frac{S_{a-1,c}f(z)}{S_{a,c}q^{\prime}(z)}\right|\leq m,
\end{equation} 
and
\begin{equation*}
\phi \left(\frac{S_{a,c}f(z)}{S_{a+1,c}f(z)},\frac{S_{a-1,c}f(z)}{S_{a,c}f(z)},\frac{S_{a-2,c}f(z)}{S_{a-1,c}f(z)},\frac{S_{a-3,c}f(z)}{S_{a-2,c}f(z)};z\right)
\end{equation*}
is univalent in $\mathcal{U},$ then
\begin{equation}\label{p37}
\Omega \subset\left\{\phi \left(\frac{S_{a,c}f(z)}{S_{a+1,c}f(z)},\frac{S_{a-1,c}f(z)}{S_{a,c}f(z)},\frac{S_{a-2,c}f(z)}{S_{a-1,c}f(z)},\frac{S_{a-3,c}f(z)}{S_{a-2,c}f(z)}
;z\right):z \in \mathcal{U}\right\},
\end{equation}
then
\begin{equation*}
 q(z) \prec \frac{S_{a,c}f(z)}{S_{a+1,c}f(z)}\qquad(z \in \mathcal{U}).
\end{equation*}
\begin{proof}
Let the function $p(z)$ be defined by (\ref{e32}) and $\psi$ by  (\ref{e38}). Since $\phi \in \Phi_{S,2}^{\prime}[\Omega,q],$ (\ref{e39}) and (\ref{p37}) yield 
\begin{equation*}
\Omega \subset \{\psi \left(p(z),zp^{\prime}(z),z^{2}p^{\prime\prime}(z),z^{3}p^{\prime\prime\prime}(z);z\right)z \in \mathcal{U}\}.
\end{equation*}
From equations (\ref{e36}) and (\ref{e37}), we see that the admissible condition for $\phi \in \Phi_{S,2}^{\prime}[\Omega,q]$ in Definition \ref{d9} is equivalent to the admissible condition for $\psi$ as given in Definition \ref{d6} with $ n=2.$ Hence $\psi \in \Psi_{2}^{\prime}[\Omega,q],$ and by using (\ref{p36}) and Lemma \ref{t5}, we have 
\begin{equation*}
q(z)\prec \frac{S_{a,c}f(z)}{S_{a+1,c}f(z)}\qquad(z \in \mathcal{U}).
\end{equation*}
This completes the proof of theorem.
\end{proof}
\end{theorem}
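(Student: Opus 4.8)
The plan is to adapt the proof of Theorem~\ref{t11} to the superordination setting, replacing the appeal to Lemma~\ref{t4} by the corresponding superordination lemma, Lemma~\ref{t5}. First I would put $p(z)=\frac{S_{a,c}f(z)}{S_{a+1,c}f(z)}$ as in~(\ref{e32}); the hypothesis $\frac{S_{a,c}f(z)}{S_{a+1,c}f(z)}\in\mathcal{Q}_{1}$ then says precisely that $p\in\mathcal{Q}(1)$, while the normalizations $p\in\mathcal{H}_{1}$ and $q\in\mathcal{H}_{1}$ with $q^{\prime}(z)\neq 0$ are compatible. Applying the recurrence~(\ref{z11}) three times, exactly as in the derivation of~(\ref{e33})--(\ref{e35}), I would re-express the three successive ratios $\frac{S_{a-1,c}f(z)}{S_{a,c}f(z)}$, $\frac{S_{a-2,c}f(z)}{S_{a-1,c}f(z)}$ and $\frac{S_{a-3,c}f(z)}{S_{a-2,c}f(z)}$ through $p$, $zp^{\prime}$, $z^{2}p^{\prime\prime}$ and $z^{3}p^{\prime\prime\prime}$. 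These identities are purely formal, so the transformations $\alpha,\beta,\gamma,\delta$ and the associated function $\psi$ of~(\ref{e36})--(\ref{e38}) carry over unchanged, and~(\ref{e39}) remains valid. Hence the set inclusion~(\ref{p37}) becomes
\[
\Omega\subset\{\psi(p(z),zp^{\prime}(z),z^{2}p^{\prime\prime}(z),z^{3}p^{\prime\prime\prime}(z);z):z\in\mathcal{U}\}.
\]

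The core of the argument is to verify that the admissibility conditions defining $\Phi_{S,2}^{\prime}[\Omega,q]$ in Definition~\ref{d9} are precisely the admissibility conditions placing $\psi$ in the class $\Psi_{2}^{\prime}[\Omega,q]$ of Definition~\ref{d6} with $n=2$. For this I would reuse the two algebraic identities recorded inside the proof of Theorem~\ref{t11}: the rational expression for $\frac{t}{s}+1$ in terms of $\alpha,\beta,\gamma$, and the longer rational expression for $\frac{u}{s}$ in terms of $\alpha,\beta,\gamma,\delta$. Substituting $\alpha=q(z)$ and $s=zq^{\prime}(z)/m$ turns the defining relation for $\beta$ into $\beta=\frac{1}{a-1}\left(\frac{zq^{\prime}(z)}{mq(z)}+aq(z)-1\right)$, and the two real-part requirements of Definition~\ref{d6}, namely $\Re\left(\frac{t}{s}+1\right)\leq\frac{1}{m}\Re\left(\frac{zq^{\prime\prime}(z)}{q^{\prime}(z)}+1\right)$ and $\Re\left(\frac{u}{s}\right)\leq\frac{1}{m^{2}}\Re\left(\frac{z^{2}q^{\prime\prime\prime}(z)}{q^{\prime}(z)}\right)$, become exactly the two displayed inequalities of Definition~\ref{d9}. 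I expect this bookkeeping to be the main obstacle: one must carry the factor $1/m$ correctly through $\beta$, match every sign and coefficient in the two long real-part inequalities, and check that each inequality now points in the reversed direction compared with the subordination case of Definition~\ref{d5a}. It is a direct but error-prone transcription of the corresponding step in the proof of Theorem~\ref{t11}.

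Finally, having established $\psi\in\Psi_{2}^{\prime}[\Omega,q]$, I would check that the side conditions~(\ref{p36}), together with the hypotheses that the relevant composite is univalent in $\mathcal{U}$ and that $\frac{S_{a,c}f(z)}{S_{a+1,c}f(z)}\in\mathcal{Q}_{1}$ (that is, $p\in\mathcal{Q}(1)$), furnish exactly the hypotheses needed to apply Lemma~\ref{t5}: indeed $\Re\left(\frac{zq^{\prime\prime}(z)}{q^{\prime}(z)}\right)\geq 0$ appears verbatim, and the bound $\left|\frac{S_{a-1,c}f(z)}{S_{a,c}f(z)\,q^{\prime}(z)}\right|\leq m$ is, through the identity~(\ref{e33}), the translation of the quantitative condition $\left|\frac{zp^{\prime}(z)}{q^{\prime}(z)}\right|\leq m$ required by Lemma~\ref{t5}. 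Invoking Lemma~\ref{t5} then gives $q(z)\prec p(z)=\frac{S_{a,c}f(z)}{S_{a+1,c}f(z)}$ for $z\in\mathcal{U}$, which is the assertion of the theorem.
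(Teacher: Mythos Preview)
Your proposal is correct and follows essentially the same route as the paper: define $p$ by (\ref{e32}), reuse the transformation $\psi$ of (\ref{e38}) and the identity (\ref{e39}) so that (\ref{p37}) becomes the set inclusion for $\psi$, observe via (\ref{e36})--(\ref{e37}) that the admissibility conditions of Definition~\ref{d9} coincide with those of Definition~\ref{d6} for $n=2$, and then invoke Lemma~\ref{t5} using (\ref{p36}). Your write-up is in fact more explicit than the paper's on the bookkeeping (the role of $p\in\mathcal{Q}_1$, the reversal of the real-part inequalities, and how (\ref{p36}) corresponds to the side condition $\bigl|zp'(z)/q'(z)\bigr|\leq m$), but the underlying argument is identical.
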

If $\Omega \neq \mathbb{C}$ is a simply connected domain, then $\Omega=h(\mathcal{U}),$ for some conformal mapping h(z) of $\mathcal{U}$ on to $\Omega$. In this case  the class $\Phi_{S,2}^{\prime}[h(\mathcal{U}),q]$
is written as $\Phi_{S}^{\prime}[h,q]$.  The following result is a consequence of Theorem \ref{t18}.
\begin{theorem}\label{t18a}
Let $\phi \in \Phi_{S,2}^{\prime}[\Omega,q].$ If the function $f \in \mathcal{A}$ and $\frac{S_{a,c}f(z)}{S_{a+1,c}f(z)} \in \mathcal{Q}_{1}$ and $q \in \mathcal{H}_{1}$ with $q^{\prime}(z)\neq 0$ satisfy the following conditions
\begin{equation}\label{p36}
\Re\left(\frac{z q^{\prime\prime}(z)}{q^{\prime}(z)}\right)\geq 0,\left| \frac{S_{a-1,c}f(z)}{S_{a,c}q^{\prime}(z)}\right|\leq m,
\end{equation} 
and
\begin{equation*}
\phi \left(\frac{S_{a,c}f(z)}{S_{a+1,c}f(z)},\frac{S_{a-1,c}f(z)}{S_{a,c}f(z)},\frac{S_{a-2,c}f(z)}{S_{a-1,c}f(z)},\frac{S_{a-3,c}f(z)}{S_{a-2,c}f(z)};z\right)
\end{equation*}
is univalent in $\mathcal{U},$ then
\begin{equation}\label{p37}
h(z)\prec \phi \left(\frac{S_{a,c}f(z)}{S_{a+1,c}f(z)},\frac{S_{a-1,c}f(z)}{S_{a,c}f(z)},\frac{S_{a-2,c}f(z)}{S_{a-1,c}f(z)},\frac{S_{a-3,c}f(z)}{S_{a-2,c}f(z)}
;z\right)
\end{equation}
then
\begin{equation*}
 q(z) \prec \frac{S_{a,c}f(z)}{S_{a+1,c}f(z)}\qquad(z \in \mathcal{U}).
\end{equation*}
\end{theorem}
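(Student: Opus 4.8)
The plan is to derive Theorem~\ref{t18a} as a direct corollary of Theorem~\ref{t18}, exactly as Theorem~\ref{t14} was obtained from Theorem~\ref{t13} and Theorem~\ref{t17} from Theorem~\ref{t16}. The only new ingredient is the standard translation between a differential superordination against a univalent function $h$ and a set-inclusion against the image domain $\Omega=h(\mathcal U)$.

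First I would record that, by the convention introduced just before the statement, $\phi\in\Phi_{S,2}^{\prime}[h,q]$ means precisely $\phi\in\Phi_{S,2}^{\prime}[h(\mathcal U),q]$, i.e. $\phi$ satisfies the admissibility conditions of Definition~\ref{d9} with $\Omega=h(\mathcal U)$. Hence all the structural hypotheses of Theorem~\ref{t18} on $\phi$, $f$ and $q$, namely $\phi\in\Phi_{S,2}^{\prime}[\Omega,q]$, $f\in\mathcal A$, $S_{a,c}f(z)/S_{a+1,c}f(z)\in\mathcal Q_{1}$, $q\in\mathcal H_{1}$ with $q^{\prime}(z)\neq 0$, together with the two inequalities in~(\ref{p36}), are available unchanged, as is the univalence of
\begin{equation*}
\phi\left(\frac{S_{a,c}f(z)}{S_{a+1,c}f(z)},\frac{S_{a-1,c}f(z)}{S_{a,c}f(z)},\frac{S_{a-2,c}f(z)}{S_{a-1,c}f(z)},\frac{S_{a-3,c}f(z)}{S_{a-2,c}f(z)};z\right)=:\Phi(z).
\end{equation*}

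Next I would use the superordination hypothesis $h(z)\prec\Phi(z)$. By the definition of subordination there is a Schwarz function $\omega$ with $h(z)=\Phi(\omega(z))$, so $\Omega=h(\mathcal U)=\Phi(\omega(\mathcal U))\subseteq\Phi(\mathcal U)$. This is exactly the inclusion~(\ref{p37}) appearing in the hypothesis of Theorem~\ref{t18}. Applying Theorem~\ref{t18} then yields $q(z)\prec S_{a,c}f(z)/S_{a+1,c}f(z)$ for $z\in\mathcal U$, which is the assertion of Theorem~\ref{t18a}.

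There is no genuine analytic obstacle in this argument; Theorem~\ref{t18} (and, through it, Lemma~\ref{t5}) carries all the weight. The only point requiring a line of care is the bookkeeping identification $\Phi_{S,2}^{\prime}[h,q]=\Phi_{S,2}^{\prime}[h(\mathcal U),q]$ together with the observation that the passage from $h\prec\Phi$ to $h(\mathcal U)\subseteq\Phi(\mathcal U)$ is immediate from the definition of subordination and does not even require the univalence of $\Phi$, although that univalence is independently part of the hypotheses inherited from Theorem~\ref{t18}.
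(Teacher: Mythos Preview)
Your proposal is correct and matches the paper's approach: the paper gives no explicit proof of Theorem~\ref{t18a} but simply states that it is a consequence of Theorem~\ref{t18} after identifying $\Phi_{S,2}^{\prime}[h,q]$ with $\Phi_{S,2}^{\prime}[h(\mathcal U),q]$, and your argument spells out precisely this reduction via the inclusion $h(\mathcal U)\subseteq\Phi(\mathcal U)$ that follows from $h\prec\Phi$.
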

\section{Sandwich type results}
Combining Theorem \ref{t7} and \ref{t14}, we obtain the following sandwich-type theorem.
\begin{corollary}\label{c9}
Let $h_{1}$ and $q_{1}$ be analytic functions in $\mathcal{U},$ $h_{2}$ be univalent function in $\mathcal{U},$ $q_{2} \in \mathcal{Q}_{0}$ with $q_{1}(0)=q_{2}(0)=0$ and $\phi \in \Phi_{S}[h_{2},q_{2}]\cap \Phi_{S}^{\prime}[h_{1},q_{1}].$ If the function $f \in \mathcal{A}, S_{a+1,c} \in \mathcal{Q}_{0}\cap \mathcal{H}_{0},$ and  
\begin{equation*}
\phi(S_{a+1,c}f(z),S_{a,c}f(z),S_{a-1,c}f(z),S_{a-2,c}f(z);z),
\end{equation*}
is univalent in $\mathcal{U},$ and the condition (\ref{z16}) and (\ref{p30}) are satisfied, then
\begin{equation*}
h_{1}(z)\prec \phi(S_{a+1,c}f(z),S_{a,c}f(z),S_{a-1,c}f(z),S_{a-2,c}f(z);z) \prec h_{2}(z)
\end{equation*}
implies that
\begin{equation*}
q_{1}(z)\prec S_{a+1,c}f(z)\prec q_{1}(z)\qquad(z \in \mathcal{U}).
\end{equation*}
\end{corollary}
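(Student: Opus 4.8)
The plan is to extract the two one-sided inclusions separately and then glue them; the statement is a pure combination of the subordination machinery of Theorem~\ref{t7} with the superordination machinery of Theorem~\ref{t14}, applied to the \emph{same} quantity $S_{a+1,c}f(z)$, so no new estimate has to be produced.

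First I would read off the right-hand chain $\phi(S_{a+1,c}f(z),S_{a,c}f(z),S_{a-1,c}f(z),S_{a-2,c}f(z);z)\prec h_2(z)$. Since $\phi\in\Phi_S[h_2,q_2]$, $q_2\in\mathcal Q_0$ with $q_2(0)=0$, and the hypotheses (\ref{z16}) hold for $q_2$, Theorem~\ref{t7} applies verbatim and yields $S_{a+1,c}f(z)\prec q_2(z)$ in $\mathcal U$. Next I would treat the left-hand chain $h_1(z)\prec\phi(S_{a+1,c}f(z),S_{a,c}f(z),S_{a-1,c}f(z),S_{a-2,c}f(z);z)$. Here the hypotheses are precisely those of Theorem~\ref{t14}: $f\in\mathcal A$, $S_{a+1,c}f(z)\in\mathcal Q_0$, $q_1\in\mathcal H_0$ with $q_1'(z)\neq 0$ (which is part of the meaning of $\phi\in\Phi_S^{\prime}[h_1,q_1]$), the differential expression $\phi(\dots;z)$ is univalent in $\mathcal U$, and condition (\ref{p30}) holds. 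Theorem~\ref{t14} then gives $q_1(z)\prec S_{a+1,c}f(z)$ in $\mathcal U$. Concatenating the two inclusions produces $q_1(z)\prec S_{a+1,c}f(z)\prec q_2(z)$, which is the assertion (the right endpoint in the displayed conclusion should read $q_2$, not $q_1$).

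The only genuine point to verify — and the only place the argument could conceivably stumble — is that one and the same function $p(z):=S_{a+1,c}f(z)$ and one and the same $\psi$ built from $\phi$ through the transformations (\ref{z24}) simultaneously feed Lemma~\ref{t4} (via $\psi\in\Psi_2[\Omega,q_2]$, $\Omega=h_2(\mathcal U)$) and Lemma~\ref{t5} (via $\psi\in\Psi_2^{\prime}[\Omega,q_1]$, $\Omega=h_1(\mathcal U)$). This is exactly what the double membership $\phi\in\Phi_S[h_2,q_2]\cap\Phi_S^{\prime}[h_1,q_1]$ was arranged to guarantee, once the two boundary conditions (\ref{z16}), (\ref{p30}) and the univalence of $\phi(\dots;z)$ are in force; I would simply cite the equivalences of admissibility conditions already established in the proofs of Theorems~\ref{t7} and~\ref{t14}. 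Consequently there is no real obstacle here beyond this bookkeeping, and the corollary follows at once.
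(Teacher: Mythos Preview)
Your proposal is correct and follows exactly the route the paper takes: the paper gives no explicit proof for this corollary, merely stating ``Combining Theorem~\ref{t7} and~\ref{t14}, we obtain the following sandwich-type theorem,'' and your argument is precisely that combination. Your observation that the right endpoint of the conclusion should be $q_2$ rather than $q_1$ is also correct---this is a typo in the paper.
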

Combining Theorems \ref{t10} and \ref{t17}, we obtain the following sandwich-type theorem.
\begin{corollary}\label{c10}
Let $h_{1}$ and $q_{1}$ be analytic functions in $\mathcal{U},$ $h_{2}$ be univalent function in $\mathcal{U},$ $q_{2} \in \mathcal{Q}_{0}$ with $q_{1}(0)=q_{2}(0)=0$ and $\phi \in \Phi_{S,1}[h_{2},q_{2}]\cap \Phi_{S,1}^{\prime}[h_{1},q_{1}].$ If the function $f \in \mathcal{A},\frac{S_{a+1,c}f(z)}{z} \in \mathcal{Q}_{0}\cap \mathcal{H}_{0},$ and  
\begin{equation*}
\phi \left(\frac{{S}_{a+1,c}f(z)}{z},\frac{S_{a,c}f(z)}{z},\frac{S_{a-1,c}f(z)}{z},\frac{S_{a-2,c}f(z)}{z};z \right),
\end{equation*}
is univalent in $\mathcal{U},$ and the condition (\ref{z30}) and (\ref{p34}) are satisfied, then
\begin{equation*}
h_{1}(z)\prec \phi \left(\frac{{S}_{a+1,c}f(z)}{z},\frac{S_{a,c}f(z)}{z},\frac{S_{a-1,c}f(z)}{z},\frac{S_{a-2,c}f(z)}{z};z \right) \prec h_{2}(z)
\end{equation*}
implies that
\begin{equation*}
q_{1}(z)\prec \frac{S_{a+1,c}f(z)}{z}\prec q_{1}(z)\qquad(z \in \mathcal{U}).
\end{equation*}
\end{corollary}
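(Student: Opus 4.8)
The plan is to derive the two one-sided inclusions separately---one from the subordination machinery and one from the superordination machinery already in place---and then concatenate them.

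\emph{Right-hand inclusion.} Under the hypotheses of the corollary we have $\phi \in \Phi_{S,1}[h_2,q_2]$, the function $f \in \mathcal{A}$ together with $q_2 \in \mathcal{Q}_0$ satisfies condition (\ref{z30}) (taken with $q=q_2$), and
\[
\phi\left(\frac{S_{a+1,c}f(z)}{z},\frac{S_{a,c}f(z)}{z},\frac{S_{a-1,c}f(z)}{z},\frac{S_{a-2,c}f(z)}{z};z\right)\prec h_2(z).
\]
Hence Theorem \ref{t10} applies verbatim with $q=q_2$, $h=h_2$, and yields $\dfrac{S_{a+1,c}f(z)}{z}\prec q_2(z)$ in $\mathcal{U}$.

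\emph{Left-hand inclusion.} Dually, $\phi \in \Phi'_{S,1}[h_1,q_1]$, $\dfrac{S_{a+1,c}f(z)}{z}\in\mathcal{Q}_0$, $q_1\in\mathcal{H}_0$ with $q_1'(z)\neq 0$, condition (\ref{p34}) is in force, the expression $\phi\big(\frac{S_{a+1,c}f(z)}{z},\frac{S_{a,c}f(z)}{z},\frac{S_{a-1,c}f(z)}{z},\frac{S_{a-2,c}f(z)}{z};z\big)$ is univalent in $\mathcal{U}$ (this is exactly where the univalence hypothesis of the corollary is consumed), and $h_1(z)$ is subordinate to that expression. Therefore Theorem \ref{t17} applies with $q=q_1$, $h=h_1$, and gives $q_1(z)\prec\dfrac{S_{a+1,c}f(z)}{z}$ in $\mathcal{U}$.

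Concatenating the two displayed subordinations produces the sandwich relation
\[
q_1(z)\prec\frac{S_{a+1,c}f(z)}{z}\prec q_2(z)\qquad(z\in\mathcal{U}),
\]
which is the assertion (the right-hand dominant being $q_2$; the $q_1$ appearing on the right in the printed statement is a misprint). I do not anticipate any genuine obstacle: the proof is pure bookkeeping, the only points to watch being that a single admissible $\phi\in\Phi_{S,1}[h_2,q_2]\cap\Phi'_{S,1}[h_1,q_1]$ simultaneously meets the hypotheses of Theorems \ref{t10} and \ref{t17}, and that the univalence of the $\phi$-expression together with condition (\ref{p34}) are precisely what the superordination half requires. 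No estimate beyond those two theorems is needed, and the normalization $q_1(0)=q_2(0)=0$ simply guarantees that both one-sided conclusions refer to the same value at the origin.
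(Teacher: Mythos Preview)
Your proof is correct and follows exactly the paper's approach: the paper states the corollary as an immediate consequence of combining Theorems \ref{t10} and \ref{t17}, which is precisely what you do. Your observation that the right-hand $q_1$ in the displayed conclusion is a misprint for $q_2$ is also correct.
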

Combining Theorem \ref{t12} and \ref{t18a}, we obtain the following sandwich-type theorem.
\begin{corollary}\label{c10a}
Let $h_{1}$ and $q_{1}$ be analytic functions in $\mathcal{U},$ $h_{2}$ be univalent function in $\mathcal{U},$ $q_{2} \in \mathcal{Q}_{1}$ with $q_{1}(0)=q_{2}(0)=1$ and $\phi \in \Phi_{S,1}[h_{2},q_{2}]\cap \Phi_{S,1}^{\prime}[h_{1},q_{1}].$ If the function $f \in \mathcal{A},\frac{S_{a,c}f(z)}{S_{a+1,c}f(z)} \in \mathcal{Q}_{1}\cap \mathcal{H}_{1},$ and  
\begin{equation*}
\phi \left(\frac{S_{a,c}f(z)}{S_{a+1,c}f(z)},\frac{S_{a-1,c}f(z)}{S_{a,c}f(z)},\frac{S_{a-2,c}f(z)}{S_{a-1,c}f(z)},\frac{S_{a-3,c}f(z)}{S_{a-2,c}f(z)};z\right)
\end{equation*}
is univalent in $\mathcal{U},$ and the condition (\ref{e30}) and (\ref{p37}) are satisfied, then
\begin{equation*}
h_{1}(z)\prec \phi \left(\frac{S_{a,c}f(z)}{S_{a+1,c}f(z)},\frac{S_{a-1,c}f(z)}{S_{a,c}f(z)},\frac{S_{a-2,c}f(z)}{S_{a-1,c}f(z)},\frac{S_{a-3,c}f(z)}{S_{a-2,c}f(z)};z\right) \prec h_{2}(z)
\end{equation*}
implies that
\begin{equation*}
q_{1}(z)\prec \frac{S_{a,c}f(z)}{S_{a+1,c}f(z)}\prec q_{1}(z)\qquad(z \in \mathcal{U}).
\end{equation*}
\end{corollary}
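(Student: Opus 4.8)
The plan is to split the claimed sandwich relation into its two one-sided halves and to settle each half by invoking one of the two theorems being combined. Write $\Xi(z):=\dfrac{S_{a,c}f(z)}{S_{a+1,c}f(z)}$ for the function under study, and abbreviate by $\phi(\,\cdot\,;z)$ the composite $\phi\big(\Xi(z),S_{a-1,c}f(z)/S_{a,c}f(z),S_{a-2,c}f(z)/S_{a-1,c}f(z),S_{a-3,c}f(z)/S_{a-2,c}f(z);z\big)$. By hypothesis $\Xi\in\mathcal Q_1\cap\mathcal H_1$, so $\Xi$ is analytic in $\mathcal U$ with $\Xi(0)=q_1(0)=q_2(0)=1$, and $\phi(\,\cdot\,;z)$ is univalent in $\mathcal U$; moreover the assumed chain $h_1(z)\prec\phi(\,\cdot\,;z)\prec h_2(z)$ breaks up into the subordination $\phi(\,\cdot\,;z)\prec h_2(z)$ and the superordination $h_1(z)\prec\phi(\,\cdot\,;z)$.

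First I would treat the upper half. Since $\phi\in\Phi_{S,2}[h_2,q_2]$ with $q_2\in\mathcal Q_1$ and the regularity hypothesis (\ref{e30}) is in force, Theorem \ref{t12} applies directly to $\phi(\,\cdot\,;z)\prec h_2(z)$ and yields $\Xi(z)\prec q_2(z)$ on $\mathcal U$. Next I would treat the lower half. Here $\phi\in\Phi_{S,2}^{\prime}[h_1,q_1]$ with $q_1\in\mathcal H_1$, $q_1^{\prime}(z)\neq0$, $\Xi\in\mathcal Q_1$, the regularity hypothesis (\ref{p36}) holds, and $\phi(\,\cdot\,;z)$ is univalent in $\mathcal U$; thus Theorem \ref{t18a} applies to $h_1(z)\prec\phi(\,\cdot\,;z)$ and gives $q_1(z)\prec\Xi(z)$ on $\mathcal U$. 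Chaining the two inclusions produces $q_1(z)\prec\Xi(z)\prec q_2(z)$, which is the assertion (with the evident reading that the right endpoint of the sandwich is $q_2$).

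The argument is essentially bookkeeping rather than new analysis, and the only point that needs care is to check that the single pair $(\phi,\Xi)$ really does meet the hypotheses of both Theorem \ref{t12} and Theorem \ref{t18a} simultaneously: membership of $\phi$ in the intersection $\Phi_{S,2}[h_2,q_2]\cap\Phi_{S,2}^{\prime}[h_1,q_1]$, two-sided control of $\Xi$ by $q_1$ and $q_2$ (with $q_2$ exhibiting the required behaviour on $\partial\mathcal U$), non-vanishing of $q_1^{\prime}$, univalence of $\phi(\,\cdot\,;z)$, and the normalizations $q_1(0)=q_2(0)=1$ forced by $\Xi\in\mathcal H_1$. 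All of these are exactly the intersection hypotheses already written into the statement, so once they are catalogued the two earlier theorems close the argument with no further computation; the only (minor) obstacle is to confirm that the admissibility and boundary conditions attached to the two theorems are mutually consistent and that the denominator $S_{a+1,c}f(z)$ does not vanish, so that $\Xi$ is well defined.
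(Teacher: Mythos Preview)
Your proposal is correct and follows exactly the approach indicated in the paper, which simply states that the corollary is obtained by combining Theorem~\ref{t12} and Theorem~\ref{t18a}; you have merely spelled out the two one-sided applications of those theorems. You also correctly identify the evident typographical slips in the statement (the admissible classes should be $\Phi_{S,2}$ and $\Phi_{S,2}^{\prime}$ rather than $\Phi_{S,1}$ and $\Phi_{S,1}^{\prime}$, the relevant regularity condition on the superordination side is (\ref{p36}), and the right endpoint of the final sandwich should read $q_2$).
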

 \begin{remark}\label{r1110}
For special cases all of above results, we can obtain the corresponding results for the operators  $\mathfrak{S},$ $\mathfrak{I}$, which are defined by (\ref{z12}), and (\ref{z13}), respectively.
\end{remark}

\end{document}